\newcommand{\TVert}[1]{{\left\vert\kern-0.25ex\left\vert\kern-0.25ex\left\vert #1 
    \right\vert\kern-0.25ex\right\vert\kern-0.25ex\right\vert}}
\newcommand\deq{\stackrel{\mathclap{\scriptsize\mbox{d}}}{=}}
\newcommand\convd{\stackrel{\mathclap{\scriptsize\mbox{d}}}{\rightarrow}}
\newcommand\convp{\stackrel{\mathclap{\scriptsize\mbox{p}}}{\rightarrow}}
\newcommand\convas{\stackrel{\mathclap{\scriptsize\mbox{a.s.}}}{\rightarrow}}
\newtheorem{theorem}{Theorem}[section]
\newtheorem{corollary}{Corollary}[theorem]
\newtheorem{lemma}[theorem]{Lemma}
\newtheorem{prop}[theorem]{Proposition}
\newtheorem{definition}[theorem]{Definition}
\newtheorem{assumption}[theorem]{Assumption}
\titlespacing*{\paragraph}{0pt}{\baselineskip}{\baselineskip}
\newcommand{\colorparagraph}[1]{%
  \colorbox{blue!20}{\parbox{\dimexpr\textwidth-2\fboxsep}{#1}}}
\newcommand{\E}{\mathbbm{E}}
\newcommand{\Tr}{\text{Tr}}
\newcommand{\Id}{\text{Id}}
\title{CLT for Linear Spectral Statistics in High-Dimensional Random Effects Models}
\author{Ran Xie}
\author{Iain Johnstone}
\affil{Department of Statistics, Stanford University}
\date{}
\newcommand{\deteqv}[1]{\tilde{#1}}
\begin{document}

\maketitle
\begin{abstract}
    We study sample covariance matrices arising from multi-level components of variance. Thus, let
    $ B_n=\frac{1}{N}\sum_{j=1}^NT_{j}^{1/2}x_jx_j^TT_{j}^{1/2}$, where $x_j\in\mathbbm{R}^n$ are i.i.d. standard Gaussian, and $T_{j}=\sum_{r=1}^kl_{jr}^2\Sigma_{r}$ are $n\times n$ real symmetric matrices with bounded spectral norm, corresponding to $k$ levels of variation. As the matrix dimensions $n$ and $N$ increase proportionally, we show that the linear spectral statistics (LSS) of $B_n$ have Gaussian limits. The CLT is expressed as the convergence of a set of LSS to a standard multivariate Gaussian after centering by a mean vector $\Gamma_n$ and a covariance matrix $\Lambda_n$ which depend on $n$ and $N$ and may be evaluated numerically. 
    Our work is motivated by the estimation of high-dimensional covariance matrices between phenotypic traits in quantitative genetics, particularly within nested linear random-effects models with up to $k$ levels of randomness. Our proof builds on the Bai-Silverstein \cite{baisilverstein2004} martingale method with some innovation to handle the multi-level setting.
\end{abstract}
\section{Introduction}
Consider a matrix 
\begin{align}\label{eq:Bn}
   B_n=\frac{1}{N}\sum_{j=1}^NT_{j}^{1/2}x_jx_j^TT_{j}^{1/2}, 
\end{align}
where $x_j\in\mathbbm{R}^n$ are i.i.d. standard Gaussian, and $T_{j}$ are $n \times n$ real symmetric matrices given by
\begin{align}\label{eq:Tn}
    T_{j}=\sum_{r=1}^kl_{jr}^2\Sigma_{r}.
\end{align}
Above, $l_{jr}$ are real scalars and $\Sigma_{r}$ represent $n\times n$ real symmetric matrices. In this paper, we aim to establish a central limit theorem for the linear spectral statistics (LSS) of matrix $B_n$. For a function $f$ on $[0,\infty)$, the corresponding LSS is defined as
\begin{align*}
    \frac{1}{n}\sum_{i=1}^nf(\lambda_i)=\int f(x)dF^{B_n}(x),
\end{align*}
where $\lambda_1,\ldots,\lambda_n$ denote the eigenvalues of $B_n$, and $F^{B_n}$ denotes the empirical spectral distribution (ESD) of $B_n$ given by
\begin{align*}
    F^{B_n}(x)=\frac{1}{n}\sum_{i=1}^n\mathbbm{1}[\lambda_i\leq x].
\end{align*}
\subsection*{Motivation}
Our work is motivated by estimating high-dimensional genetic covariance matrices (G-matrix) within random effects models. For example, consider a one-way full sibling model with $F$ families and $S_f$ siblings in each family.\footnote{Please that $F$ and $S_f$ are not the usual notation; they are adopted in this context to avoid notation collision.} For each of the $s$th sibling in the $f$th family, we measure $p$ gene expression traits $y_{fs}\in\mathbbm{R}^p$, modeled by 
\begin{align}\label{eq:full.sib.design}
    y_{fs}=\alpha_f + \epsilon_{fs},\quad \alpha_f \sim N(0,\Sigma_A),\quad \epsilon_{fs}\sim N(0,\Sigma_E),
\end{align}
with independent family and individual effects $\alpha_f$, $\epsilon_{fs}$. We focus on settings in which both $F$ and $p$ are large and of comparable order. For instance, at a phenome-wide scale, there could be up to thousands of gene expression traits measured for each sample.

Suppose we stack the trait measurements $y_{fs}$ by row as a matrix $Y\in\mathbbm{R}^{n_s\times p}$, where the number of samples $n_s=\sum_{f=1}^FS_f$. Thus, we can express the model in the matrix form $Y=U\alpha + \epsilon$, where $U= \text{diag}(\mathbbm{1}_{S_1},...,\mathbbm{1}_{S_F})\in\mathbbm{R}^{n_s\times F}$ is the membership matrix that assigns each sibling to the corresponding family, $\alpha\in\mathbbm{R}^{F\times p}$, $\epsilon\in\mathbbm{R}^{n_s\times p}$ are the family and individual effects. To estimate the eigenvalue distributions of $\Sigma_A$ and $\Sigma_E$, we rely on the sum of squares matrix
\begin{align}\label{eq:Bn.full.sib}
    B_p=\frac{1}{F}Y^T\pi Y,
\end{align}
with $\pi=U(U^TU)^{-1}U^T$ being the projection matrix on the column space of $U$. Let us see how $B_p$ is in distribution equivalent to the form (\ref{eq:Bn}), with the matrix dimensions $(n,N)=(p,F)$ here. Plugging in the matrix form of $Y$, we can show that $(U^TU)^{-1/2}U^T(U\alpha)\deq L_1X_1\Sigma_A^{1/2}$, $(U^TU)^{-1/2}U^T(\epsilon)\deq L_2X_2\Sigma_E^{1/2}$, with $L_1=\text{diag}\{\sqrt{S_1},...,\sqrt{S_F}\}$, $L_2=\text{Id}_F$  derived from the membership matrix $U$, $X_1=[x_{11},\ldots,x_{1F}]^T$, $X_2=[x_{21},\ldots,x_{2F}]^T\in\mathbbm{R}^{F\times p}$ are independent random matrices with i.i.d. standard Gaussian entries. Therefore, we can equivalently express $B_p$ as
\begin{align*}
    B_p\deq\frac{1}{F}(L_1X_1\Sigma_A^{1/2}  +L_2X_2\Sigma_E^{1/2})^T(L_1X_1\Sigma_A^{1/2}  +L_2X_2\Sigma_E^{1/2}).
\end{align*}
Let $T_{f}=l_{1f}^2\Sigma_A+l_{2f}^2\Sigma_E$, then by the independence of the family and individual effects, there further exist i.i.d. standard Gaussian vectors 
$x_f\in\mathbbm{R}^p$ such that $l_{1f}\Sigma_A^{1/2}x_{1f}+l_{2f}\Sigma_E^{1/2}x_{2f}\deq T_{f}^{1/2}x_f$. Therefore, we can alternatively express $B_p$ as
\begin{align*}
    B_p&\deq\frac{1}{F}\sum_{f=1}^F T_{f}^{1/2}x_fx_f^TT_{f}^{1/2}, \quad T_{f}=l_{1f}^2\Sigma_A+l_{2f}^2\Sigma_E.
\end{align*}
The full sibling model contains two levels of variation and corresponds to $k=2$ in our general formulation of $T_f$ as in (\ref{eq:Tn}). Introducing half-siblings, corresponding to $k=3$, extends the model to the full-sib half-sib design. This model is widely used for estimating breeding values and heritability, which are essential for optimizing selection schemes in poultry breeding programs \cite{falconer1996introduction}. Further discussions on nested multivariate random effects models are given in Section \ref{subsec:multi.random}.
\subsection*{Approach and prior work}
We aim to establish a central limit theorem for the difference between the LSS of $B_n$ and that of its deterministic equivalent $\deteqv{B}_n$. The latter is deterministic, it can be computed from $T_{j}$, $j=1,\ldots,N$, and shares the same limiting spectral distribution as $B_n$. A formal definition of this matrix will be presented in Section \ref{sec:prelim}.

 Informally, we prove the existence of sequences of biases $\mu_n$ and variances $\sigma_n$, such that 
\begin{align*}
  \sigma_n^{-1}\left[ \sum_{i=1}^n\left(f(\lambda_i)-f(\deteqv{\lambda}_i)\right)-\mu_n\right]
\end{align*}
converges to a standard normal distribution. Here, $\deteqv{\lambda}_1,\ldots,\deteqv{\lambda}_n$ denote the eigenvalues of $\deteqv{B}_n$. Moreover, we generalize our proof to establish the asymptotic normality for an arbitrary finite list of functions $\{f_\nu\}$.

For sample covariance matrices corresponding to $B_n$ as in (\ref{eq:Bn}) with all $T_{j}\equiv T$, Bai and Silverstein \cite{baisilverstein2004} established the CLT of linear spectral statistics using a three-step strategy:
\renewcommand\labelitemi{\scriptsize$\bullet$}
\begin{itemize}
    \item Reduce problem to showing the CLT for scaled Stieltjes transform 
    \begin{align*}
        M_n(z)&=\text{Tr}(B_n-z)^{-1}-\text{Tr}(\deteqv{B}_n-z)^{-1}\\
        &=M_n^1(z)+M_n^2(z),
    \end{align*}
    where
    \begin{align*}
        M_n^1(z)&=\text{Tr}(B_n-z)^{-1}-\E \text{Tr}(B_n-z)^{-1},\\
        M_n^2(z)&=\E \text{Tr}(B_n-z)^{-1}-\text{Tr}(\deteqv{B}_n-z)^{-1}.
    \end{align*}
    \item Prove CLT for the centralized term $M_n^1(z)$ via the Martingale Central Limit Theorem.
    \item Compute the limit of the deterministic term $M_n^2(z)$.
\end{itemize}
This strategy has since been extensively adapted to show CLT of linear spectral statistics for broader classes of large sample covariance matrices. For information-theoretic type sample covariance matrix $S_n=(\frac{1}{\sqrt{N}}L_1XL_2+A)(\frac{1}{\sqrt{N}}L_1XL_2+A)^T$ with $L_1$, $L_2$ diagonal and $A$ deterministic, \cite{silverstein2012} established the CLT for log-determinant statistics, a special class of linear spectral statistics. For $S_n=(\frac{\sigma}{\sqrt{N}}X+A)(\frac{\sigma}{\sqrt{N}}X+A)^T$ with deterministic $A$, 
\cite{BANNA20202250} showed a CLT for linear spectral statistics corresponding to general functions $f$. \cite{bai2019central} establishes a result most closely related to ours; they consider matrices $B_n$ as in (\ref{eq:Bn}) with $T_{j}=l_{j}^2\Sigma$, which can be viewed as corresponding to a variance component estimator within linear random-effects models with a single level of randomness.
\subsection*{Our contributions}

We generalize the three-way strategy outlined above in three directions: Firstly, we adapt the strategy to accommodate a matrix $B_n$ of type (\ref{eq:Bn}) having $k$ levels of variation with a more complicated spectral distribution than previously studied. As we shall detail in Section \ref{sec:prelim}, \cite{fan2017eigenvalue} constructed the deterministic equivalent Stieltjes transform of $B_n$ building upon the solution to a system of $2k$ equations, which plays a key role in our construction of the limiting biases and variances. To the best of our knowledge, in past literature, the CLT has been established for sample covariance matrices with Stieltjes transforms determined by a system of no more than two equations \cite{bai2019central}, which corresponds to $k=1$ in our context for Gaussian random variables.

Secondly, we establish a CLT for the deviation of the linear spectral statistics calculated using the empirical spectral distribution of $B_n$, relative to its deterministic equivalent, instead of to the limiting spectral distribution \cite{baisilverstein2004,bai2019central}. In this formulation, we shall construct the bias $\mu_n$ and covariance $\sigma_n$ for the CLT using the deterministic equivalent matrix $\deteqv{B}_n$. This approach enables computation of $\mu_n$ and $\sigma_n$ based on finite sample quantities without assuming the convergence of the aspect ratio $n/N$ or the empirical spectral distributions of $\Sigma_{r}$ and $L_r$. Previous studies such as \cite{BANNA20202250,silverstein2012} have also established the CLT with respect to the deterministic equivalent, but have provided only limited analysis of the limiting bias. Specifically, they demonstrated that the bias asymptotically vanishes for complex Gaussian random variables. Our research focuses on real Gaussian random variables, for which the limiting bias persists.  

Thirdly, our proof of the tightness of $M_n^1$ simplifies significantly by utilizing the concentration results from \cite{Guionnet2000}, applied to the bulk distribution of $B_n$.

Finally, we propose a method to numerically evaluate the limiting bias and covariance in the central limit theorem, building upon an iterative algorithm proposed in \cite{fan2017eigenvalue} and the trapezoidal rule \cite{trefethen2014exponentially}.



\subsection*{Outline of paper}
Section \ref{sec:model.main.results} formalizes the model setup and main results, transitioning the discussion to the proof of the CLT for the scaled Stieltjes transform, as the first step of our three-step strategy. This section also specializes our result to the nested multivariate linear random effects models with up to $k$ levels of randomness. Section \ref{sec:prelim} presents essential preliminaries and tools. Section \ref{sec:conv.mn1} establishes the CLT for the centralized term $M_n^1$ (step 2 of the strategy). Section \ref{sec:bias} details the convergence of the deterministic term $M_n^2$ and computes the limiting bias for the final CLT (step 3 of the strategy). The remainder of the proof and additional details are deferred to the supplementary appendices.

\section{Model and Main results}\label{sec:model.main.results}
\subsection{Model setup}\label{subsec:model.setup}
In this paper, we make the following assumptions:
\begin{assumption}\label{assump}
\begin{enumerate}
\item $x_{j}=(x_{ji})_{i=1}^n\in\mathbbm{R}^n$ are i.i.d. standard Gaussian, for $j=1,\ldots, N$. 
    \item $c<n/N<C$. 
    \item $\Sigma_{r}$, $r=1,\ldots k$, are real symmetric. 
    \item $\sup_n\max_{j,r}l_{jr}<\infty$, $\sup_n\max_r\lVert \Sigma_{r}\rVert_2<\infty.$
\end{enumerate}
\end{assumption}
\noindent For simplicity of notations, define $L_r=\text{diag}(l_{1r},\ldots, l_{Nr})$, $r=1,\ldots,k$, $\mathrm{s}_L=\sup_n\max_r\lVert L_{r}\rVert_2$, $\mathrm{s}_{\Sigma}=\sup_n\max_r\lVert \Sigma_{r}^{1/2}\rVert_2$. Then, we can equivalently represent the matrix $B_n$ defined in (\ref{eq:Bn}) as
\begin{align}
    B_n&=\frac{1}{N}\sum_{r,s=1}^k\Sigma_r^{1/2}X_r^TL_rL_sX_s\Sigma_s^{1/2}\label{eq:Bn.1}\\
    &=\frac{1}{N}(\sum_{r=1}^kL_rX_r\Sigma_r^{1/2})^T(\sum_{r=1}^kL_rX_r\Sigma_r^{1/2}).\label{eq:Bn.2}
\end{align}
Above, $X_r\in\mathbbm{R}^{N\times n}$ are random matrices with i.i.d. Gaussian entries.
\subsection{Main result: Central limit theorem}
Define
\begin{align*}
   G_n(x)=n \left(F^{B_n}(x)-F^{\deteqv{B}_n}(x)\right),
\end{align*}
where $B_n$ takes the form (\ref{eq:Bn}), $\deteqv{B}_n$ is the deterministic equivalent of $B_n$, as defined in (\ref{eq:def.deteqv}).
 \begin{theorem}\label{thm:main}
       Under Assumption \ref{assump}, let $f_1,\ldots,f_l$ be functions on $\mathbbm{R}$ analytic on an open interval containing 
\begin{align}
  \left[0,k^2(1+\sqrt{C})^2\mathrm{s}_L^2\mathrm{s}_{\Sigma}^2\right].\label{eq:open.int}
\end{align}
Then
\begin{align*}
  \Lambda_n^{-1/2}  \left[  \left(\int f_1(x)dG_n(x),...,\int f_l(x)dG_n(x)\right)-\Gamma_n\right]
\end{align*}
converges weakly to $N(0,\text{Id}_l)$,with mean 
\begin{align}
    \Gamma_{n}[i]=-\frac{1}{2\pi i}\oint f_i(z)\mu_n(z)dz,\label{eq:main.mean}
\end{align}
and covariance 
\begin{align}\label{eq:main.cov}
    \Lambda_n[i,j]=-\frac{1}{2\pi^2}\oint\oint f_i(z_1)f_j(z_2)\sigma_n^2(z_1,z_2)dz_1dz_2,
\end{align}
where $\mu_n$ and ${\sigma_n}$ are respectively defined in (\ref{eq:mu}) and (\ref{eq:sigma}) in Lemma \ref{lem:main}. The contours in (\ref{eq:main.mean}) and (\ref{eq:main.cov}) are closed and taken in the positive direction in the complex plane, each containing the open interval (\ref{eq:open.int}), with the two contours in (\ref{eq:main.cov}) taken to be nonoverlapping.
    \end{theorem}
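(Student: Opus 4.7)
The plan is to follow the three-step strategy outlined in the introduction, adapted to the $k$-level setting. First, since each $f_i$ is analytic on an open set containing the compact interval (\ref{eq:open.int}) and standard support arguments (together with Assumption \ref{assump} and the structure of $\deteqv{B}_n$) imply that the eigenvalues of both $B_n$ and $\deteqv{B}_n$ lie inside any contour $\mathcal{C}$ enclosing that interval with probability tending to one, Cauchy's integral formula gives
\begin{equation*}
\int f_i(x)\,dG_n(x) = -\frac{1}{2\pi i}\oint_{\mathcal{C}} f_i(z)\,M_n(z)\,dz,\qquad M_n(z) = \Tr(B_n-z)^{-1} - \Tr(\deteqv{B}_n-z)^{-1}.
\end{equation*}
It therefore suffices to establish a functional CLT for $M_n(\cdot)$ on $\mathcal{C}$, viewed as a random element of $C(\mathcal{C},\mathbb{C})$; joint Gaussianity of the LSS with the stated mean $\Gamma_n$ and covariance $\Lambda_n$ then follows by applying the continuous linear functionals $g\mapsto -\frac{1}{2\pi i}\oint f_i\,g$ to this process, using also the nonoverlap condition on the pair of contours in (\ref{eq:main.cov}) to invoke Fubini on the double integral of $\sigma_n^2(z_1,z_2)$.

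Decompose $M_n = M_n^1 + M_n^2$ as in the introduction. For the centered piece $M_n^1$, I would use the filtration $\mathcal{F}_j = \sigma(x_1,\ldots,x_j)$ and write
\begin{equation*}
M_n^1(z) = \sum_{j=1}^N (\E_j - \E_{j-1})\Tr(B_n-z)^{-1},
\end{equation*}
a sum of martingale differences. Sherman--Morrison applied to the rank-one contribution $N^{-1}T_j^{1/2}x_jx_j^T T_j^{1/2}$, combined with standard Gaussian quadratic-form concentration, reduces each increment to a tractable form. The core computation is to identify the limit of the bracket
\begin{equation*}
\sum_{j=1}^N \E_{j-1}\bigl[(\E_j-\E_{j-1})\Tr(B_n-z_1)^{-1}\cdot(\E_j-\E_{j-1})\Tr(B_n-z_2)^{-1}\bigr],
\end{equation*}
which yields the covariance kernel $\sigma_n^2(z_1,z_2)$. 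Here the $k$-level structure enters through $T_j = \sum_r l_{jr}^2\Sigma_r$, and the limit must be expressed in terms of the solution of the Fan--Johnstone $2k$-dimensional fixed-point system that defines $\deteqv{B}_n$. The martingale CLT of Billingsley then delivers finite-dimensional convergence of $M_n^1$, with the Lyapunov moment condition handled by the Gaussian tails of $x_j$ together with the uniform bound $\lVert T_j\rVert_2 \le k\,\mathrm{s}_L^2\,\mathrm{s}_\Sigma^2$.

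For the deterministic piece $M_n^2$, the plan is to compare $\E(B_n-z)^{-1}$ with $(\deteqv{B}_n-z)^{-1}$ via the resolvent identity and Gaussian integration-by-parts, retaining the second-order correction that survives the expectation. In the complex-Gaussian regime studied by some prior works this term vanishes, but in the present real-Gaussian setting it persists, and its limit is precisely $\mu_n(z)$. I expect this to be the main obstacle: because the deterministic equivalent is given only implicitly through the coupled $2k$-dimensional system, isolating the bias requires careful bookkeeping of which traces of products of the $\Sigma_r$ and $L_r$ survive, plus a perturbation analysis of the fixed-point system to express $\mu_n$ in a form amenable to the closed-contour integration in (\ref{eq:main.mean}).

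Finally, tightness of $M_n$ on $\mathcal{C}$ is needed to upgrade finite-dimensional convergence to a functional CLT. Rather than the moment arguments used in \cite{baisilverstein2004}, I would apply the Gaussian concentration inequality of \cite{Guionnet2000} to the Lipschitz functional $z\mapsto \Tr(B_n-z)^{-1}$, whose Lipschitz constant is uniform on any contour bounded away from the spectrum. This gives sub-Gaussian control of $M_n^1(z)-\E M_n^1(z)$ pointwise, and then equicontinuity in $z$ via a standard net/chaining argument; $M_n^2$ is deterministic and uniformly bounded on $\mathcal{C}$ by the deterministic equivalent theory recalled in Section \ref{sec:prelim}. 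Combining the functional CLT for $M_n$ with the contour-integral representation above yields the statement of the theorem with $\Gamma_n$ and $\Lambda_n$ as in (\ref{eq:main.mean})--(\ref{eq:main.cov}).
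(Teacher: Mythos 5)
Your plan follows essentially the same route as the paper: the Cauchy-integral reduction of the LSS to $M_n(z)$, the Bai--Silverstein decomposition $M_n=M_n^1+M_n^2$ with a martingale CLT for the centered part, a deterministic computation for the bias, and the Guionnet--Zeitouni concentration inequality for tightness (the last of these is exactly the paper's innovation, so you have correctly identified it). The one substantive technical difference is in the bias step: you propose Gaussian integration by parts applied to $\E(B_n-z)^{-1}-(\deteqv{B}_n-z)^{-1}$ directly, whereas the paper inserts the intermediate deterministic matrix $R(z)=N^{-1}\sum_j\psi_j(z)T_j-zI$, expands $R^{-1}-D^{-1}$ via the resolvent identity, and kills the fluctuating terms with quadratic-form concentration; either route should reach the same $\mu_n$, but the paper's choice of pivot $R$ is what makes the surviving second-order terms expressible through the $k\times k$ linear systems defining $\zeta_1,\zeta_2,\zeta_3$ and $\nu_r$, and your sketch does not yet commit to a pivot that would make that bookkeeping close.

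There is one genuine gap: your tightness argument asserts a Lipschitz constant ``uniform on any contour bounded away from the spectrum,'' but the contour in (\ref{eq:main.mean})--(\ref{eq:main.cov}) must \emph{enclose} the interval (\ref{eq:open.int}), which contains the support of $F^{B_n}$, so no admissible contour is bounded away from the spectrum. The concentration and finite-dimensional arguments only control $M_n^1$ on $\{z:|\mathrm{Im}\,z|>v_0\}$ for fixed $v_0>0$; the vertical segments of the contour near the real axis (at real part beyond the spectral edge) require a separate argument. The paper handles this by splitting the contour into $\mathcal{C}_u\cup\overline{\mathcal{C}_u}\cup\mathcal{C}_j$, using the exponential-probability bound on $\lambda_{\max}(B_n)$ (Lemma \ref{lemma: concen.eig.sofs}) to restrict to the event $Q_n$, showing $\int_{\mathcal{C}_j}\E[|M_n^1(z)\mathbbm{1}_{Q_n}|^2]dz=O(v_0)$, and letting $v_0\to0$ (Lemma \ref{lem.xiqn}); an analogous $L^2$ bound is needed for $M_n^2$ and $\mu_n$ on $\mathcal{C}_j$. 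Without this step your functional CLT on the full contour does not follow from tightness on $C_0$. A second, smaller omission is the truncation of the $x_{ji}$ at level $\delta_n\sqrt{n}$ (Lemma \ref{lem:trunc}), which the paper needs before the quadratic-form concentration bounds of Lemma \ref{lem:concen} apply with the stated rates.
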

  For every $x$ in interval (\ref{eq:open.int}), by Cauchy's theorem,
    \begin{align*}
        f(x) = \frac{1}{2\pi i}\oint\frac{f(z)}{z-x}\mathrm{d}z.
    \end{align*}
    We can thus rewrite $G_n(f)=\int f(x)dG_n(x)$ in terms of $M_n(z)$ as follows
    \begin{align*}
        G_n(f) 
        &= \frac{1}{2\pi i}\int\oint\frac{f(z)}{z-x}n[F^{B_n}-F^{\deteqv{B}_n}](dx)dz\\
        &= \frac{1}{2\pi i}\oint f(z)dz\int\frac{1}{z-x}n[F^{B_n}-F^{\deteqv{B}_n}](dx)\\
        &= -\frac{1}{2\pi i}\oint f(z)\left[\text{Tr}(B_n-z)^{-1}-\text{Tr}(\deteqv{B}_n-z)^{-1}\right]dz\\
        &= -\frac{1}{2\pi i}\oint f(z)M_n(z)dz
    \end{align*}
Therefore, Theorem \ref{thm:main} immediately follows from the limiting results of $M_n$ formalized below.
\begin{lemma}\label{lem:main}
  Under Assumption \ref{assump}:\\
  (i) For any $v_0>0$, $\mathbbm{C}_0=\{z:|\text{Im }z|>v_0\}$,  $\{M_n(\cdot)\}$ forms a tight sequence on $\mathbbm{C}_0$.\\
  (ii) There exist $\mu_n(\cdot)$ and $\sigma_n(\cdot,\cdot)$ such that for any function $g$ analytic on an interval containing (\ref{eq:open.int}), we have that 
 \begin{align}\label{eq:oint.g}
    \frac{\oint g(z)(M_n(z)-\mu_n(z))dz}{\sqrt{2\oint\oint g(z_1)g(z_2)\sigma^2_n(z_1,z_2)dz_1dz_2}}
\end{align}
  converge weakly to a standard complex Gaussian random variable, where
 \begin{align}
      \mu_n(z)&=\deteqv{d}_{n0}(z)+n/N\sum_{r=1}^k\nu_r(z){\sum_{s=1}^k}h_{N+1}^{rs}(z,z)\Xi_0^s(z),\label{eq:mu}\\
      \sigma_n^2(z_1,{z}_2)&=\frac{\partial^2}{\partial z_2\partial z_1}\sum_{r=1}^k\left(N^{-2}{\sum_{j=1}^N}l_{rj}^2\deteqv{b}_j(z_1)\deteqv{b}_j(z_2)\deteqv{w}_{jr}(z_1,z_2)\right)\label{eq:sigma}.
  \end{align}
Above, the quantities $\deteqv{d}_{n0}$, $\nu_r$, $\Xi_0^a$ are defined in (\ref{eq:dnr.bias.deteqv}), (\ref{eq:nu.r}) and (\ref{eq:xi.0.a}). The quantities $h_j^{ab}$, $\deteqv{b}_j$, $\deteqv{w}_{jr}$ are defined in (\ref{eq:xi.h.Lam}), Lemma \ref{lem:Mn1.prelim} and (\ref{eq:deteqv.w}). The contours in (\ref{eq:oint.g}) are closed and taken in the positive direction in the complex plane, each containing the open interval (\ref{eq:open.int}), with the two contours in the denominator taken to be nonoverlapping.
\end{lemma}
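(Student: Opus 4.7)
The plan is to follow the Bai--Silverstein three-step decomposition, writing $M_n(z) = M_n^1(z) + M_n^2(z)$ with $M_n^1(z) = \text{Tr}(B_n-z)^{-1} - \E\text{Tr}(B_n-z)^{-1}$ and $M_n^2(z) = \E\text{Tr}(B_n-z)^{-1} - \text{Tr}(\deteqv{B}_n-z)^{-1}$. Since $\deteqv{B}_n$ is deterministic, part (ii) reduces to a martingale CLT for $M_n^1$ about its zero mean combined with identification of the deterministic limit $\mu_n$ of $M_n^2$; the numerator of (\ref{eq:oint.g}) inherits a complex Gaussian limit by the continuous-mapping theorem applied to the holomorphic functional $M \mapsto \oint g(z) M(z)\,dz$, and the denominator is its covariance by Fubini.

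For tightness in part (i), I would use that on $\mathbbm{C}_0$ the uniform resolvent bound $\lVert (B_n-z)^{-1}\rVert \le v_0^{-1}$ yields $|M_n(z)| \le 2n/v_0$ together with Lipschitz continuity in $z$, so by Arzel\`a--Ascoli it suffices to establish pointwise tightness of $M_n^1(z)$. This follows from Gaussian concentration of $\text{Tr}(B_n-z)^{-1}$ viewed as a Lipschitz function of the i.i.d.\ Gaussian entries in $X_1,\ldots, X_k$: the Poincar\'e-type inequalities of \cite{Guionnet2000} give a uniform $O(1)$ variance bound on compact subsets of $\mathbbm{C}_0$. This is the main simplification relative to the delicate moment estimates of \cite{baisilverstein2004}.

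For the CLT on $M_n^1$, I would form the martingale decomposition $M_n^1(z) = \sum_{j=1}^N (\E_j - \E_{j-1})\text{Tr}(B_n-z)^{-1}$, where $\E_j$ conditions on $x_1,\ldots,x_j$, and verify the hypotheses of the martingale CLT. The Sherman--Morrison update on the leave-one-out resolvent $D_j^{-1}$ expresses each martingale difference via quadratic forms in $x_j$; standard concentration lets me replace these quadratic forms by normalized traces, and Lemma \ref{lem:Mn1.prelim} supplies the deterministic equivalent $\deteqv{b}_j$ of $(1 + N^{-1}\text{Tr}\, T_j D_j^{-1})^{-1}$. The multi-level structure enters when I expand $T_j = \sum_r l_{jr}^2 \Sigma_r$ inside the conditional variance: each cross-term $\Sigma_r \E_{j-1}[D_j^{-1}] \Sigma_s$ must be controlled through the $2k$-equation fixed-point system of \cite{fan2017eigenvalue}, and it is precisely this reduction that produces the weights $l_{rj}^2 \deteqv{b}_j(z_1)\deteqv{b}_j(z_2)\deteqv{w}_{jr}(z_1,z_2)$ of (\ref{eq:sigma}). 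The Lindeberg condition is routine from the $L^4$ bound on each difference supplied once again by Gaussian concentration.

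The main obstacle, and the content of step 3, is computing $\mu_n = \lim_n M_n^2$. Here I would expand $\E[b_j] - \deteqv{b}_j$ to first order in a perturbation parameter, obtaining an inhomogeneous linear system in the vector of componentwise biases $\bigl(\E\text{Tr}\,\Sigma_r(B_n-z)^{-1} - \text{Tr}\,\Sigma_r(\deteqv{B}_n-z)^{-1}\bigr)_{r=1}^k$. The coefficient matrix is built from $h_{N+1}^{rs}(z,z)$ and the inhomogeneous term from $\nu_r$ and $\Xi_0^s$; inverting this $k$-dimensional system produces the explicit form (\ref{eq:mu}). The hard part is controlling the $O(1/N)$ remainder of this Taylor step uniformly in $z \in \mathbbm{C}_0$: because the $k$ variance levels interact through the nonlinear fixed-point map, the standard single-level second-moment arguments do not factor, so I anticipate needing a careful cumulant expansion combined with repeated use of Guionnet concentration to close the estimates. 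Once $M_n^2 - \mu_n \to 0$ uniformly and $M_n^1$ satisfies the martingale CLT with covariance kernel $\sigma_n^2$, the linear contour functional in (\ref{eq:oint.g}) is complex Gaussian with variance $2\oint\oint g(z_1)g(z_2)\sigma_n^2(z_1,z_2)\,dz_1\,dz_2$, completing the lemma.
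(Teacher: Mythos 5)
Your outline follows the paper's strategy almost exactly: the same decomposition $M_n=M_n^1+M_n^2$, the martingale CLT for $M_n^1$ with quadratic forms replaced by traces and the covariance reduced to the $k$-dimensional fixed-point system producing $\deteqv{w}_{jr}$, tightness via the Guionnet--Zeitouni concentration of the spectral measure, and an inhomogeneous linear $k\times k$ system for the componentwise biases yielding $\mu_n$. Two points, however, need attention. First, and most substantively, you pass from the process-level convergence of $M_n^1$ on $\mathbbm{C}_0=\{|\text{Im }z|>v_0\}$ to the full contour integral by ``the continuous-mapping theorem applied to the holomorphic functional $M\mapsto \oint g(z)M(z)dz$.'' This does not work as stated: the contour must enclose the real interval (\ref{eq:open.int}) and therefore necessarily passes through the strip $|\text{Im }z|\le v_0$, where neither the finite-dimensional CLT nor the tightness you establish applies. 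The paper closes this gap by splitting the contour into horizontal pieces $\mathcal{C}_u$, $\overline{\mathcal{C}_u}$ and short vertical pieces $\mathcal{C}_j$ located outside the support of the spectrum, and then showing via the extreme-eigenvalue concentration (Lemma \ref{lemma: concen.eig.sofs}) that $\lim_{v_0\to0}\limsup_n\int_{\mathcal{C}_j}\E|M_n^1\mathbbm{1}_{Q_n}|^2\,dz=0$ (Lemma \ref{lem.xiqn}) and, separately in the appendix, that the $\mathcal{C}_j$-contributions of $M_n^2$ and $\mu_n$ also vanish. Some version of this truncation near the real axis is indispensable and must be supplied.

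Second, your tightness argument is internally inconsistent: the deterministic bounds $|M_n(z)|\le 2n/v_0$ and Lipschitz continuity of $M_n$ in $z$ both carry a factor of $n$, so Arzel\`a--Ascoli applied to them gives nothing. What actually works---and what the paper does---is to apply the concentration inequality of Lemma \ref{lem:concen.spectral.S} directly to the increment $\tilde f(\lambda)=(\lambda-z_1)^{-1}-(\lambda-z_2)^{-1}$, whose Lipschitz constant is proportional to $|z_1-z_2|$; the $n^2$ in the exponent of the concentration bound then beats the factor $n$ in $M_n^1=n(m_n-\E m_n)$ and yields the Billingsley modulus-of-continuity condition. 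You name the right tool, so this is a matter of assembling it correctly rather than a wrong idea, but the chain of implications you wrote would not close as stated.
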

\subsection{{Overview of proof}}
As outlined in the introduction, our proof generalizes a three-step strategy developed by Bai and Silverstein \cite{baisilverstein2004}. This approach consists of three main components: 
\begin{enumerate}
    \item[(a)] finite-dimensional CLT of the centralized term $M_n^1(z)$, as formalized in equation (\ref{Mn1:finite.sum});
    \item[(b)] tightness of $M_n^1(z)$ as in (\ref{eq:tightness.def});
    \item[(c)] convergence of the deterministic term $M_n^2(z)$ as in (\ref{eq:mn2.conv.def}).
\end{enumerate}

In addressing the first component, we establish the CLT by applying the martingale central limit theorem. Initially, we write $M_n^1(z)$ as a series of martingale differences and demonstrate that this sequence exhibits light-tailed behavior, thereby satisfying condition (ii) of Theorem \ref{thm:martingale}. The primary distinction in our approach concerns the calculation of limiting covariance. This task is challenging due to the complexity of matrix $B_n$ defined in (\ref{eq:Bn}). Specifically, our calculations are reduced to evaluating the limits of the normalized traces:
\begin{align}\label{eq:norm.trace}
    \frac{1}{N}\text{Tr}(\E_jD_j^{-1}(z_1)\Sigma_r\E_jD_j^{-1}(z_2)\Sigma_s), \quad {\forall r,s=1,\ldots,k}.
\end{align}
Above, $D_j(z)=B_n-N^{-1}T_j^{1/2}x_jx_j^TT_j^{1/2}-zI$, and $\E_j$ denotes the conditional expectation with respect to the $\sigma$-field generated by $x_1,\ldots,x_j$. This deviates from the formulations in \cite{baisilverstein2004} and \cite{bai2019central}, where the focus was on:
\begin{align*}
    \frac{1}{N}\text{Tr}(\E_jD_j^{-1}(z_1)\Sigma \E_jD_j^{-1}(z_2)\Sigma),
\end{align*}
with $B_n=\frac{1}{N}\sum_{i=1}^N\Sigma^{1/2}x_ix_i^T\Sigma^{1/2}$ in \cite{baisilverstein2004}, and $B_n=\frac{1}{N}\sum_{i=1}^Nl_i^2\Sigma^{1/2}x_ix_i^T\Sigma^{1/2}$ in \cite{bai2019central}. 

As a result, we derive asymptotic equivalents for the normalized traces as delineated in equation (\ref{eq:norm.trace}), where these equivalents are defined such that their differences with the original terms vanish in probability. These equivalents result from solving a system of $k$ equations, each composed of elements calculated from $L_r$ and $\Sigma_r$, $r=1,\ldots,k$. Specifically, the system incorporates terms $\deteqv{g}_2^{(r)}$ for $r = 1, \ldots, k$, which are also critical in forming the equations that determine the deterministic equivalent of $B_n$, formally defined in (\ref{eq:def.deteqv}). These terms can be computed using the iterative algorithm proposed in \cite{fan2017eigenvalue}. In contrast, earlier works \cite{baisilverstein2004,bai2019central} addressed only the case where $k=1$, thus eliminating the need to solve a system of equations.

Our proof of tightness for the sequence $\{M_n^1(z)\}$ is distinct from the approaches described in \cite{baisilverstein2004,bai2019central}. From the fact that Gaussian random variables satisfy logarithmic Sobolev inequalities, we apply  Theorem 1.1 from \cite{Guionnet2000} to establish the concentration of the spectral measure of $B_n$. Based on this result, it is straightforward to verify tightness.

Analogous to our calculation of the limiting covariance, the limiting bias--which corresponds to the limit of $M_n^2(z)$--is derived from components that solve systems of $k$ equations. Each of these systems comprises terms calculated from $L_r$ and $\Sigma_r$, $r=1,\ldots,k$.

\subsection{Numerical evaluation of bias and covariance}\label{subsec:numeric}
The formulas for bias and covariance presented in Theorem \ref{thm:main} and Lemma \ref{lem:main} may initially appear computationally challenging. In this section we show that these quantities can be accurately evaluated numerically. An illustrative example is presented in Section \ref{subsubsec:full.sib.sim} under the full sibling model.
\subsubsection{Evaluation of $\mu_n$ and $\sigma_n$}
From equations (\ref{eq:mu}) and (\ref{eq:sigma}), it turns out that $\mu_n$ and $\sigma_n$ are determined by the three quantities $L_r$, $\Sigma_r$ for $r=1,\dots,k$, and $\Tilde{b}_j$ for $j=1,\ldots,N$. According to Lemma \ref{lem:Mn1.prelim}, $\Tilde{b}_j$ depends on both $L_r$ and $\deteqv{g}_2^{(r)}$ for each $r=1,\dots,k$. Given that $L_r$ and $\Sigma_r$ are known by design, the challenge lies in numerically evaluating $\deteqv{g}_2^{(r)}$ for $r=1,\dots,k$, which are solutions to the system of equations presented in (\ref{eq:m0}). To address this, we use an iterative algorithm proposed by \cite{fan2017eigenvalue}, formalized in the subsequent lemma.

\begin{lemma}[Paraphrased from Theorem 1.4 in \cite{fan2017eigenvalue}]
    For each $z\in\mathbbm{C}^+$, the values $\deteqv{g}_i^{(r)}$, $i=1,2$, $r=1,\ldots k$ in the system of equations (\ref{eq:m0}) are the limits, as $t\rightarrow\infty$, of
the iterative procedure which arbitrarily initializes $\deteqv{g}_{1,0}^{(1)},\ldots, \deteqv{g}_{1,0}^{(k)}\in{\mathbbm{C}^+}$ and iteratively computes (for $t=0,1,2,\ldots$) $\deteqv{g}_{2,t}^{(r)}$ from $\deteqv{g}_{1,t}^{(r)}$ using (\ref{eq:m0}) and $\deteqv{g}_{1,t+1}^{(r)}$ from $\deteqv{g}_{2,t}^{(r)}$ using (\ref{eq:m0}).
\end{lemma}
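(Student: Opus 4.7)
Since the statement is paraphrased directly from \cite{fan2017eigenvalue}, the plan is to follow the contraction-on-the-upper-half-plane strategy used there. Define the full-cycle map $F:(\mathbbm{C}^+)^k\to(\mathbbm{C}^+)^k$ sending $(\deteqv{g}_{1,t}^{(1)},\ldots,\deteqv{g}_{1,t}^{(k)})$ to $(\deteqv{g}_{1,t+1}^{(1)},\ldots,\deteqv{g}_{1,t+1}^{(k)})$ by first computing $\deteqv{g}_{2,t}^{(r)}$ from $\deteqv{g}_{1,t}^{(r)}$ via (\ref{eq:m0}) and then computing the new $\deteqv{g}_{1,t+1}^{(r)}$ from $\deteqv{g}_{2,t}^{(r)}$ via (\ref{eq:m0}). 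The iteration converges if and only if $F$ has a unique attracting fixed point, so the argument reduces to establishing existence, uniqueness, and global attraction of that fixed point within $(\mathbbm{C}^+)^k$.

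The first step is to verify that $F$ is a well-defined holomorphic self-map. This requires showing that $\deteqv{g}_{2,t}^{(r)}\in\mathbbm{C}^+$ whenever $\deteqv{g}_{1,t}^{(r)}\in\mathbbm{C}^+$, and similarly that $\deteqv{g}_{1,t+1}^{(r)}\in\mathbbm{C}^+$. Both facts follow from inspecting the form of (\ref{eq:m0}): the right-hand sides are Stieltjes-transform-type expressions built from the real symmetric matrices $\Sigma_r$ and the real scalars $l_{jr}$, together with the common argument $z\in\mathbbm{C}^+$, so standard positivity arguments for such resolvent expressions show that the imaginary part has the correct sign.

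The second step is to establish convergence by equipping each factor of $\mathbbm{C}^+$ with the Poincar\'e hyperbolic metric and the product $(\mathbbm{C}^+)^k$ with a suitable weighted sup metric. By the Schwarz--Pick lemma, every holomorphic self-map of $\mathbbm{C}^+$ is nonexpansive in the hyperbolic metric, so each coordinate half-step is $1$-Lipschitz. I would then show that the composition of two half-steps, when restricted to the invariant compact subsets where $\mathrm{Im}\,\deteqv{g}_{1}^{(r)}$ is bounded away from $0$ and $\infty$ (using Assumption~\ref{assump} on $\mathrm{s}_L,\mathrm{s}_\Sigma$ and the fixed $\mathrm{Im}\,z>0$), is a strict contraction. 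Banach's fixed-point theorem then yields a unique fixed point and global convergence from any starting point in $(\mathbbm{C}^+)^k$. Since the deterministic equivalent Stieltjes transform of $B_n$ is known to satisfy (\ref{eq:m0}) and to lie in $(\mathbbm{C}^+)^{2k}$, the fixed point must coincide with the designated solution $(\deteqv{g}_i^{(r)})$, giving uniqueness of the solution in $(\mathbbm{C}^+)^{2k}$ as a byproduct.

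The main obstacle is the contraction bound for $k>1$. In the single-level case $k=1$ the claim follows from one direct Schwarz--Pick application, but for $k>1$ the coordinates are coupled both through the common argument $z$ and through the matrices $T_j=\sum_r l_{jr}^2\Sigma_r$ appearing in (\ref{eq:m0}), so the Jacobian of $F$ is a genuine $k\times k$ operator whose norm must be controlled. The remedy, following \cite{fan2017eigenvalue}, is to introduce weights on the coordinates depending on $\mathrm{s}_L$, $\mathrm{s}_\Sigma$, and $\mathrm{Im}\,z$ chosen so that the weighted operator norm of the linearization is strictly less than one, which then gives the required uniform contraction on compact invariant subsets and completes the proof.
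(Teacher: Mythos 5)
The paper does not actually prove this lemma: it is imported verbatim as Theorem 1.4 of \cite{fan2017eigenvalue}, so there is no in-paper argument to compare yours against. Your outline is the standard route for such results and is the same in spirit as the cited proof: show the full-cycle map is a holomorphic self-map of a product of half-planes (the sign of the imaginary parts does follow from the Nevanlinna-type computations the paper records in Lemma \ref{lem:nevanlinna}, though note the signs in (\ref{eq:m0}) mean it is $z\mapsto z\deteqv{g}_i^{(r)}(z)$, not $\deteqv{g}_i^{(r)}$ itself, that is Herglotz, so the correct invariant domain has to be identified with some care), then extract a unique attracting fixed point.

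The genuine gap is that the one step that carries all the difficulty --- strict contraction --- is asserted rather than argued, and the mechanism you propose for it is shakier than the one actually used in \cite{fan2017eigenvalue}. Controlling a weighted operator norm of the $k\times k$ Jacobian uniformly over an invariant region, for every $z\in\mathbbm{C}^+$ rather than just for $\mathrm{Im}\,z$ large, is precisely where a direct Banach-fixed-point argument tends to break down; you also do not show that an arbitrary initialization in $(\mathbbm{C}^+)^k$ enters your ``invariant compact subsets'' where $\mathrm{Im}\,\deteqv{g}_1^{(r)}$ is bounded away from $0$ and $\infty$, without which Banach's theorem gives only local, not global, convergence. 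The cited reference avoids both problems by invoking the Earle--Hamilton fixed point theorem: one full iteration maps a suitable bounded domain \emph{strictly} into itself (a containment that can be read off from the explicit trace formulas in (\ref{eq:m0}) together with the bounds $\mathrm{s}_L,\mathrm{s}_\Sigma<\infty$ and $\mathrm{Im}\,z>0$), and that alone forces the map to be a strict contraction in the Carath\'eodory metric with a unique globally attracting fixed point, no Jacobian computation required. If you want a self-contained proof rather than a citation, replacing your contraction step with that argument is the cleanest way to close the gap.
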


\subsubsection{Evaluation of $\Gamma_n$ and $\Lambda_n$}
Having obtained accurate estimates of $\mu_n$ and $\sigma_n$, we evaluate the contour integrals for $\Gamma_n$  and $\Lambda_n$ given in (\ref{eq:main.mean}) and (\ref{eq:main.cov}) using the trapezoidal rule, a widely-employed numerical computation technique. We shall follow a formulation presented in the survey paper \cite{trefethen2014exponentially}. Informally, if we take the contours in (\ref{eq:main.mean}) and (\ref{eq:main.cov}) to be circular containing the interval (\ref{eq:open.int}), then the resulting approximation, assuming known values of $\mu_n$ and $\sigma_n$, achieves exponential accuracy relative to the number of function evaluations, $R$. This result is a direct corollary of Theorem \ref{lem:trefethen2014} formalized below.
    \begin{theorem}[Theorem 2.2 in \cite{trefethen2014exponentially}]\label{lem:trefethen2014}
  {\small  
 If $u(z)$ is analytic and satisfies  $|u(z)|\leq M$ in the annulus $r^{-1}<|z|<r$, for $r>1$, define
    \begin{align*}
        I=\int_{|z|=1}u(z)dz,\quad \textrm{and} \quad I_R=\frac{2\pi i}{
R}{\sum_{k=1}^R}z_ku(z_k),
    \end{align*}
    where 
    $z_k=\exp{2\pi ik/R}$. Then for any $R\geq 1$,
    \begin{align*}
       \left|I_R-I\right|\leq \frac{4\pi M}{r^R-1}.
 \end{align*}}
\end{theorem}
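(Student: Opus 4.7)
The plan is to exploit the Laurent expansion of $u$ in the annulus $r^{-1}<|z|<r$ together with the discrete orthogonality of the $R$-th roots of unity $z_k$. Writing $u(z)=\sum_{j\in\mathbbm{Z}}a_j z^j$, the series converges uniformly on compact subsets of the annulus, in particular on the unit circle, so term-by-term integration around $|z|=1$ collapses to the single contribution $I=2\pi i\,a_{-1}$.

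For $I_R$, I would substitute the Laurent series into the quadrature formula and interchange the infinite sum with the finite sum over $k$. The key identity
\begin{align*}
\frac{1}{R}\sum_{k=1}^{R}z_k^{\,m}=\begin{cases}1,&R\mid m,\\ 0,&\text{otherwise,}\end{cases}
\end{align*}
selects those Laurent modes whose index is congruent to $-1$ modulo $R$, yielding $I_R=2\pi i\sum_{m\in\mathbbm{Z}}a_{mR-1}$. The $m=0$ term matches $I$ exactly, so the aliasing error collapses to $I_R-I=2\pi i\sum_{m\neq 0}a_{mR-1}$.

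To bound this tail, I would apply Cauchy's coefficient estimate on concentric circles $|z|=\rho$ with $\rho\in(r^{-1},r)$, giving $|a_j|\leq M\rho^{-j}$; letting $\rho\uparrow r$ when $j\geq 0$ and $\rho\downarrow r^{-1}$ when $j<0$ produces the symmetric decay $|a_j|\leq M\,r^{-|j|}$. Splitting the sum over $m\neq 0$ into contributions from $m\geq 1$ and $m\leq -1$ and summing the two resulting geometric series in $r^{-R}$ yields a bound of the form $C\,M/(r^R-1)$, with the stated constant $4\pi$ obtained after aggregating the positive- and negative-index prefactors under the assumption $r>1$.

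Conceptually the argument is short: equispaced quadrature on the unit circle is exactly the discrete-orthogonality projector onto the Laurent modes of index $\equiv-1\pmod{R}$, and the exponential decay of $|a_j|$ in $|j|$ translates directly into exponential convergence in $R$. The only points requiring any care are justifying the interchange of the infinite Laurent sum with the finite quadrature sum—handled by uniform convergence on $|z|=1$—and combining the positive- and negative-index contributions symmetrically so that the final constant does not exceed $4\pi$; there is no genuinely hard step.
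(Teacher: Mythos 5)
Your overall strategy --- Laurent expansion, discrete orthogonality of the $R$-th roots of unity, the aliasing identity $I_R-I=2\pi i\sum_{m\neq 0}a_{mR-1}$, and the Cauchy estimates $|a_j|\le M r^{-|j|}$ --- is the standard proof of this result, and everything up to the final summation is correct. (Note that the paper itself offers no proof: the statement is imported as Theorem 2.2 of the cited survey, so there is no internal argument to compare against.) The gap is in the last step, where you claim the constant $4\pi$ emerges "after aggregating the positive- and negative-index prefactors under the assumption $r>1$." Carrying your own computation to the end: for $m\ge 1$ the index is $mR-1\ge 0$ and $\sum_{m\ge 1}r^{-(mR-1)}=r/(r^R-1)$, while for $m\le -1$ the index is $-|m|R-1<0$ and the corresponding sum is $\sum_{m\ge 1}r^{-(mR+1)}=r^{-1}/(r^R-1)$. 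Hence your argument yields $|I_R-I|\le 2\pi M\,(r+r^{-1})/(r^R-1)$, and since $r+r^{-1}>2$ for every $r>1$, this is strictly \emph{weaker} than the claimed $4\pi M/(r^R-1)$: the hypothesis $r>1$ works against you, not for you.

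This is not a defect you could have repaired by more care, because the inequality with constant $4\pi$ is false for the statement as transcribed here. Take $u(z)=z^{R-1}$, so that the sharp bound in the annulus is $M=r^{R-1}$, $I=0$, and $I_R=\frac{2\pi i}{R}\sum_{k}z_k^{R}=2\pi i$; then $|I_R-I|=2\pi$ exceeds $4\pi r^{R-1}/(r^R-1)$ whenever $r^{R-1}(r-2)>1$, e.g.\ $r=3$, $R=2$. The discrepancy is one of normalization: the clean constant $4\pi$ belongs to the trapezoidal rule for $\int_0^{2\pi}v(\theta)\,d\theta$ with $v(\theta)=u(e^{i\theta})$ bounded by $M$ on the strip $|\text{Im}\,\theta|<\log r$, i.e.\ to the integral $\int_{|z|=1}u(z)\,dz/(iz)$ with quadrature $\frac{2\pi}{R}\sum_k u(z_k)$. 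With the extra factor of $z$ appearing in the statement above, the relevant periodic function is $ie^{i\theta}u(e^{i\theta})$, whose bound on the strip is $Mr$ rather than $M$ --- exactly the loss your computation detects. So your proof is essentially right and the target is slightly wrong: either prove the bound $2\pi M(r+r^{-1})/(r^R-1)$ for the statement as written, or restate the theorem in the original normalization before claiming the $4\pi$ constant.
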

\subsection{Application to multivariate linear random-effects models}\label{subsec:multi.random}
Genetic covariance matrices are central to the study of
multivariate quantitative genetics, providing insights into how traits genetically covary with each other
due to pleiotropy and linkage disequilibrium. Accurate estimation of these matrices in the context of linear random effects models is fundamental to predicting organism response to
selection and understanding evolutionary limits. Nested random-effects models, in particular, have gained prominence for their applicability in improving selective breeding programs, notably within the poultry industry.

Consider a nested random-effects model with $n_s$ samples and $p$ traits measured for each sample. The observed traits $Y\in\mathbbm{R}^{n_s\times p}$ are modeled by a $k$-level Gaussian random-effects model
\begin{align}\label{eq:Y}
    Y=U_1\alpha_1+\ldots+U_k\alpha_k.
\end{align}
Above, each $\alpha_r\in\mathbbm{R}^{F_r\times p}$ represents the $r$-th level of random effect, with i.i.d. rows each distributed as $N(0,\Sigma_r)$. Each $U_r$ denotes the deterministic membership matrix that assigns each individual to the corresponding group of random effect in the $r$-th level. For instance, in the first level of randomness, if the $i$th individual is assigned to the $j$th group, then $U_1[i,j]=1$, and $U_1[i,l]=0$ when $l\neq j$. By the nested nature of the model, these membership matrices satisfy
\begin{align*}
&F_1\leq ...\leq F_k\leq n_s,\\
    &\text{col}(U_1)\subset \ldots \subset\text{col}(U_k).
\end{align*}
For simplicity, here we omit possible fixed effects. In this context, the genetic covariance of interest is a known linear combination $\Sigma_1,\ldots,\Sigma_k$ \cite{falconer1996introduction}. We are interested in estimating its eigenvalue distribution, which contains important information on evolutionary dynamics, e.g. null space dimension, largest eigenvalues, etc. To achieve this goal, one possible approach is to formulate parametric assumptions on the eigenvalue distributions of each $\Sigma_r$, and obtain method of moments estimators of these parameters based on the sum of squares matrix
\begin{align}\label{eq:nested.ori}
    B_p=\frac{1}{F_1}Y^TU_1(U_1^TU_1)^{-1}U_1^TY.
\end{align}
In particular, if we model the eigenvalue distributions of $\Sigma_1,\ldots,\Sigma_k$ as parameterized by $\tau_1,\ldots,\tau_l$ for a positive integer $l$, we can obtain method of moments estimators from a mapping between the parameters and moments of $B_p$ in the form of
\begin{align}\label{eq:mom.formula}
    (\hat{\tau}_1,\ldots,\hat{\tau}_l)=\mathcal{F}(\text{Tr}B_p,\ldots,\text{Tr}B_p^l).
\end{align}
We refer to a working manuscript \cite{xie2024} for the detailed expression of the mapping $\mathcal{F}$.

Here we show how $B_p$ may be written in the form (\ref{eq:Bn}) with $(n,N)=(p,F_1)$ so that Theorem \ref{thm:main} may be applied. By nature of membership matrices, we have that $U_1^TU_1$ is diagonal. Under the nested model, we further have that $U_1^TU_r$ has full row rank and that $U_1^TU_rU_r^TU_1$ is diagonal and positive definite. Additionally, considering a matrix $A\in\mathbbm{R}^{m_1\times m_2}$, $m_1<m_2$ and a standard Gaussian vector $x\in\mathbbm{R}^{m_2}$, there exists a standard Gaussian vector $z\in\mathbbm{R}^{m_1}$ such that  $Ax\deq (AA^T)^{1/2}z$.
As a result, by Gaussianity of the random effects, 
\begin{align*}
    (U_1^TU_1)^{-1/2}U_1^TU_r\alpha_r\deq L_rX_r\Sigma_r^{1/2},
\end{align*}
where $L_r=(U_1^TU_1)^{-1/2}(U_1^TU_rU_r^TU_1)^{1/2}$ is diagonal and positive definite, and $X_r\in\mathbbm{R}^{F_1\times p}$ has i.i.d. standard Gaussian entries. Thus, we can express the sum of squares matrix as
\begin{align}\label{eq:nested.sim.1}
    B_p\deq\frac{1}{F_1}{\sum_{r,s=1}^k\Sigma_{r}^{1/2}}X_{r}^TL_rL_sX_{s}\Sigma_{s}^{1/2}.
\end{align}
Let $T_{f}=\sum_{r=1}^kl_{rf}^2\Sigma_r$ and denote the $f$th row of $X_r$ by $x_{rf}$, then by the independence of the random effects there further exist i.i.d. standard Gaussian vectors 
$x_f\in\mathbbm{R}^p$ such that $\sum_{r=1}^kl_{rf}\Sigma_r^{1/2}x_{rf}\deq T_{f}^{1/2}x_f$. Therefore, we can alternatively express $B_p$ as
\begin{align}\label{eq:nested.sim.2}
    B_p&\deq\frac{1}{F_1}\sum_{f=1}^{F_1} T_{f}^{1/2}x_fx_f^TT_{f}^{1/2}, \quad T_{f}=\sum_{r=1}^kl_{rf}^2\Sigma_r.
\end{align}
Thus, from a central limit theorem on the LSS of $B_p$ such as the moments $(\text{Tr}B_p,\ldots ,\text{Tr}B_p^l)$, we can apply the Delta method to recover limiting results on the method of moments estimators from the mapping defined in (\ref{eq:mom.formula}). 
\subsubsection{Simulations under the full sibling design}\label{subsubsec:full.sib.sim}

Consider a full sibling design (\ref{eq:full.sib.design}) with $F=500$ families and $p=500$ gene expression traits measured for each individual. In each family, let the number of siblings $J_i$ be either 1 or 2 with equal probability. We model the eigenvalues of $\Sigma_A$, the covariance matrix of the family effects, to be exponentially decreasing, with $\sigma_i=\tau_1e^{-\tau_2 i}$, $i=1,\ldots,p$. For simplicity, take $\Sigma_E$, the covariance matrix of the individual effects, to be $\Sigma_E=\tau_e\text{Id}$. We take $\tau_1=1$, $\tau_2=0.3$ and $\tau_e=1$.

Based on the relationship outlined in (\ref{eq:mom.formula}), we derive the method of moments estimators $\hat{\tau}_1$, $\hat{\tau}_2$, and $\hat{\tau}_e$. These estimators are computed from the first and second moments of $B_p$ as specified in (\ref{eq:Bn.full.sib}), and the first moment of $D_p=(n_s-F)^{-1}Y^T(\text{Id}-\pi)Y\deq (n_s-F)^{-1}\sum_{i=1}^{n_s-F}\Sigma_E^{1/2}z_iz_i^T\Sigma_E^{1/2}$, where $\pi$ is defined in the same equation, and $z_i\in\mathbbm{R}^{p}$ are i.i.d. standard Gaussian vectors. \footnote{Obtaining the method of moments estimators directly from the first three moments of $B_p$ can be slightly more complicated computationally, resulting in longer runtime, and perhaps has higher variance than the choice made here.} We repeat the experiment $R=1000$ times from data generation. Figure \ref{fig:enter-label} shows histograms of the estimators. Denote the estimators by $\{\hat{\tau}^{(j)}=(\hat{\tau}_1^{(j)},\hat{\tau}_2^{(j)},\hat{\tau}_e^{(j)})\}_{j=1}^{R}$. The empirical biases $\Bar{\hat{\tau}}-\tau$ and standard deviations $(R-1)^{-1/2}\sqrt{\sum_{j=1}^{R}({\hat{\tau}^{(j)}}^2-\Bar{\hat{\tau}})^2}$ of the estimators are evaluated and presented in the first row of Table \ref{tab:table1}. To obtain the theoretical biases and standard deviations, we first apply Theorem \ref{thm:main} to the moments of $B_p$, $D_p$, which we denote by $\hat{\alpha}$. In other words, there exists $\Lambda_n$, $\mu_n'$ such that $\Lambda_n'^{-1/2}\left[n(\hat{\alpha}-\deteqv{\alpha})-{\mu}_n'\right]\rightarrow N(0,\Id_3)$, where $\deteqv{\alpha}$ is computed from the deterministic equivalent matrices $\deteqv{B}_p$, $\deteqv{D}_p$. Applying the Delta method for the mapping $\mathcal{F}$ defined in (\ref{eq:mom.formula}), we further have $
    \Lambda_n^{-1/2}\left[n(\hat{\tau}-\tau)-{\mu}_n\right]\rightarrow N(0,\Id_3),$
where $\tau=(\tau_1,\tau_2,\tau_e)$, $\Lambda_n=J_{\mathcal{F}}(\deteqv{\alpha})\Lambda_n'J_{\mathcal{F}}(\deteqv{\alpha})^T$, ${\mu}_n=J_{\mathcal{F}}(\deteqv{\alpha})\mu_n'+n({\mathcal{F}}(\deteqv{\alpha})-\tau)$, with $J_{\mathcal{F}}(\deteqv{\alpha})$ representing the Jacobian of mapping ${\mathcal{F}}$ evaluated at $\deteqv{\alpha}$. The resulting expressions $\Lambda_n$ and $\mu_n$ are further numerically evaluated using the techniques introduced in Section \ref{subsec:numeric}. Upon comparison, it is evident that the empirical and theoretical values closely align.

\begin{figure}
    \centering
    \includegraphics[trim={0cm 0cm 0cm 1cm},clip,width=\linewidth]{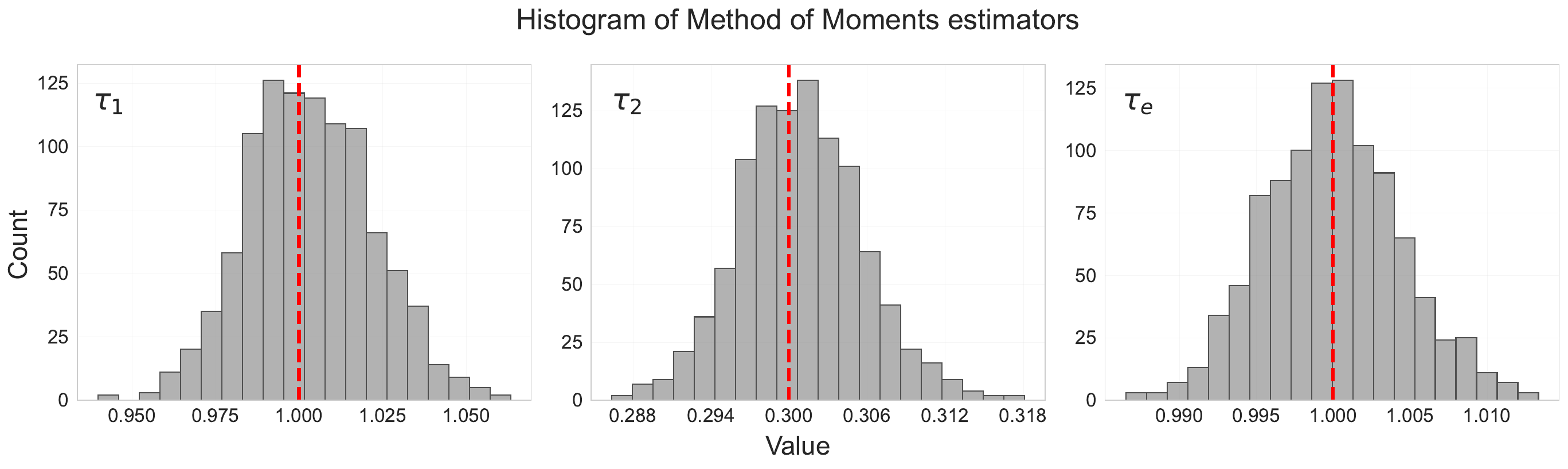}
    \caption{Histograms of the method of moments estimators for $\tau_1$, $\tau_2$ and $\tau_e$, which parameterize the eigenvalues of $\Sigma_A$ and $\Sigma_E$.}
    \label{fig:enter-label}
\end{figure}
\begin{table}[htb]
  \begin{center}
    \caption{Biases and standard deviations of method of moments estimators.
    }
    \label{tab:table1}
    \vspace{0.1cm}
    \begin{tabular}{S S S S S S S}
    \toprule
      \multicolumn{1}{c }{}& \multicolumn{3}{c}{Bias} &  \multicolumn{3}{c}{2 Standard Deviation}  \\ 
      \cmidrule(r){2-4}\cmidrule(r){5-7}
      \multicolumn{1}{c}{} & \multicolumn{1}{c}{$\tau_1$} & $\text{$\tau_2$}$& $\text{$\tau_e$}$& \multicolumn{1}{c}{$\tau_1$} & $\text{$\tau_2$}$& $\text{$\tau_e$}$\\ 
      \hline
      $\text{Empirical}$ &0.0033 &0.0009& -0.0001&0.0376& 0.0095& 0.0088\\ 
      $\text{Theoretical}$ &0.0032  &0.0009 & -0.0003&0.0380& 0.0084& 0.0085\\
      \bottomrule
    \end{tabular}
  \end{center}
\end{table}
\section{Preliminaries and tools}\label{sec:prelim}
\subsection{Deterministic equivalent spectral distribution}
To characterize the empirical spectral distribution $F^{B_n}$ of $B_n$, we study its deterministic equivalent matrix, $\deteqv{B}_n$. Here, equivalence is in the sense that $F^{B_n}-F^{\deteqv{B}_n}\rightarrow 0$ weakly almost surely. Modifying the construction proposed in \cite{fan2017eigenvalue}, define $\deteqv{B}_n$ as 
\begin{align}\label{eq:def.deteqv}
    \deteqv{B}_n=-z\sum_{r=1}^k\deteqv{g}_1^{(r)}\Sigma_r.
\end{align}
Here, $\deteqv{g}_1^{(r)}$, $r=1,\ldots,k$, are determined by the following system of equations:
\begin{align}
\begin{cases}
    z\deteqv{g}_1^{(r)}&=-\frac{1}{N}\text{Tr}\left(({\sum_{s=1}^k}\deteqv{g}_2^{(s)}L_s^2+\text{Id})^{-1}L_r^2\right),r=1,\ldots,k\\
    z\deteqv{g}_2^{(r)}&=-\frac{1}{N}\text{Tr}\left(({\sum_{s=1}^k}\deteqv{g}_1^{(s)}\Sigma_s+\text{Id})^{-1}\Sigma_r\right),r=1,\ldots,k.
    \end{cases}\label{eq:m0}
\end{align}
The detailed construction is deferred to Section \ref{sec:comp.stieltjes} in the supplementary appendices. 
Thus, the Stieltjes transform $\deteqv{m}_n$ may be equivalently expressed as:
\begin{align}
\deteqv{m}_n&=\frac{1}{n}\text{Tr}(\deteqv{B}_n-z\text{Id})^{-1},\nonumber\\
z\deteqv{m}_n&=-\frac{1}{n}\text{Tr}\left(\sum_{r=1}^k\deteqv{g}_1^{(r)}\Sigma_r+\text{Id} \right)^{-1},\label{eq:mn.deteqv.g1}\\
    z\deteqv{m}_n&=-1-\frac{N}{n}z\sum_{r=1}^k \deteqv{g}_1^{(r)}\deteqv{g}_2^{(r)},\nonumber\\
    z\deteqv{m}_n&=-1+\frac{N}{n}-\frac{1}{n}\text{Tr}\left(1+\sum_{r=1}^kL_r^2\deteqv{g}_2^{(r)}\right)^{-1}.\nonumber
\end{align}
Similarly, for $B_n$, define:
\begin{align}
\begin{cases}\label{eq:random.stiel}
m_n&=\frac{1}{n}\text{Tr}(B_n-z\text{Id})^{-1},\\
z{g}_1^{(r)}&=-\frac{1}{N}\text{Tr}\left(({\sum_{s=1}^k}{g}_2^{(s)}L_s^2+\text{Id})^{-1}L_r^2\right),r=1\ldots,k\\
{g}_2^{(r)}&=\frac{1}{N}\text{Tr}\left((B_n-z\text{Id})^{-1}\Sigma_r\right),r=1\ldots,k.
\end{cases}
\end{align}
Adapting the proof of Theorem 1.2 from \cite{fan2017eigenvalue}, we establish the following equivalence between $B_n$ and $\deteqv{B}_n$.
\begin{lemma}\label{lem:det.equiv}
    Under Assumption \ref{assump}, as $n, N\rightarrow\infty$, for each $z\in\mathbbm{C}^+$, $i=1,2$, and $r=1,\ldots,k$, 
    \begin{align*}
    m_{n}-\deteqv{m}_{n}&\rightarrow 0,\\
       {g}_i^{(r)}-\deteqv{g}_i^{(r)}&\rightarrow 0
    \end{align*}
    pointwise almost surely.
\end{lemma}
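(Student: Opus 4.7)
\textbf{Proof proposal for Lemma \ref{lem:det.equiv}.} The plan is to adapt the three-step strategy used in Fan--Johnstone \cite{fan2017eigenvalue} for the analogous fixed-point characterization of $\deteqv{B}_n$, now including the scalar Stieltjes transform $m_n$ as well. The argument proceeds by (a) showing that each random quantity $g_i^{(r)}$ and $m_n$ concentrates around its expectation, (b) verifying that these expectations satisfy a perturbed version of the system (\ref{eq:m0}) whose residuals are $o(1)$, and (c) invoking stability/uniqueness of the fixed-point map to deduce convergence to the deterministic solution $\deteqv{g}_i^{(r)}$ and hence to $\deteqv{m}_n$.

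For concentration I would exploit the Gaussianity of the $x_j$ directly via the logarithmic Sobolev inequality on $\mathbbm{R}^{nN}$. Theorem 1.1 of \cite{Guionnet2000}, which the paper already invokes for the tightness of $M_n^1$, gives exponential concentration of Lipschitz functionals of $(x_1,\ldots,x_N)$; applied to $\frac{1}{N}\text{Tr}((B_n-zI)^{-1}\Sigma_r)$ for $z\in\mathbbm{C}^+$ with $\text{Im}\,z$ bounded away from zero, this yields $g_2^{(r)}-\E g_2^{(r)}\to 0$ almost surely by a Borel--Cantelli argument, and analogously $m_n-\E m_n\to 0$. Since $g_1^{(r)}$ is a Lipschitz function of $(g_2^{(1)},\ldots,g_2^{(k)})$ through the algebraic relation in (\ref{eq:random.stiel}), concentration of $g_1^{(r)}$ around its expectation follows.

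Next, I would show that the expectations $\E g_1^{(r)}$ and $\E g_2^{(r)}$ satisfy the system (\ref{eq:m0}) up to vanishing residuals. The standard route is the resolvent identity combined with Gaussian integration by parts applied column-wise in $x_j$. Writing $B_n$ as in (\ref{eq:Bn}) and expanding $(B_n-zI)^{-1}$ via the Sherman--Morrison formula around the leave-one-out matrices $D_j=B_n-N^{-1}T_j^{1/2}x_jx_j^TT_j^{1/2}$, one obtains an expression for $\E\,\text{Tr}((B_n-zI)^{-1}\Sigma_r)$ in terms of quadratic forms $x_j^TT_j^{1/2}D_j^{-1}T_j^{1/2}x_j$. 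The concentration of step (a), together with standard rank-one perturbation bounds $\lVert D_j^{-1}-(B_n-zI)^{-1}\rVert\leq |\text{Im}\,z|^{-2}\cdot O(1/N)$, allows replacement of these quadratic forms by normalized traces of $T_jD_j^{-1}$. Expanding $T_j=\sum_{s=1}^kl_{js}^2\Sigma_s$ and grouping the $k$ levels produces a $2k\times 2k$ perturbed system whose residuals are uniformly $o(1)$ on compact subsets of $\mathbbm{C}^+$; this system coincides in the limit with (\ref{eq:m0}).

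Finally, I would close the argument by appealing to the uniqueness of the solution to (\ref{eq:m0}) within the regime $\deteqv{g}_1^{(r)},\deteqv{g}_2^{(r)}\in\mathbbm{C}^+$, as established in \cite{fan2017eigenvalue}, together with continuity of the fixed-point map. This forces $\E g_i^{(r)}\to\deteqv{g}_i^{(r)}$ pointwise, and combining with step (a) gives the almost-sure convergence claimed; the convergence $m_n-\deteqv{m}_n\to 0$ follows from the third identity in (\ref{eq:mn.deteqv.g1}) applied on both sides. The main obstacle is step (c): with $k$ levels of variation, (\ref{eq:m0}) is a coupled $2k$-dimensional fixed point rather than a scalar self-consistent equation, so uniqueness and continuity must be verified in this vector setting. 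This is exactly where the framework of \cite{fan2017eigenvalue} is essential, and we rely on their stability analysis to complete the argument.
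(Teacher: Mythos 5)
Your route is genuinely different from the paper's. The paper does not go through a self-consistent-equation/stability argument at all: it maps $B_n$ back to the nested random-effects form, invokes Theorem 1.2 of \cite{fan2017eigenvalue} directly for $m_n-\deteqv{m}_n\to 0$, and then establishes ${g}_2^{(r)}-\deteqv{g}_2^{(r)}\to 0$ by rerunning Fan--Johnstone's operator-valued free probability machinery (Haar conjugation, rectangular probability spaces, the $\mathcal{H}$-valued Cauchy transform, and their asymptotic-freeness Theorem 3.10) to match the power-series coefficients of $\deteqv{g}_2^{(r)}$ and ${g}_2^{(r)}$ in $z^{-(l+1)}$ for $\mathrm{Im}\,z$ large, followed by a normal-families extension to all of $\mathbbm{C}^+$; the statement for $g_1^{(r)}$ then falls out because it is an exact linear function of the $g_2^{(s)}$ by (\ref{eq:random.stiel}). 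Your leave-one-out/concentration approach is the more classical Bai--Silverstein-style alternative and, if completed, would be more self-contained and arguably more elementary.

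However, there is a genuine gap at exactly the point you flag as the main obstacle, step (c). You propose to ``rely on their stability analysis,'' but \cite{fan2017eigenvalue} does not supply a perturbation-stability result for the coupled $2k$-dimensional system (\ref{eq:m0}) --- i.e., a statement that any approximate solution with $o(1)$ residuals must be $o(1)$-close to $(\deteqv{g}_1^{(r)},\deteqv{g}_2^{(r)})$. What they prove is existence/uniqueness and convergence of the iteration (via a holomorphic fixed-point argument), and they prove their Theorem 1.2 by free probability, not by stability. So the deferral points to a result that is not in the cited reference; you would need to prove stability yourself, e.g.\ by showing the fixed-point map is a strict contraction for $\mathrm{Im}\,z$ sufficiently large (where it is, by crude norm bounds) and then extending the resulting convergence to all of $\mathbbm{C}^+$ by Vitali/Montel, using that the $g_i^{(r)}$ and $\deteqv{g}_i^{(r)}$ are uniformly bounded analytic functions on $\{\mathrm{Im}\,z>\epsilon\}$. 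A secondary, fixable issue is in step (a): Theorem 1.1 of \cite{Guionnet2000} concerns concentration of linear spectral statistics $\mathrm{Tr}\,f(B_n)$, whereas $\frac{1}{N}\mathrm{Tr}((B_n-zI)^{-1}\Sigma_r)$ is not a function of the eigenvalues of $B_n$ alone; you should instead use either the general Gaussian log-Sobolev/Herbst inequality for this Lipschitz functional of $(x_1,\ldots,x_N)$, or an Azuma-type martingale difference bound based on the rank-one perturbation inequality (Lemma \ref{lemma:bound.trA}), which gives increments of order $N^{-1}$ and hence Borel--Cantelli summability.
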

\subsection{Truncation of $x_{ji}$}
\begin{lemma}\label{lem:trunc}
    Under Assumption \ref{assump}, there exist $\delta_n$ and $\hat{x}_{ji}$, $j\in [N]$, $i\in[n]$, such that 
    \begin{enumerate}
        \item $|\hat{x}_{ji}|<\delta_n\sqrt{n}$
        \item $\E [\hat{x}_{ji}]=0$, $\E |\hat{x}_{ji}|^2=1$, $\E |\hat{x}_{ji}|^4=3+o(1)$
        \item $\delta_n\rightarrow 0$, $n\delta_n^4\rightarrow\infty$
        \item Let $\hat{x}_j=(\hat{x}_{ji})_{i=1}^n$, $\hat{B}_n=\frac{1}{N}\sum_{j=1}^NT_{j}^{1/2}\hat{x}_j\hat{x}_j^TT_{j}^{1/2}$, $\hat{G}_n=n \left(F^{\hat{B}_n}(x)-F^{\deteqv{B}_n}(x)\right)$.
        For any function $f$ on $\mathbbm{R}$ analytic on an open interval containing (\ref{eq:open.int}),
        \begin{align*}
            \int fdG_n(x)=\int fd\hat{G}_n(x)+o_p(1),
        \end{align*}
        where $o_p(1)$ represents convergence in probability to 0.
    \end{enumerate}
\end{lemma}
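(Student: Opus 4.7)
The plan is the standard Bai--Silverstein truncation-and-renormalization argument, exploiting the rapid Gaussian tail decay. I would pick a sequence $\delta_n \to 0$ satisfying both $n\delta_n^4 \to \infty$ and $n\delta_n^2/\log n \to \infty$ (e.g.\ $\delta_n = n^{-1/6}$), set the threshold $\tau_n = \delta_n\sqrt{n}/2$, and define
\[
\tilde{x}_{ji} = x_{ji}\,\mathbbm{1}\{|x_{ji}|\le \tau_n\}, \qquad \hat{x}_{ji} = \tilde{x}_{ji}/\sigma_n, \qquad \sigma_n^2 := \mathrm{Var}(\tilde{x}_{ji}).
\]
By symmetry of the Gaussian distribution, $\E\tilde{x}_{ji} = 0$ so no recentering is needed, and the factor $1/2$ in $\tau_n$ ensures $|\hat{x}_{ji}| < \delta_n\sqrt{n}$ once $n$ is large enough that $\sigma_n > 1/2$.

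Properties (1)--(3) then reduce to standard Gaussian tail estimates. Using $\E[x_{11}^{2k}\mathbbm{1}\{|x_{11}|>\tau_n\}] = O(\tau_n^{2k-1}e^{-\tau_n^2/2})$, one verifies $\sigma_n^2 = 1 - o(1)$ and $\E|\tilde{x}_{ji}|^4 = 3 - o(1)$, so that $\E|\hat{x}_{ji}|^4 = \E|\tilde{x}_{ji}|^4/\sigma_n^4 = 3 + o(1)$. Property (3) is built into the choice of $\delta_n$.

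For the key LSS comparison in (4), I would condition on the event
\[
A_n \;=\; \{|x_{ji}|\le \tau_n \text{ for all }j\in[N],\,i\in[n]\},
\]
whose complement has probability $P(A_n^c) \le 2Nn\exp(-n\delta_n^2/8) \to 0$ by the union bound. On $A_n$, $\hat{x}_j = x_j/\sigma_n$, hence $\hat{B}_n = \sigma_n^{-2}B_n$, and their eigenvalues satisfy $\hat{\lambda}_i = \sigma_n^{-2}\lambda_i$. Consequently
\[
\int f\,dG_n - \int f\,d\hat{G}_n \;=\; \sum_{i=1}^n\bigl[f(\lambda_i) - f(\sigma_n^{-2}\lambda_i)\bigr].
\]
A classical Bai--Yin-type bound (valid under Assumption \ref{assump}) places every $\lambda_i$ and $\sigma_n^{-2}\lambda_i$ inside a fixed compact subset of the open interval of analyticity of $f$ with probability tending to one, on which $f$ has a finite Lipschitz constant $L$. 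The sum is then $O(n\,|1-\sigma_n^{-2}|)$, and since $|1-\sigma_n^{-2}|$ is super-polynomially small in $n$ (of order $\tau_n e^{-\tau_n^2/2}$), this is $o(1)$, giving the desired $o_p(1)$ estimate.

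The only slightly delicate step is the eigenvalue-containment bound. I would derive $\|B_n\| \le k^2(1+\sqrt{C})^2\mathrm{s}_L^2\mathrm{s}_\Sigma^2 + o(1)$ with high probability from the triangle inequality $\sqrt{N\|B_n\|} \le k\,\mathrm{s}_L\,\mathrm{s}_\Sigma\,\max_r\|X_r\|$ applied to the representation (\ref{eq:Bn.2}), combined with the standard Gaussian operator-norm bound $\|X_r\|/\sqrt{N} \to 1+\sqrt{n/N}$. Once this is in hand, the rest of the argument is routine bookkeeping, and everything above goes through.
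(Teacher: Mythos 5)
Your proposal is correct and follows essentially the same route as the paper: truncate at level $\delta_n\sqrt{n}$, renormalize to unit variance, control the truncation event by a union bound using Gaussian tails, and control the renormalization by a Lipschitz/eigenvalue-perturbation bound combined with the almost-sure spectral-norm bound on $B_n$. Your explicit choice $\delta_n=n^{-1/6}$ replaces the paper's diagonal construction of $\delta_n$ (which is designed for general fourth-moment assumptions and is overkill here), and your factor $1/2$ in the threshold cleanly guarantees $|\hat{x}_{ji}|<\delta_n\sqrt{n}$ after dividing by $\sigma_n<1$, a point the paper glosses over.
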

Building upon the above lemma, without loss of generality, we shall subsequently work under the following set of assumptions:
\begin{assumption}\label{assump_trunc}
\begin{enumerate}
\item[1'] $x_{ji}$, $j\in [N]$, $i\in[n]$, are i.i.d. random variables satisfying properties (1-3) in Lemma \ref{lem:trunc}.
    \item[2-4] Remain consistent with Assumption \ref{assump}.
\end{enumerate}
\end{assumption}
After truncation, we derive a crucial concentration result that forms the basis for our limiting covariance calculation.
\begin{lemma}[Concentration]\label{lem:concen}
   Under Assumption \ref{assump_trunc}, for $x_1=[x_{11},...,x_{1n}]^T$ and non-random $n\times n$ matrices $B_l$, $l=1,...,q$, we have that for $q\geq 2$,
\begin{align}\label{eq:martingale.concentr}
    \E \left|{\prod_{l=1}^q}(N^{-1}x_1^TB_lx_1-N^{-1}\text{Tr}B_l)\right|\leq KN^{-1}\delta_n^{2q-4}\prod_{l=1}^q\lVert B_l\rVert,
\end{align}
where $K$ is some constant that depends on $q$. 
\end{lemma}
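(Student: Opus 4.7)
The plan is to split the expectation via H\"older's inequality and then control each factor with a standard moment bound for centered quadratic forms, exploiting the truncation in Assumption \ref{assump_trunc} to manufacture the $\delta_n^{2q-4}$ factor.

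Setting $Y_l = N^{-1}(x_1^T B_l x_1 - \Tr B_l)$, I would first apply H\"older's inequality with equal weights to obtain $\E \prod_{l=1}^q |Y_l| \leq \prod_{l=1}^q (\E|Y_l|^q)^{1/q}$, which reduces the problem to bounding each $\E|Y_l|^q$ individually. Note that H\"older is agnostic to the fact that all $Y_l$'s share the same $x_1$, which is the correct level of coarseness here since any finer decomposition would not improve the stated rate. Next, I would invoke the classical Bai--Silverstein quadratic-form moment inequality (their Lemma~9.1): for independent mean-zero, unit-variance entries with finite $2q$-th moment and any deterministic $n\times n$ matrix $B$,
$$\E|x_1^T B x_1 - \Tr B|^q \leq C_q\bigl[(\E|x_{11}|^4)^{q/2}(\Tr BB^*)^{q/2} + \E|x_{11}|^{2q}\,\Tr(BB^*)^{q/2}\bigr].$$
Combining $\Tr BB^* \leq n\|B\|^2$, $\Tr(BB^*)^{q/2}\leq n\|B\|^q$, with the truncation-driven bound $\E|x_{11}|^{2q}\leq (\delta_n\sqrt n)^{2q-4}\E|x_{11}|^4 = O(\delta_n^{2q-4} n^{q-2})$ from Lemma \ref{lem:trunc}, this produces
$$\E|x_1^T B_l x_1 - \Tr B_l|^q \leq C_q'\bigl[n^{q/2}\|B_l\|^q + \delta_n^{2q-4} n^{q-1}\|B_l\|^q\bigr].$$

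Dividing by $N^q$, using $n/N\asymp 1$ from Assumption \ref{assump}.2, and taking $q$-th roots gives
$(\E|Y_l|^q)^{1/q} \leq C_q''\bigl[N^{-1/2} + N^{-1/q}\delta_n^{(2q-4)/q}\bigr]\|B_l\|$. A short calculation using $n\delta_n^4\to\infty$ shows that $\delta_n^2\gg N^{-1/2}$, from which one checks that for every $q\geq 2$ the second summand dominates the first; equivalently, the ratio $N^{-1/q}\delta_n^{(2q-4)/q}/N^{-1/2} = (N^{1/2}\delta_n^2)^{(q-2)/q}\cdot N^{0}$ is $\gtrsim 1$. Hence $(\E|Y_l|^q)^{1/q}\lesssim N^{-1/q}\delta_n^{(2q-4)/q}\|B_l\|$, and multiplying over $l=1,\ldots,q$ yields exactly $K N^{-1}\delta_n^{2q-4}\prod_{l=1}^q\|B_l\|$ for a constant $K$ depending only on $q$.

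The argument is essentially routine once the right decomposition is chosen. The only point that requires care is arranging the exponents: H\"older at exponent $q$ combined with the truncation level $\delta_n\sqrt n$ entering the $2q$-th moment produces precisely the $\delta_n^{2q-4}$ and $N^{-1}$ scaling of the statement. There is no serious obstacle beyond careful bookkeeping of these two powers and verifying the dominance condition between the two terms in the moment bound.
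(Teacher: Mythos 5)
Your proposal is correct and follows essentially the same route as the paper: Hölder's inequality to reduce to $q$-th moments of a single centered quadratic form, the Bai--Silverstein quadratic-form moment inequality (the paper cites it as Lemma 2.7 of \cite{baisil1998}), the truncation bound $\nu_{2q}\leq \delta_n^{2q-4}n^{q-2}\nu_4$, and the observation that the $\delta_n^{2q-4}n^{q-1}$ term dominates because $n\delta_n^4\to\infty$. The exponent bookkeeping in your dominance check is accurate, so there is nothing to add.
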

\subsection{Concentration of extreme eigenvalues}
Recall that $B_n$ defined in (\ref{eq:Bn}) can be equivalently represented as (\ref{eq:Bn.2}).
Under Assumption 3.3, applying Theorem 3.1 in \cite{baisil1998}, we have that for each $r=1,\ldots, k$,
\begin{align*}
  N^{-1/2} \lVert X_r\rVert_2 \leq 1+\sqrt{C}\quad a.s.
\end{align*}
As a result, we get
\begin{align}
    \sqrt{\lambda_{\text{max}}(B_n)}
    &=\frac{1}{\sqrt{N}}\lVert \sum_{r=1}^kL_rX_r\Sigma_r^{1/2}\rVert_2\\
    &\leq \frac{1}{\sqrt{N}}\sum_{r=1}^k\lVert L_r \rVert_2 \lVert X_r\rVert_2 \lVert\Sigma_r^{1/2} \rVert_2 \label{eq:eigen.max.bd}\\
     &\leq (1+\sqrt{C})ks_Ls_{\Sigma}\text{ a.s.},
\end{align}
where $s_L$ and $s_{\Sigma}$ are defined in Section \ref{subsec:model.setup}. From this, the upper bound on the largest eigenvalue of $B_n$ is given by
\begin{align}
    \lambda_{\text{max}}(B_n)
    &\leq (1+\sqrt{C})^2k^2s_L^2s_{\Sigma}^2\text{ a.s.}.\label{eq:concen.bn}
\end{align}
Furthermore, we establish the following concentration result.
\begin{lemma}[Concentration of extreme eigenvalues of $B_n$]\label{lemma: concen.eig.sofs}
    Define $C_u:= (1+\sqrt{C})^2k^2s_L^2s_{\Sigma}^2$. Under Assumption \ref{assump_trunc}, for any $\delta>0$, positive integer $k>0$,
    \begin{align*}
        \mathbbm{P}(\lambda_{\max}(B_n)\geq C_u+\delta)\leq Cp^{-k}.
    \end{align*}
\end{lemma}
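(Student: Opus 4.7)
The plan is to upgrade the almost-sure bound (3.7)--(3.8) to a polynomial-rate tail bound by combining the same operator-norm decomposition with a quantitative concentration inequality for each $\|X_r\|_2$.  Retaining (3.6),
$$\sqrt{\lambda_{\max}(B_n)}\;\leq\; \frac{1}{\sqrt{N}}\sum_{r=1}^{k}\|L_r\|_2\,\|X_r\|_2\,\|\Sigma_r^{1/2}\|_2,$$
it suffices, for a fixed $\eta=\eta(\delta)>0$ chosen so that $k^{2}\mathrm{s}_L^{2}\mathrm{s}_\Sigma^{2}(1+\sqrt{C}+\eta)^{2}\leq C_u+\delta$, to show that $N^{-1/2}\|X_r\|_2\leq 1+\sqrt{C}+\eta$ holds for every $r=1,\dots,k$ simultaneously with probability at least $1-Cp^{-k}$; a union bound over the $k$ fixed indices then yields the lemma.

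For each individual $r$, I would use that the spectral norm of $X_r$ is a convex, $1$-Lipschitz function of its entries in the Frobenius metric, and that under Assumption \ref{assump_trunc} those entries are independent, mean-zero, unit-variance and bounded in magnitude by $\delta_n\sqrt{n}$.  Talagrand's concentration inequality for convex Lipschitz functions of bounded independent variables then gives
$$\mathbbm{P}\!\left(\left|\|X_r\|_2 - \mathbbm{E}\|X_r\|_2\right| \geq \tfrac{\eta}{2}\sqrt{N}\right)\;\leq\;4\exp\!\left(-\frac{c\,\eta^{2}\,N}{\delta_n^{2}\,n}\right)$$
for an absolute constant $c>0$.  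Combined with the classical edge bound $\mathbbm{E}\|X_r\|_2/\sqrt{N}\leq 1+\sqrt{n/N}+o(1)\leq 1+\sqrt{C}+\eta/2$ (available from a Gordon--Chevet-type estimate, or from the Marchenko--Pastur edge together with uniform integrability of $N^{-1/2}\|X_r\|_2^{2}$), the exceptional event has probability $4\exp(-c'\eta^{2}/\delta_n^{2})$.  Since $\delta_n\to 0$, this tail decays faster than any inverse polynomial in $p$, so a further union bound over $r=1,\dots,k$ supplies the required $Cp^{-k}$ bound.

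The main obstacle concerns the decay rate of $\delta_n$: Lemma \ref{lem:trunc} only asserts $\delta_n\to 0$ with $n\delta_n^{4}\to\infty$, whereas the Talagrand bound needs $1/\delta_n^{2}$ to exceed $k\log p$ eventually for the argument to deliver a polynomial rate.  This is not genuinely restrictive, because the pre-truncation $x_{ji}$ are Gaussian, so that $\max_{j,i}|x_{ji}|=O(\sqrt{\log nN})$ almost surely; one may therefore realize the truncation with $\delta_n=n^{-\alpha}$ for any small $\alpha\in(0,1/4)$, which satisfies all conditions of Lemma \ref{lem:trunc} and yields an exceptional probability $\exp(-c\eta^{2}n^{2\alpha})$ that is super-polynomially small in $p$.
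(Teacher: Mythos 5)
Your proof is essentially correct, but it replaces the paper's key probabilistic input with a different one. The paper keeps the same deterministic decomposition \eqref{eq:eigen.max.bd} and union bound over $r=1,\dots,k$, but then simply cites property (1.9a) of \cite{baisilverstein2004}, which already gives $\mathbbm{P}(\lambda_{\max}(N^{-1}X_rX_r^T)>\eta)=o(p^{-l})$ for any $\eta>(1+\sqrt{C})^2$ and any $l$ under the truncation of Assumption \ref{assump_trunc}; that bound is proved by the moment method (high powers of traces) and holds for \emph{any} admissible $\delta_n\to 0$. You instead derive the per-matrix tail bound from Talagrand's inequality for convex Lipschitz functions plus a first-order bound on $\|X_r\|_2$, which is more self-contained and elementary, at the cost of two caveats. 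First, the Gordon--Chevet bound $\E\|X_r\|_2\leq\sqrt{N}+\sqrt{n}$ is a Gaussian fact and does not apply verbatim to the truncated, rescaled entries; the cleanest repair is to center Talagrand at the \emph{median} of $N^{-1/2}\|X_r\|_2$ and note that the median is eventually below $1+\sqrt{C}+\eta/2$ because $N^{-1/2}\|X_r\|_2\to 1+\sqrt{n/N}$ almost surely (the Bai--Yin/\cite{baisil1998} input the paper already invokes above the lemma), which sidesteps the uniform-integrability argument you sketch. Second, you correctly identify that the Talagrand route needs $\delta_n^{-2}\gtrsim k\log p$, which Assumption \ref{assump_trunc} does not guarantee; your fix of realizing the truncation with $\delta_n=n^{-\alpha}$, $\alpha\in(0,1/4)$, is legitimate for Gaussian data and compatible with Lemma \ref{lem:trunc}, but it means your argument proves the lemma only for a suitably chosen truncation level, whereas the paper's citation covers every $\delta_n$ permitted by the assumption. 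In short: same skeleton, different engine --- yours is more hands-on and quantitative, the paper's is shorter and rate-agnostic in $\delta_n$.
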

\begin{proof}
    Applying property (1.9a) of \cite{baisilverstein2004}, we have that for any $k$, $\eta_k>(1+\sqrt{C})^2$, 
    \begin{align*}
        \mathbbm{P}\left(\lambda_{\max}\left(\frac{1}{N}X_rX_r^T\right)>\eta_k \text{ for any }1\leq r\leq k\right)=o(p^{-l}).
    \end{align*}
    Taken together with equation (\ref{eq:eigen.max.bd}), we conclude the proof.
\end{proof}
\section{Convergence of centralized sequence $M_n^1$}\label{sec:conv.mn1}
Let us write $M_n^1$ as the scaled centralized Stieltjes transform
\begin{align*}
    M_n^1=\text{Tr}(B_n-z)^{-1}-\E \text{Tr}(B_n-z)^{-1}=n(m_n-\E [m_n]).
\end{align*}
Let $r_j=N^{-1/2}T_{j}^{1/2}x_j$, then we can express $B_n$ as
\begin{align*}
    B_n=\sum_{j=1}^Nr_jr_j^T.
\end{align*}
Introduce 
\begin{align}
    &D(z)=B_n-zI,\quad D_j(z)=D(z)-r_jr_j^T,\quad B_{nj}=B_n-r_jr_j^T,\label{eq:d.dj}\\
   &\epsilon_j(z)=r_j^TD_j^{-1}(z)r_j-N^{-1}\text{Tr}(D_j^{-1}(z)T_j),\quad \gamma_j(z)=r_j^TD_j^{-2}(z)r_j-N^{-1}\text{Tr}(D_j^{-2}(z)T_j)\label{eq:eps}\\
    &\beta_j(z)=\frac{1}{1+r_j^TD_j^{-1}(z)r_j},\quad \Tilde{\beta}_j(z)=\frac{1}{1+N^{-1}\text{Tr}(D_j^{-1}(z)T_j)},\\
    &b_j(z)=\frac{1}{1+N^{-1}\E [\text{Tr}(D_j^{-1}(z)T_j)]},\quad
    \psi_j(z)=\frac{1}{1+N^{-1}\E [\text{Tr}(D^{-1}(z)T_j)]}\label{eq:def.psi}\\
    &R_j(z)=zI-\frac{1}{N}\sum_{i\neq j}\psi_i(z)T_i,\quad R=\frac{1}{N}\sum_{j=1}^N\psi_j(z)T_j-zI.\label{eq:def.R}
\end{align}
It is straightforward to verify that $\gamma_j(z)=d\epsilon_j(z)/dz$. From the spectral decomposition of $D(z)$ and $D_j(z)$, it follows that
\begin{align}\label{eq:d.dj.opnorm}
    \lVert D(z)\rVert,  \lVert D_j(z)\rVert\leq v^{-1},
\end{align}
where $v=\text{Im}z$.
Based on the concentration result established in Lemma \ref{lem:concen}, we further have
\begin{lemma}\label{lem:Mn1.prelim}
    Under Assumption \ref{assump_trunc}, for any $z\in\mathbbm{C}^+$ with $\textrm{Im}z>v$, $v<1$, the following statements hold true:
    \begin{enumerate}
        \item Boundedness:
        \begin{gather}
            |\beta_j|,|\Tilde{\beta}_j|,|b_j|,|\psi_j|\leq |z|v^{-1},\quad |b_j(z)-\psi_j(z)|\leq CN^{-1}|z|^4v^{-7}\label{eq:bd.beta}\\
            \lVert R_j^{-1}(z)\rVert,\lVert R^{-1}(z)\rVert\leq v^{-1}\label{eq:norm.R.inv}
        \end{gather}
        \item Limit of $b_j(z)$, $\psi_j(z)$: let $\deteqv{b}_j(z)=(1+\sum_{r}l_{rj}^2\deteqv{g}_2^{(r)}(z))^{-1}$, then
        \begin{align}\label{eq:bk.lim}
   \left|b_j(z)-\deteqv{b}_j(z)\right|=o(1),\quad\left|\psi_j(z)-\deteqv{b}_j(z)\right|=o(1).
\end{align}
        \item  Concentration of $|\Tilde{\beta}_j(z)-b_j(z)|$:
        \begin{align}\label{eq:beta.tilde.b}
            \E |\Tilde{\beta}_j(z)-b_j(z)|^2\leq C|z|^4v^{-6}N^{-1}
        \end{align}
        \item Concentration of $\epsilon_j(z)$, $\gamma_j(z)$: For $q\geq 1$, 
        \begin{align}
            &\E |\epsilon_j(z)|^{2q}\leq KN^{-1}\delta_n^{4q-4}v^{-2q}\\
            &\E |\gamma_j(z)|^{2q}\leq KN^{-1}\delta_n^{4q-4}v^{-4q}.
        \end{align}
    \end{enumerate}
\end{lemma}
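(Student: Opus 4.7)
The four claims are handled in order, using standard Silverstein-type manipulations adapted to the multi-level structure $T_j=\sum_r l_{jr}^2\Sigma_r$. For claim (1), the bounds $|\beta_j|,|\tilde\beta_j|,|b_j|,|\psi_j|\le |z|v^{-1}$ follow from the classical observation that, with $B_{nj}\succeq 0$ and the spectral decomposition of $D_j$, one has $|1+r_j^TD_j^{-1}(z)r_j|\ge v/|z|$; Jensen's inequality transfers this to $b_j$ and $\psi_j$. For $\|R_j^{-1}\|,\|R^{-1}\|\le v^{-1}$, the crux is $\text{Im}\,\psi_j\le 0$, which holds because $\text{Im}\,\E[\text{Tr}(D^{-1}T_j)]=v\,\E[\text{Tr}(D^{-1}(D^*)^{-1}T_j)]\ge 0$ since $T_j\succeq 0$; then $\text{Im}(R)\preceq -vI$ and the standard unit-vector argument gives $\|R^{-1}\|\le v^{-1}$. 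For $|b_j-\psi_j|$, Sherman--Morrison yields $\text{Tr}(D^{-1}T_j)-\text{Tr}(D_j^{-1}T_j)=-\beta_j r_j^TD_j^{-1}T_jD_j^{-1}r_j$, which is $O(N^{-1}|z|v^{-3})$ in expectation using the preceding bounds together with $\E\|r_j\|^2=N^{-1}\text{Tr}(T_j)=O(1)$; multiplying by $|b_j||\psi_j|\le |z|^2v^{-2}$ produces the stated order (with the $|z|^4 v^{-7}$ exponents reflecting conservative choices rather than minimal necessary ones).

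For claim (2), write $b_j(z)^{-1}-1=\sum_r l_{jr}^2\,N^{-1}\E[\text{Tr}(D_j^{-1}(z)\Sigma_r)]$ and $\tilde b_j(z)^{-1}-1=\sum_r l_{jr}^2\tilde g_2^{(r)}(z)$. The rank-one swap from claim (1) reduces the difference between $N^{-1}\E[\text{Tr}(D_j^{-1}\Sigma_r)]$ and $\E g_2^{(r)}=N^{-1}\E[\text{Tr}(D^{-1}\Sigma_r)]$ to $O(N^{-1})$. Lemma \ref{lem:det.equiv} gives $g_2^{(r)}\to\tilde g_2^{(r)}$ almost surely, and this is upgraded to convergence in expectation using the spectral bound of Lemma \ref{lemma: concen.eig.sofs} to control $|g_2^{(r)}|$ uniformly, providing the necessary uniform integrability. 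Inverting (with both $b_j$ and $\tilde b_j$ bounded away from $0$ by claim (1)) yields $b_j-\tilde b_j\to 0$, and the analogous statement for $\psi_j$ follows from the last bound of claim (1).

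For claim (3), write $\tilde\beta_j-b_j=\tilde\beta_j b_j\cdot N^{-1}\bigl(\E[\text{Tr}(D_j^{-1}T_j)]-\text{Tr}(D_j^{-1}T_j)\bigr)$; by claim (1) it suffices to bound $\mathrm{Var}(\text{Tr}(D_j^{-1}T_j))$. A martingale decomposition along the filtration generated by $x_1,\ldots,x_l$ (denoting the associated conditional expectation by $\E_l$) writes this as a sum of $N-1$ orthogonal increments $(\E_l-\E_{l-1})[\text{Tr}(D_j^{-1}T_j)-\text{Tr}(D_{jl}^{-1}T_j)]$ where $D_{jl}=D_j-r_lr_l^T$, each $O(v^{-2})$ in $L^2$ by Sherman--Morrison and the operator-norm bounds; orthogonality then gives the stated order after careful tracking of $|z|$ and $v^{-1}$ factors in the spirit of \cite{baisilverstein2004}. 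For claim (4), since $x_j$ is independent of $D_j$, conditioning on $D_j$ makes $\epsilon_j(z)=N^{-1}\bigl(x_j^T(T_j^{1/2}D_j^{-1}T_j^{1/2})x_j-\text{Tr}(T_j^{1/2}D_j^{-1}T_j^{1/2})\bigr)$ a centered quadratic form to which the $q$-fold concentration of Lemma \ref{lem:concen} applies, producing $\E|\epsilon_j|^{2q}\le KN^{-1}\delta_n^{4q-4}\|T_j^{1/2}D_j^{-1}T_j^{1/2}\|^{2q}\le KN^{-1}\delta_n^{4q-4}v^{-2q}$; the $\gamma_j$ bound is identical with $D_j^{-2}$ replacing $D_j^{-1}$, consistent with $\gamma_j=d\epsilon_j/dz$.

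The main obstacle I anticipate is claim (2): no single ingredient is hard, but combining the almost-sure deterministic equivalence of Lemma \ref{lem:det.equiv} (about $g_2^{(r)}$, involving $D^{-1}$) with the expected-trace quantity defining $b_j$ (involving $D_j^{-1}$) requires the rank-one swap from claim (1), uniform integrability via Lemma \ref{lemma: concen.eig.sofs}, and a clean inversion step using the lower bound on $|b_j|,|\tilde b_j|$. The other fiddly piece is tracking exact $|z|,v^{-1}$ exponents in claims (1) and (3) so that the quantitative bounds match the stated constants.
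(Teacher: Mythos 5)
Your proposal follows essentially the same route as the paper on all four claims: the bounds in (1) via a lower bound on $\text{Im}\,(z(1+g(z)))$ (which the paper packages as a Nevanlinna-function lemma) together with the rank-one identity for $|b_j-\psi_j|$, the rank-one swap plus Lemma \ref{lem:det.equiv} and boundedness of $g_2^{(r)}$ for (2), the martingale decomposition of $\text{Tr}(D_j^{-1}T_j)$ with the rank-one trace bound for (3), and Lemma \ref{lem:concen} applied to the quadratic forms defining $\epsilon_j$ and $\gamma_j$ for (4). The one mislabel is the appeal to ``Jensen's inequality'' for $b_j$ and $\psi_j$: what is actually used is that $\text{Im}\,(z\,\E g(z))=\E[\text{Im}\,(zg(z))]\geq \text{Im}\,z$ by linearity of expectation, so $|1+\E g(z)|\geq v/|z|$ directly.
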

\subsection{Finite dimensional convergence}\label{subsec: mn1.finite}
For any $v_0>0$, set $C_0=\{z:|\text{Im}z|>v_0\}$.
In this section, we prove that for any finite set of $z_1,\ldots z_l\in C_0$, $\alpha_1,\ldots,\alpha_l\in\mathbbm{C}$, there exists deterministic $\check{\sigma}_n$, such that
\begin{align}\label{Mn1:finite.sum}
   \check{\sigma}_n^{-1} \sum_{\nu=1}^l\alpha_{\nu} M_n^1(z_{\nu})
\end{align}
converges to a standard complex Gaussian random variable. We shall accomplish this using the Martingale Central Limit Theorem \ref{thm:martingale}. In this section, we write $M_n^1(z)$ as the sum of martingale differences and verify condition (ii) in Theorem \ref{thm:martingale}. In the next section, we compute the covariance of the finite sum (\ref{Mn1:finite.sum}), and verify condition (i) in Theorem \ref{thm:martingale}.

Let $\E_0(\cdot)$ denote expectation and $\E_j(\cdot)$ denote conditional expectation with respect to the $\sigma$-field generated by $r_1$,...,$r_j$.  From the Woodbury formula, we have
\begin{align}
    D^{-1}&=(D_j(z)+r_jr_j^T)^{-1}\nonumber\\
    &=D_j^{-1}(z)-\frac{D_j^{-1}(z)r_jr_j^TD_j^{-1}(z)}{1+r_j^TD_j^{-1}(z)r_j}\nonumber\\
    &=D_j^{-1}(z)-\beta_j(z)D_j^{-1}(z)r_jr_j^TD_j^{-1}(z).\label{eq:D.Dj}
    \end{align}
Based on this property, and the definitions of $D$, $D_j$, $\beta_j$ and $\gamma_j$, we get
\begin{equation}\label{eq:tele}
\begin{aligned}
n(m_n-\E [m_n])
&=\text{Tr}[D^{-1}(z)-\E_0D^{-1}(z)]\\
&=\sum_{j=1}^N\text{Tr}\E_jD^{-1}(z)-\text{Tr}\E_{j-1}D^{-1}(z)\\
&=\sum_{j=1}^N(\E_j-\E_{j-1})\text{Tr}[D^{-1}(z)-D_j^{-1}(z)]\\
&=-\sum_{j=1}^N(\E_j-\E_{j-1})\beta_j(z)r_j^TD_j^{-2}(z)r_j,\\
&=-\sum_{j=1}^N(\E_j-\E_{j-1})\beta_j(z)\gamma_j+\sum_{j=1}^N(\E_j-\E_{j-1})\beta_j(z)N^{-1}\text{Tr}(D_j^{-2}(z)T_j)\\
&\triangleq I_1+I_2.
\end{aligned}
 \end{equation}
By definition, we immediately have
\begin{align}\label{eq:beta.prop1}
    \beta_j(z)=\Tilde{\beta}_j(z)-\beta_j(z)\Tilde{\beta}_j(z)\epsilon_j(z).
\end{align}
Applying this property again on the $\beta_j(z)$ on the right hand side, then
\begin{align}
    \beta_j(z)
    &=\Tilde{\beta}_j(z)-\left(\Tilde{\beta}_j(z)-\beta_j(z)\Tilde{\beta}_j(z)\epsilon_j(z)\right)\Tilde{\beta}_j(z)\epsilon_j(z),\\
    &=\Tilde{\beta}_j(z)-\Tilde{\beta}_j
    ^2(z)\epsilon_j(z)+\Tilde{\beta}_j
    ^2(z)\beta_j(z)\epsilon_j^2(z).\label{eq:beta.prop2}
\end{align}
Applying (\ref{eq:beta.prop1}) to $I_1$ gives
\begin{align*}
    I_1=-\sum_{j=1}^N(\E_j-\E_{j-1})\gamma_j\Tilde{\beta}_j(z)+\sum_{j=1}^N(\E_j-\E_{j-1})\gamma_j\beta_j(z)\Tilde{\beta}_j(z)\epsilon_j(z).
\end{align*}
Note that 
\begin{align}
    &\E_{j-1}\gamma_j\Tilde{\beta}_j(z)=\E [\gamma_j\Tilde{\beta}_j(z)\big|r_1,...,r_{j-1}]\nonumber\\
    =&\E_{r_{j+1},...,r_N}\left[\E_
{r_j}[\gamma_j\Tilde{\beta}_j(z)\big|r_1,...,r_{j-1},{r_{j+1},...,r_N}]\right]\nonumber\\
=&\E_{r_{j+1},...,r_N}\left[\Tilde{\beta}_j(z)\E_
{r_j}[\gamma_j\big|r_1,...,r_{j-1},{r_{j+1},...,r_N}]\right]\nonumber\\
=&\E_{r_{j+1},...,r_N}\left[\Tilde{\beta}_j(z)\E_
{r_j}[N^{-1}\text{Tr}\left(T_j^{1/2}D_j^{-2}T_j^{1/2}(x_jx_j^T-I)\right)\big|r_1,...,r_{j-1},{r_{j+1},...,r_N}]\right]=0.\label{eq:ej-1}
\end{align}
By the mutual independence of $(\E_j-\E_{j-1})\gamma_j\beta_j(z)\Tilde{ \beta}_j(z)\epsilon_j(z)$ the basic algebraic fact that $(x+y)^2\leq 2(x^2+y^2)$, we have
\begin{align}
    \E \left|\sum_{j=1}^N(\E_j-\E_{j-1})\gamma_j\beta_j(z)\Tilde{\beta}_j(z)\epsilon_j(z)\right|^2
    &= \sum_{j=1}^N\E \left|(\E_j-\E_{j-1})\gamma_j\beta_j(z)\Tilde{\beta}_j(z)\epsilon_j(z)\right|^2\label{eq:op1.start}\\
    &\leq 4\sum_{j=1}^N\E \left|\gamma_j\beta_j(z)\Tilde{\beta}_j(z)\epsilon_j(z)\right|^2.\nonumber
\end{align}
Apply  the boundedness results (\ref{eq:bd.beta}) on $\beta_j(z)$, $\Tilde{\beta}_j(z)$, then
\begin{align*}
    \E \left|\sum_{j=1}^N(\E_j-\E_{j-1})\gamma_j\beta_j(z)\Tilde{\beta}_j(z)\epsilon_j(z)\right|^2
    &\leq C_z\sum_{j=1}^N\E \left|\gamma_j\epsilon_j(z)\right|^2,
\end{align*}
where $C_z$ is a constant that depends on $z$.
By Cauchy Schwartz and Lemma \ref{lem:Mn1.prelim}, 
\begin{align*}
    \E \left|\sum_{j=1}^N(\E_j-\E_{j-1})\gamma_j\beta_j(z)\Tilde{\beta}_j(z)\epsilon_j(z)\right|^2
    &\leq C_z\sum_{j=1}^N\sqrt{\E \left|\gamma_j\right|^4}\sqrt{\E \left|\epsilon_j\right|^4}=o(1).
\end{align*}
As a result, applying Markov's Inequality, we can establish the convergence in probability as follows
\begin{align}
    \sum_{j=1}^N(\E_j-\E_{j-1})\gamma_j\beta_j(z)\Tilde{\beta}_j(z)\epsilon_j(z)=o_p(1).\label{eq:op1.end}
\end{align}
Collecting the terms yields 
\begin{align}\label{eq:I1}
    I_1=-\sum_{j=1}^N\E_j\gamma_j\Tilde{\beta}_j(z)+o_p(1).
\end{align}
Similarly, property (\ref{eq:beta.prop2}) implies
\begin{align*}
    I_2=&\sum_{j=1}^N(\E_j-\E_{j-1}) \Tilde{\beta}_j(z)N^{-1}\text{Tr}(D_j^{-2}(z)T_j)-\sum_{j=1}^N(\E_j-\E_{j-1})\Tilde{\beta}_j
    ^2(z)\epsilon_j(z)N^{-1}\text{Tr}(D_j^{-2}(z)T_j)\\
    &+\sum_{j=1}^N(\E_j-\E_{j-1})\Tilde{\beta}_j
    ^2(z)\beta_j(z)\epsilon_j^2(z)N^{-1}\text{Tr}(D_j^{-2}(z)T_j)
\end{align*}
Since $\Tilde{\beta}_j(z)\text{Tr}(D_j^{-2}(z)T_j)$ does not depend on $r_j$, we obtain
\begin{align*}
    \sum_{j=1}^N(\E_j-\E_{j-1}) \Tilde{\beta}_j(z)N^{-1}\text{Tr}(D_j^{-2}(z)T_j)=0.
\end{align*}
Following the same arguments leading to (\ref{eq:ej-1}) gives
\begin{align}\label{eq:ej-1.I2}
    \sum_{j=1}^N\E_{j-1}\Tilde{\beta}_j
    ^2(z)\epsilon_j(z)N^{-1}\text{Tr}(D_j^{-2}(z)T_j)=0.
\end{align}
Finally, arguing similarly to (\ref{eq:op1.start}-\ref{eq:op1.end}), we get
\begin{align*}
    \sum_{j=1}^N(\E_j-\E_{j-1})\Tilde{\beta}_j
    ^2(z)\beta_j(z)\epsilon_j^2(z)N^{-1}\text{Tr}(D_j^{-2}(z)T_j)=o_p(1).
\end{align*}
Collecting the terms gives
\begin{align}\label{eq:I2}
    I_2=\sum_{j=1}^N\E_j\Tilde{\beta}_j(z)\epsilon_j(z)N^{-1}\text{Tr}(D_j^{-2}(z)T_j)+o_p(1)
\end{align}
From (\ref{eq:tele}), (\ref{eq:I1}), and (\ref{eq:I2}), we conclude that
\begin{align}\label{eq:mart}
    n(m_n-\E [m_n])=\sum_{j=1}^Nh_j(z)+o_p(1),
\end{align}
where 
\begin{align}\label{eq:hj}
    h_j(z)=-\E_j\left(\Tilde{\beta}_j(z)\gamma_j(z)-\Tilde{\beta}_j
    ^2(z)\epsilon_j(z)N^{-1}\text{Tr}(D_j^{-2}(z)T_j)\right)=-\E_j\frac{d}{dz}\Tilde{\beta}_j(z)\epsilon_j(z).
\end{align}
From (\ref{eq:ej-1}) and (\ref{eq:ej-1.I2}), it is straightforward to check that $\E _{j-1}[h_j(z)]=0$.

Next, we verify condition (ii) in Theorem \ref{thm:martingale}. 
From the boundedness of $\Tilde{\beta}_j(z)$ and $N^{-1}\text{Tr}(D_j^{-2}(z)T_j)$, and the algebraic fact that $(x+y)^4\leq K(x^4+y^4)$ for some $K>0$, we have
\begin{align*}
    \E |h_j(z)|^4\leq K_1\E |\gamma_j(z)|^4+K_2\E |\gamma_j(z)|^4=o(N^{-1}),
\end{align*}
where the last step follows from Lemma \ref{lem:Mn1.prelim}. As a result, we obtain
\begin{align*}
     \sum_{j=1}^N\E [|h_j|^2(z)\mathbbm{1}_{|h_j|>\epsilon}]\leq \epsilon^{-2}\sum_{j=1}^N\E |h_j(z)|^4\rightarrow 0.
\end{align*}

\subsection{Covariance calculation}\label{subsec: mn1.finite2}
Let us start by substituting the martingale representation (\ref{eq:mart}) into the finite-dimensional sum (\ref{Mn1:finite.sum}). To verify condition (i) in Theorem \ref{thm:martingale}, our objective is to establish the existence of a deterministic sequence $\check{\sigma}_n$ such that
\begin{align*}
    \check{\sigma}_n^{-2}\sum_{j=1}^N\E_{j-1}\left[\left|\sum_{\nu=1}^l\alpha_{\nu}h_j(z_{\nu})\right|^2\right]\convp 1.
\end{align*}
To this end, it suffices to prove the existence of a sequence of $\sigma_n$ such that 
\begin{align}\label{eq:phi.z1.z2}
    {\sigma}_n^{-2}\Phi_n(z_1,z_2)={\sigma}_n^{-2}\sum_{j=1}^N\E_{j-1}[h_j(z_1)h_j(z_2)]\convp 1.
\end{align}
\subsubsection{Simplified representation}\label{subsec:sim.rep}
First, we shall derive a more tractable equivalent expression of (\ref{eq:phi.z1.z2}), where equivalence is defined in terms of the difference vanishing in probability. Incorporating (\ref{eq:hj}), and guided by the boundedness and integrability results established in Lemma \ref{lem:Mn1.prelim}, we are positioned to apply the Dominated Convergence Theorem to the difference quotient defined by $\deteqv{\beta}_j(z)\epsilon_j(z)$. Consequently, we obtain
\begin{align}
    \Phi_n(z_1,z_2)=\frac{\partial^2}{\partial z_2\partial z_1}\sum_{j=1}^N\E_{j-1}[\E_j(\Tilde{\beta}_j(z_1)\epsilon_j(z_1))\E_j(\Tilde{\beta}_j(z_2)\epsilon_j(z_2))].
\end{align}
Thus, as formalized in Lemma \ref{app:lem.der}, it suffices to consider
\begin{align}\label{eq:beta.eps}
   \sum_{j=1}^N\E_{j-1}[\E_j(\Tilde{\beta}_j(z_1)\epsilon_j(z_1))\E_j(\Tilde{\beta}_j(z_2)\epsilon_j(z_2))].
\end{align}
By Cauchy-Schwartz and Lemma \ref{lem:concen}, we establish in Lemma \ref{lem:tilde.beta.b} that 
the terms $\Tilde{\beta}_j$ can be equivalently replaced with the deterministic terms $b_j$. 
Consequently, we examine
\begin{align}\label{eq:b.eps}
    \sum_{j=1}^Nb_j(z_1)b_j(z_2)\E_{j-1}[\E_j(\epsilon_j(z_1))\E_j(\epsilon_j(z_2))].
\end{align}
In Lemma \ref{lem:Mn1.prelim}, we computed the limit in probability of $b_j(z)$. To study $\E_j(\epsilon_j(z_1))\E_j(\epsilon_j(z_2))$, we define $A=T_j^{1/2}D_j^{-1}(z_1)T_j^{1/2}$, $B=T_j^{1/2}D_j^{-1}(z_2)T_j^{1/2}$. Plugging in the definition of $\epsilon_j$ in (\ref{eq:eps}), we have
\begin{align*}
&\E_{j-1}[\E_j(\epsilon_j(z_1))\E_j(\epsilon_j(z_2))]\\
=&\E_{j-1}[\E_j(r_j^TD_j^{-1}(z_1)r_j-N^{-1}\text{Tr}(D_j^{-1}(z_1)T_j))\E_j(r_j^TD_j^{-1}(z_2)r_j-N^{-1}\text{Tr}(D_j^{-1}(z_2)T_j))]\\
=&N^{-2}\E_{j-1}[(x_j^T(\E_{j-1}A)x_j-\text{Tr}(\E_{j-1}A))(x_j^T(\E_{j-1}B)x_j-\text{Tr}(\E_{j-1}B))].
\end{align*}
Apply Lemma \ref{lemma:epsilon.exp}, then 
\begin{align*}
&\E_{j-1}[\E_j(\epsilon_j(z_1))\E_j(\epsilon_j(z_2))]=N^{-2}(2\text{Tr}\E_{j-1}(AB)+o(n))\\
=&N^{-2}2\text{Tr}(T_j^{1/2}\E_jD_j^{-1}(z_1)T_j\E_jD_j^{-1}(z_2)T_j^{1/2})+o(N^{-1}).
\end{align*}
Therefore, the goal is to prove that there exists $\sigma_n'$ such that
\begin{align}
    &\sigma_n'^{-2}N^{-2}\sum_{j=1}^Nb_j(z_1)b_j(z_2)\text{Tr}(\E_jD_j^{-1}(z_1)T_j\E_jD_j^{-1}(z_2)T_j)\\
    =&\sigma_n'^{-2}N^{-2}\sum_{r=1}^k\sum_{j=1}^Nl_{rj}^2b_j(z_1)b_j(z_2)w_{jr}(z_1,z_2)\rightarrow 1,
\end{align}
where $w_{jr}(z_1,z_2)=\text{Tr}(\E_jD_j^{-1}(z_1)T_j\E_jD_j^{-1}(z_2)\Sigma_r).$
\subsubsection{Convergence of $w_{jr}(z_1,z_2)$}\label{subsec:cov.replace}
The main idea is to sequentially substitute the random components in $w_{jr}(z_1,z_2)$, namely $D_j^{-1}(z_1)$ and $D_j^{-1}(z_2)$ by the deterministic $R_j^{-1}(z_1)$ and $R_j^{-1}(z_2)$ and compute the non-vanishing terms. To this end, we introduce a few notations: for $i\neq j$,
\begin{align}\label{eq:beta.ij}
    D_{ij}=B_n-r_ir_i^T-r_jr_j^T,\quad \beta_{ij}=\frac{1}{1+r_i^TD_{ij}^{-1}r_i},\quad b_{ij}=\frac{1}{1+N^{-1}E\text{Tr}(D_{ij}^{-1}T_i)}.
\end{align}
By definition,
\begin{align*}
    D_j(z)+R_j(z)=\sum_{i\neq j}r_ir_i^T-\frac{1}{N}\sum_{i\neq j}\psi_i(z)T_i,
\end{align*}
then
\begin{align}
    R_j^{-1}(z)+D_j^{-1}(z)
    &=R_j^{-1}(D_j(z)+R_j(z))D_j^{-1}\nonumber\\
    &=\sum_{i\neq j}R_j^{-1}r_ir_i^TD_j^{-1}-\frac{1}{N}\sum_{i\neq j}\psi_i(z)R_j^{-1}T_iD_j^{-1}\nonumber\\
    &=\sum_{i\neq j}R_j^{-1}r_ir_i^TD_j^{-1}-\sum_{i\neq j}\psi_i(z)R_j^{-1}r_ir_i^TD_{ij}^{-1}\label{eq:riDj}\\
    &+\sum_{i\neq j}\psi_i(z)R_j^{-1}r_ir_i^TD_{ij}^{-1}-\frac{1}{N}\sum_{i\neq j}\psi_i(z)R_j^{-1}T_iD_{ij}^{-1}\nonumber\\
    &+\frac{1}{N}\sum_{i\neq j}\psi_i(z)R_j^{-1}T_iD_{ij}^{-1}-\frac{1}{N}\sum_{i\neq j}\psi_i(z)R_j^{-1}T_iD_j^{-1}.\nonumber
\end{align}
From the fact that $\alpha^T(\Sigma+\beta\alpha^T)^{-1}=\alpha^T\Sigma^{-1}(1+\alpha^T\Sigma^{-1}\beta)^{-1}$,
we have
\begin{align*}
    r_i^TD_j^{-1}=r_i^T(D_{ij}+r_ir_i^T)^{-1}=\frac{r_i^TD_{ij}^{-1}}{1+r_i^TD_{ij}^{-1}r_i}=\beta_{ij}r_i^TD_{ij}^{-1}.
\end{align*}
Plugging this into equation (\ref{eq:riDj}), then
\begin{align}
    R_j^{-1}(z)+D_j^{-1}(z)
    &=\sum_{i\neq j}(\beta_{ij}(z)-\psi_i(z))R_j^{-1}r_ir_i^TD_{ij}^{-1}\nonumber\\
    &+\sum_{i\neq j}\psi_i(z)R_j^{-1}(r_ir_i^T-N^{-1}T_i)D_{ij}^{-1}\nonumber\\
    &+\frac{1}{N}\sum_{i\neq j}\psi_i(z)R_j^{-1}T_i(D_{ij}^{-1}-D_j^{-1})\triangleq A_2(z)+A_1(z)+A_3(z).\label{eq:decom.rj.dj}
\end{align}
Prior to substituting $D_j^{-1}(z_1)$ with $R_j^{-1}(z_1)$ in $w_{jr}(z_1,z_2)$, we establish several useful results regarding $A_1$, $A_2$ and $A_3$. We will demonstrate that terms involving $A_2$ and $A_3$ vanish, while $A_1$ requires more scrutiny.
\begin{lemma}\label{lem:A123}
    Under Assumption \ref{assump_trunc}, for any $z\in C_0$, the following hold:
    \begin{enumerate}
        \item For any (possibly random) matrix $M$ with  deterministic bound on its spectral norm, 
        \begin{align*}
            \E |\text{Tr}A_2(z)M|\leq O(N^{1/2}),\quad|\text{Tr}A_3(z)M|\leq O(1).
        \end{align*}
        \item For any deterministic matrix $M$ with  deterministic bound on its spectral norm, 
        \begin{align*}
            \E |\text{Tr}(A_1(z)M)|\leq O(N^{1/2}).
        \end{align*}
    \end{enumerate}
\end{lemma}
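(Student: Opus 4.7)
I would handle $A_1$, $A_2$, $A_3$ separately, using the Sherman--Morrison (Woodbury) identity together with the concentration bounds from Lemma \ref{lem:concen} and Lemma \ref{lem:Mn1.prelim}. The unifying idea is to rewrite every trace against $M$ as a sum of scalar quadratic forms $r_i^T(\,\cdot\,)r_i$, and then control each summand by (i) $\E\|r_i\|^2 = N^{-1}\text{Tr}(T_i) = O(1)$ for the $A_3$ estimate, (ii) the $L^2$-smallness of $\beta_{ij}-\psi_i$ for $A_2$, or (iii) the concentration of $r_i^T K r_i - N^{-1}\text{Tr}(T_i K)$ for kernels $K$ independent of $r_i$ for $A_1$.

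For $A_3$, Sherman--Morrison gives $D_j^{-1}-D_{ij}^{-1} = -\beta_{ij}D_{ij}^{-1}r_ir_i^T D_{ij}^{-1}$, so that
\begin{align*}
\text{Tr}(A_3(z)M) \;=\; \frac{1}{N}\sum_{i\neq j}\psi_i\beta_{ij}\,r_i^T D_{ij}^{-1}MR_j^{-1}T_iD_{ij}^{-1}r_i.
\end{align*}
All scalar prefactors and operator norms in the middle matrix are $O(1)$ by Lemma \ref{lem:Mn1.prelim} and Assumption \ref{assump_trunc}, so each summand is bounded in modulus by $C\|r_i\|^2$. Taking expectations with $\E\|r_i\|^2 = O(1)$ and using the $1/N$ prefactor to cancel the $N-1$ summands gives $\E|\text{Tr}(A_3(z)M)| = O(1)$.

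For $A_2$, Cauchy--Schwartz termwise gives
\begin{align*}
\E|\text{Tr}(A_2(z)M)| \;\leq\; \sum_{i\neq j}\sqrt{\E|\beta_{ij}-\psi_i|^2}\,\sqrt{\E|r_i^T D_{ij}^{-1}MR_j^{-1}r_i|^2},
\end{align*}
where the second factor is $O(1)$ by the same calculation as for $A_3$. For the first factor I would introduce the intermediate deterministic $b_{ij}:=(1+N^{-1}\E\text{Tr}(D_{ij}^{-1}T_i))^{-1}$: the stochastic bound $\E|\beta_{ij}-b_{ij}|^2 = O(N^{-1})$ follows from the concentration of $r_i^T D_{ij}^{-1}r_i$ around $N^{-1}\text{Tr}(D_{ij}^{-1}T_i)$ (the pair-$(i,j)$ analog of (\ref{eq:beta.tilde.b})), while the deterministic bound $|b_{ij}-\psi_i| = O(N^{-1})$ comes from the rank-two resolvent perturbation relating $D^{-1}$ to $D_{ij}^{-1}$. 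Summing $N$ contributions of size $N^{-1/2}$ gives $O(N^{1/2})$.

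For $A_1$, write $\text{Tr}(A_1(z)M) = \sum_{i\neq j}\psi_i\xi_i$ with $\xi_i := r_i^T K_i r_i - N^{-1}\text{Tr}(T_iK_i)$ and $K_i := D_{ij}^{-1}MR_j^{-1}$. Since $M$ is \emph{deterministic}, $K_i$ is independent of $r_i$, so $\xi_i$ is a centered quadratic form with kernel of $O(1)$ operator norm, and Lemma \ref{lem:concen} with $q=2$ gives $\E|\xi_i|^2 = O(N^{-1})$. A plain Cauchy--Schwartz on the index sum then produces $\E|\text{Tr}(A_1(z)M)|^2 \leq N\sum_{i\neq j}\E|\psi_i\xi_i|^2 = O(N)$, whence $\E|\text{Tr}(A_1(z)M)| = O(N^{1/2})$. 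The main obstacle is the $A_2$ estimate, specifically the chain $\beta_{ij}\to b_{ij}\to\psi_i$ needed to establish $\E|\beta_{ij}-\psi_i|^2 = O(N^{-1})$, which combines a stochastic concentration with a deterministic rank-two perturbation, each yielding rate $N^{-1}$; the $A_1$ and $A_3$ bounds are essentially one-line consequences of Woodbury and Cauchy--Schwartz once the right centering or factoring is set up. The deterministic-$M$ hypothesis in part~(2) is essential, since a random $M$ depending on $r_i$ would destroy the centering of $\xi_i$ and the concentration-based bound would fail.
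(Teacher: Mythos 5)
Your treatment of $A_1$ and $A_2$ is essentially the paper's: for $A_1$ the paper also writes the trace as $\sum_{i\neq j}\psi_i\bigl(N^{-1}x_i^TH_{ij}x_i-N^{-1}\mathrm{Tr}\,H_{ij}\bigr)$ with $H_{ij}=T_i^{1/2}D_{ij}^{-1}MR_j^{-1}T_i^{1/2}$ independent of $x_i$ and applies Lemma \ref{lem:concen}, and for $A_2$ it uses exactly the Cauchy--Schwartz split together with the chain $\beta_{ij}\to\tilde\beta_{ij}\to b_{ij}\to\psi_i$ (Lemma \ref{app:lem.prop}) to get $\E|\beta_{ij}-\psi_i|^2=O(N^{-1})$; whether one sums termwise bounds of size $N^{-1/2}$ or applies Cauchy--Schwartz to the index sum is immaterial.

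The one place you diverge is $A_3$, and there your argument proves something slightly weaker than what the lemma asserts. The statement (and its use as a pathwise remainder) claims $|\mathrm{Tr}\,A_3(z)M|\leq O(1)$ \emph{without} expectation. Your route---expanding $D_j^{-1}-D_{ij}^{-1}=-\beta_{ij}D_{ij}^{-1}r_ir_i^TD_{ij}^{-1}$ and bounding each summand by $C\lVert r_i\rVert^2$---only controls $\E\lVert r_i\rVert^2=N^{-1}\mathrm{Tr}\,T_i$, and after truncation $\lVert r_i\rVert^2$ can pathwise be as large as $O(n\delta_n^2)$, so you obtain $\E|\mathrm{Tr}\,A_3(z)M|=O(1)$ rather than a deterministic bound. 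The paper instead invokes the rank-one resolvent trace inequality (Lemma \ref{lemma:bound.trA}), $\bigl|\mathrm{Tr}\bigl((B-zI)^{-1}-(B+\tau rr^*-zI)^{-1}\bigr)A\bigr|\leq\lVert A\rVert v^{-1}$, which gives $|\mathrm{Tr}(D_{ij}^{-1}-D_j^{-1})\psi_iR_j^{-1}M|\leq v^{-1}|\psi_i|\,\lVert R_j^{-1}\rVert\,\lVert M\rVert$ for every realization, hence the pathwise $O(1)$ after the $N^{-1}$ prefactor. The expectation version suffices for how the lemma is actually consumed (everything is absorbed into $a(z_1,z_2)$ with $\E|a|\leq O(N^{1/2})$), but to prove the statement as written you should replace your $\E\lVert r_i\rVert^2$ step with this deterministic trace perturbation bound.
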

In Lemma \ref{lem:Mn1.prelim}, we proved that $\lVert R_j(z)\rVert\leq v^{-1}$. Therefore, substituting $D_j^{-1}(z_1)$ with $-R_j^{-1}(z_1)+A_1(z_1)+A_2(z_1)+A_3(z_1)$, from Lemma \ref{lem:A123} we immediately have
\begin{align*}
    w_{jr}(z_1,z_2)=&\text{Tr}(\Sigma_r\E_jD_j^{-1}(z_1)T_j\E_jD_j^{-1}(z_2))\\
   =&-\text{Tr}(\Sigma_rR_j^{-1}(z_1)T_j\E_jD_j^{-1}(z_2))+\text{Tr}(\Sigma_r\E_jA_1(z_1)T_j\E_jD_j^{-1}(z_2))+a(z_1,z_2),
\end{align*}
where $\E |a(z_1,z_2)|\leq O(N^{1/2})$.\footnote{Throughout the remainder of this section, we denote by $a(z_1,z_2)$ a term for which $\E |a(z_1,z_2)|\leq O(N^{1/2})$. It should be noted that $a(z_1,z_2)$ may not be the same in each occurrence.} Further substitute $D_j^{-1}(z_2)$ with $-R_j^{-1}(z_2)+A_1(z_2)+A_2(z_2)+A_3(z_2)$, then
\begin{align}
    w_{jr}(z_1,z_2)=&\text{Tr}(\Sigma_r\E_jD_j^{-1}(z_1)T_j\E_jD_j^{-1}(z_2))\nonumber\\
   =&\text{Tr}(\Sigma_rR_j^{-1}(z_1)T_jR_j^{-1}(z_2))+\text{Tr}(\Sigma_r\E_jA_1(z_1)T_j\E_jD_j^{-1}(z_2))+a(z_1,z_2).\label{eq:wjr.interm}
\end{align}

Finally, to study $\text{Tr}(\Sigma_r\E_jA_1(z_1)T_j\E_jD_j^{-1}(z_2))$, we combine the substitution technique with concentration results established in Lemma \ref{lem:concen}. 

Recall
\begin{align*}
    A_1(z_1)=\sum_{i\neq j}\psi_i(z_1)R_j^{-1}(z_1)(r_ir_i^T-N^{-1}T_i)D_{ij}^{-1}(z_1).
\end{align*}
First, note that for $i>j$,
\begin{align*}
   \E_j[\psi_i(z_1)R_j^{-1}(z_1)(r_ir_i^T-N^{-1}T_i)D_{ij}^{-1}(z_1)] = \psi_i(z_1)R_j^{-1}(z_1)\E_j[(r_ir_i^T-N^{-1}T_i)D_{ij}^{-1}(z_1)] =0,
\end{align*}
then
\begin{align*}
    A_1(z_1)=\sum_{i< j}\psi_i(z_1)R_j^{-1}(z_1)(r_ir_i^T-N^{-1}T_i)D_{ij}^{-1}(z_1).
\end{align*}
Now, write
\begin{align}
    &\text{Tr}(\E_jA_1(z_1)T_j\E_jD_j^{-1}(z_2)\Sigma_r)\nonumber\\
    =&\sum_{i< j}\psi_i(z_1)\text{Tr}\left[R_j^{-1}(z_1)(r_ir_i^T-N^{-1}T_i)\E_jD_{ij}^{-1}(z_1)T_j\E_jD_j^{-1}(z_2)\Sigma_r\right]\nonumber\\
    =&+\sum_{i< j}\psi_i(z_1)\text{Tr}\left[R_j^{-1}(z_1)(r_ir_i^T-N^{-1}T_i)\E_jD_{ij}^{-1}(z_1)T_j\E_jD_{ij}^{-1}(z_2)\Sigma_r\right]\label{eq:A1.4}\\
    &+\sum_{i< j}\psi_i(z_1)\text{Tr}\left[R_j^{-1}(z_1)(r_ir_i^T-N^{-1}T_i)\E_jD_{ij}^{-1}(z_1)T_j\E_j\left(D_{j}^{-1}-D_{ij}^{-1}\right)(z_2)\Sigma_r\right].\label{eq:A1.1}
\end{align}
where
\begin{align}
 (\ref{eq:A1.4})
    =&\sum_{i< j}\psi_i(z_1)(r_i^T\E_jD_{ij}^{-1}(z_1)T_j\E_jD_{ij}^{-1}(z_2)\Sigma_rR_j^{-1}(z_1)r_i\nonumber\\
    &-N^{-1}\text{Tr}\left[T_i\E_jD_{ij}^{-1}(z_1)T_j\E_jD_{ij}^{-1}(z_2)\Sigma_rR_j^{-1}(z_1)\right])\nonumber\\
    (\ref{eq:A1.1})
    =
    &-\frac{1}{N}\sum_{i< j}\psi_i(z_1)\text{Tr}\left[R_j^{-1}(z_1)T_i\E_jD_{ij}^{-1}(z_1)T_j\E_j\left(D_{j}^{-1}-D_{ij}^{-1}\right)(z_2)\Sigma_r\right]\label{eq:A1.5}\\
    &+\sum_{i< j}\psi_i(z_1)\text{Tr}\left[R_j^{-1}(z_1)r_ir_i^T\E_jD_{ij}^{-1}(z_1)T_j\E_j\left(D_{j}^{-1}-D_{ij}^{-1}\right)(z_2)\Sigma_r\right].\label{eq:A1.2}
\end{align}
By concentration result established in Lemma \ref{lem:concen}, it is straightforward to check that
\begin{align*}
    \E |(\ref{eq:A1.4})|\leq O(N^{1/2}).
\end{align*}
With the boundedness results established in (\ref{eq:d.dj.opnorm}) and Lemma \ref{lem:Mn1.prelim}, we also have
\begin{align*}
    \E |(\ref{eq:A1.5})|\leq O(1).
\end{align*} 
We proceed to analyze
\begin{align}
    \sum_{i< j}\psi_i(z_1)\text{Tr}\left[R_j^{-1}(z_1)r_ir_i^T\E_j(D_{ij}^{-1}(z_1))T_j\left(D_{j}^{-1}-D_{ij}^{-1}\right)(z_2)\Sigma_r\right]\label{eq:A1.3},
\end{align}
which satisfies $(\ref{eq:A1.2})=\E_j\left[(\ref{eq:A1.3})\right]$.  With property (\ref{eq:D.Dj}), we can expand $D_{j}^{-1}-D_{ij}^{-1}$ as
\begin{align}\label{eq:Dj.Dij}
    D_{j}^{-1}-D_{ij}^{-1}
    &=-\beta_{ij}D_{ij}^{-1}r_ir_i^TD_{ij}^{-1}.
\end{align}
This gives
\begingroup
\allowdisplaybreaks
\begin{align*}
    (\ref{eq:A1.3})
    =-&
    \sum_{i< j}\psi_i(z_1)\beta_{ij}(z_2)\text{Tr}\left[R_j^{-1}(z_1)r_ir_i^T\E_j(D_{ij}^{-1}(z_1))T_jD_{ij}^{-1}(z_2)r_ir_i^TD_{ij}^{-1}(z_2)\Sigma_r\right]\\
    =-&
    \sum_{i< j}\psi_i(z_1)\beta_{ij}(z_2)\left(r_i^T\E_j(D_{ij}^{-1}(z_1))T_jD_{ij}^{-1}(z_2)r_i\right)\left(r_i^TD_{ij}^{-1}(z_2)\Sigma_rR_j^{-1}(z_1)r_i\right)\\
    =-&
    \sum_{i< j}\psi_i(z_1)\beta_{ij}(z_2)\left(r_i^T\E_j(D_{ij}^{-1}(z_1))T_jD_{ij}^{-1}(z_2)r_i-N^{-1}\text{Tr}\left[\E_j(D_{ij}^{-1}(z_1))T_jD_{ij}^{-1}(z_2)T_i\right]\right)\\
    &\qquad\qquad\quad\quad\times \left(r_i^TD_{ij}^{-1}(z_2)\Sigma_rR_j^{-1}(z_1)r_i-N^{-1}\text{Tr}\left[D_{ij}^{-1}(z_2)\Sigma_rR_j^{-1}(z_1)T_i\right]\right)\\
    -&
   \sum_{i< j}\psi_i(z_1)\beta_{ij}(z_2)\text{Tr}\left[D_{ij}^{-1}(z_2)\Sigma_rR_j^{-1}(z_1)T_i\right]\\
    &\qquad\qquad\quad\quad\times \left(r_i^T\E_j(D_{ij}^{-1}(z_1))T_jD_{ij}^{-1}(z_2)r_i-N^{-1}\text{Tr}\left[\E_j(D_{ij}^{-1}(z_1))T_jD_{ij}^{-1}(z_2)T_i\right]\right)\\
    -&
    \sum_{i< j}\psi_i(z_1)\beta_{ij}(z_2)\text{Tr}\left[\E_j(D_{ij}^{-1}(z_1))T_jD_{ij}^{-1}(z_2)T_i\right]\\
    &\qquad\qquad\quad\quad\times \left(r_i^TD_{ij}^{-1}(z_2)\Sigma_rR_j^{-1}(z_1)r_i-N^{-1}\text{Tr}\left[D_{ij}^{-1}(z_2)\Sigma_rR_j^{-1}(z_1)T_i\right]\right)\\
    -&
    \frac{1}{N^2}\sum_{i< j}\psi_i(z_1)\beta_{ij}(z_2)\text{Tr}\left[\E_j(D_{ij}^{-1}(z_1))T_jD_{ij}^{-1}(z_2)T_i\right]\text{Tr}\left[D_{ij}^{-1}(z_2)\Sigma_rR_j^{-1}(z_1)T_i\right]\\
    \triangleq\quad &\alpha_1(z_1,z_2)+\alpha_2(z_1,z_2)+\alpha_3(z_1,z_2)+\alpha_4(z_1,z_2).
\end{align*}
\endgroup
Applying the concentration result established in Lemma \ref{lem:concen} gives
\begin{align*} \E |\alpha_1(z_1,z_2)+\alpha_2(z_1,z_2)+\alpha_3(z_1,z_2)|\leq O(N^{1/2}).
\end{align*}
Collecting the terms, we have
\begin{align}\label{eq:A1..}
    \text{Tr}(\E_jA_1(z_1)T_j\E_jD_j^{-1}(z_2)\Sigma_r)
    =\E_j\alpha_4(z_1,z_2)+a(z_1,z_2).
\end{align}  
In Lemma \ref{app:lem.prop}, it is established that
\begin{align*}
    \E \left|\beta_{ij}(z)-\psi_i(z)\right|\leq C_zN^{-1/2},
\end{align*}
for a constant $C$ that depends only on $z$. Combining this property with property (70) and the concentration results established in Lemma \ref{lem:concen}, we can substitute $\beta_{ij}$ with $\psi_i$, and substitute $D_{ij}$ with $D_j$ to obtain
\begin{align*}
    \alpha_4(z_1,z_2)=-
    \frac{1}{N^2}\sum_{i< j}\psi_i(z_1)\psi_i(z_2)\text{Tr}&\left[\E_j(D_{j}^{-1}(z_1))T_jD_{j}^{-1}(z_2)T_i\right]\times\\
    &\quad\text{Tr}\left[D_{j}^{-1}(z_2)\Sigma_AR_j^{-1}(z_1)T_i\right]+a(z_1,z_2).
\end{align*}
 Now, if we substitute $D_j^{-1}(z_2)$ with $-R_j^{-1}(z_2)+A_1(z_2)+A_2(z_2)+A_3(z_3)$, then taken together with the properties of $A_1$, $A_2$, $A_3$ established in Lemma \ref{lem:A123} and the non-randomness of $R_j$, we have
\begin{align}
     \alpha_4(z_1,z_2)=-
    \frac{1}{N^2}\sum_{i< j}\psi_i(z_1)\psi_i(z_2)\text{Tr}&\left[\E_j(D_{j}^{-1}(z_1))T_jD_{j}^{-1}(z_2)T_i\right]\times\nonumber\\
    &\text{Tr}\left[R_{j}^{-1}(z_2)\Sigma_AR_j^{-1}(z_1)T_i\right]+a(z_1,z_2).\label{eq:alpha.4}
\end{align}
 Substituting the above formula into (\ref{eq:A1..}) and (\ref{eq:wjr.interm}) yields
\begin{align}
    &w_{jr}(z_1,z_2)=\E_j \text{Tr}(\E_j(D_j^{-1}(z_1))T_jD_j^{-1}(z_2)\Sigma_r)\nonumber\\
   =&\text{Tr}(R_j^{-1}(z_1)T_jR_j^{-1}(z_2)\Sigma_r)+a(z_1,z_2)\nonumber\\
   &+\frac{1}{N^2}\E_j\sum_{i< j}\psi_i(z_1)\psi_i(z_2)\text{Tr}\left[\E_j(D_{j}^{-1}(z_1))T_jD_{j}^{-1}(z_2)T_i\right]\text{Tr}\left[R_{j}^{-1}(z_2)\Sigma_rR_j^{-1}(z_1)T_i\right]\nonumber\\
   =&\text{Tr}(R_j^{-1}(z_1)T_jR_j^{-1}(z_2)\Sigma_r)+a_6(z_1,z_2)\nonumber\\
   &+\frac{1}{N^2}{\sum_{s=1}^k}\E_jw_{js}(z_1,z_2)\sum_{i< j}l_{si}^2\psi_i(z_1)\psi_i(z_2)\text{Tr}\left[R_{j}^{-1}(z_2)\Sigma_rR_j^{-1}(z_1)T_i\right].\label{eq: interm.step}
\end{align}
For each $a,b=1,\ldots,k$, define 
\begin{align}
\Xi^{ab}_j&=\frac{1}{N}\text{Tr}\left[\deteqv{R}_{j}^{-1}(z_2)\Sigma_a\deteqv{R}_j^{-1}(z_1)\Sigma_b\right],\quad
    h^{ab}_j=\frac{1}{N}\sum_{i< j}l_{ai}^2l_{bi}^2\deteqv{b}_i(z_1)\deteqv{b}_i(z_2),\quad\Lambda^{ab}_j=\sum_{r=1}^kh^{ra}_j\Xi^{rb}_j,\label{eq:xi.h.Lam}
\end{align}
where
\begin{align}\label{eq:R.deteqv}
    \deteqv{R}=\frac{1}{N}\sum_{j}\deteqv{b}_j(z)T_j-zI.
\end{align}
Above, we have substituted $\psi_j(z)$  in (\ref{eq: interm.step}) by $\deteqv{b}_j(z)$ defined in Lemma \ref{lem:Mn1.prelim}, with $|\psi_j(z)-\deteqv{b}_j(z)|=o(1)$.
Let $\deteqv{w}_{jr}(z_1,z_2)$ be solutions to the system of equations
\begin{align}
    \begin{cases}\label{eq:deteqv.w}
        \deteqv{w}_{jr}(z_1,z_2)=&{\sum_{s=1}^k}l_{sj}^2\Xi^{sr}_j+{\sum_{s=1}^k}
    \deteqv{w}_{js}(z_1,z_2)\Lambda^{sr}_j,r=1\ldots,k.
    \end{cases}
\end{align}
Combined with the arguments in Section \ref{subsec:sim.rep}, we conclude that
\begin{align}
   \sigma_n^2(z_1,z_2)= 
        \frac{\partial^2}{\partial z_2\partial z_1}\sum_{r=1}^k\left(N^{-2}\sum_{j=1}^Nl_{rj}^2\deteqv{b}_j(z_1)\deteqv{b}_j(z_2)\deteqv{w}_{jr}(z_1,z_2)\right)
\end{align}
satisfies condition (\ref{eq:phi.z1.z2}). 
\subsection{Tightness of $M_n^1$}\label{subsec: mn1.tight}
By condition of tightness in Theorem 8.2 and 12.3 in Billingsley \cite{billingsley2013convergence}, we would like to prove that for any positive $\epsilon$, $\eta>0$, there exists $\delta\in\left[0,1\right]$ such that, for any $|z_1-z_2|\leq \delta$, we have
\begin{align}\label{eq:tightness.def}
    \mathbbm{P}\left[\left|M_n^1(z_1)-M_n^1(z_2)\right|>\epsilon\right]<\eta.
\end{align} 

First, building upon the analyses in \cite{Guionnet2000}, we obtain the following concentration result:
\begin{lemma}[Concentration of the spectral measure of $B_n$]\label{lem:concen.spectral.S}
For functions f such that $g(x)=f(x^2)$ is lipschitz, we have
    \begin{align}\label{eq: cocentration}
    \mathbbm{P}\left(\left| \int fdF_n-\E \int fdF_n\geq \epsilon\right|\right)\leq \exp\left(-\frac{C_{\epsilon}n^2}{|g|_L^2}\right),
    \end{align}
    where $C_{\epsilon}$ is a constant that depends on $\epsilon$.
    \end{lemma}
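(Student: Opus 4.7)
The plan is to realize $\int f\,dF_n$ as a Lipschitz function of the underlying Gaussian entries and then invoke Gaussian concentration (the Herbst argument via the log-Sobolev inequality, as in Theorem 1.1 of \cite{Guionnet2000}). The hypothesis that $g(x) = f(x^2)$ is Lipschitz is exactly what is needed to convert Frobenius-type control of the rectangular ``square root'' matrix into control of the spectral linear statistic of $B_n$.

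First, using representation (\ref{eq:Bn.2}), I would write $B_n = N^{-1} M^T M$ with $M = \sum_{r=1}^k L_r X_r \Sigma_r^{1/2}$, so that $M$ depends linearly on the i.i.d. standard Gaussian entries of $X_1, \ldots, X_k$. Since the eigenvalues of $B_n$ are the squared singular values of $M/\sqrt{N}$, we have $\int f\,dF_n = n^{-1} \sum_{i=1}^n g(\sigma_i(M/\sqrt{N}))$, which is the natural object for $|g|_L$ to act upon.

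Next, I would bound the Lipschitz constant of $\tilde F : Y \mapsto \int f\,dF_n$, where $Y \in \mathbbm{R}^{kNn}$ is the concatenation of the entries of all $X_r$ (a standard Gaussian vector under Assumption \ref{assump_trunc}). The Hoffman-Wielandt/Mirsky inequality gives $\sum_i (\sigma_i(A) - \sigma_i(B))^2 \leq \|A - B\|_F^2$, so by Lipschitzness of $g$ and Cauchy-Schwarz across the $n$ singular values,
\[
\left|\frac{1}{n}\sum_i g(\sigma_i(A)) - \frac{1}{n}\sum_i g(\sigma_i(B))\right| \leq \frac{|g|_L}{\sqrt n}\|A - B\|_F.
\]
The map $Y \mapsto M(Y)/\sqrt{N}$ is linear, and by the triangle inequality together with the uniform bounds on $\|L_r\|_2$ and $\|\Sigma_r^{1/2}\|_2$ from Assumption \ref{assump}(4),
\[
\left\|\bigl(M(Y) - M(Y')\bigr)/\sqrt{N}\right\|_F \leq \frac{\sqrt{k}\,\mathrm{s}_L\mathrm{s}_\Sigma}{\sqrt N}\|Y - Y'\|_2.
\]
Composing these two bounds yields the overall Lipschitz constant $L_n \leq C |g|_L / \sqrt{nN}$, where $C$ depends only on $k$, $\mathrm{s}_L$, and $\mathrm{s}_\Sigma$.

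Finally, since $Y$ is a standard Gaussian vector, the Gaussian concentration inequality for Lipschitz functions (Theorem 1.1 of \cite{Guionnet2000} applied with the log-Sobolev constant of the standard Gaussian measure) gives
\[
\mathbbm{P}\!\left(\left|\int f\,dF_n - \E \int f\,dF_n\right| \geq \epsilon\right) \leq 2 \exp\!\left(-\frac{\epsilon^2}{2 L_n^2}\right) \leq \exp\!\left(-\frac{C_\epsilon n^2}{|g|_L^2}\right),
\]
where the final step uses $nN \geq c' n^2$ from Assumption \ref{assump}(2) and absorbs the remaining constants into $C_\epsilon$. I do not expect any essential obstacle here; the only point requiring care is chaining the Lipschitz constants through the multi-level structure of $M$, but this is handled routinely by the triangle inequality in conjunction with the uniform spectral bounds.
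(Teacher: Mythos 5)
Your proposal is correct and takes essentially the same route as the paper: both reduce the claim to log-Sobolev/Gaussian concentration for a Lipschitz function of the independent entries (Theorem 1.1 of \cite{Guionnet2000}), with the hypothesis that $g(x)=f(x^2)$ is Lipschitz supplying the needed Lipschitz control of the spectral statistic. The only presentational difference is that the paper symmetrizes $B_n$ into a $2\times 2$ block matrix $Y$ with $Y^2=\mathrm{diag}(B_n,\underline{B_n})$ and cites the methodology of Corollary 1.8 of \cite{Guionnet2000}, whereas you carry out the same Lipschitz-constant computation ($O(|g|_L/\sqrt{nN})$) explicitly via Mirsky's inequality for singular values and the linearity of $Y\mapsto M(Y)$.
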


Using this concentration result, we establish the tightness of $M_n^1$ as follows:
\begin{lemma}
     On $C_0=\{z:\text{Im}z>v_0\}$, the stochastic process 
    $$\left\{M_n^1(z)=n(m_n(z)-\E [m_n(z)])\bigg|z\in C_0\right\}$$ 
    is tight.
\end{lemma}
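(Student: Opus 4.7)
The plan is to verify Billingsley's tightness criterion (\ref{eq:tightness.def}) directly, by applying Lemma \ref{lem:concen.spectral.S} to the resolvent kernel associated with the difference $M_n^1(z_1) - M_n^1(z_2)$. Fix a compact set $K \subset C_0$ and $z_1, z_2 \in K$, and write
\begin{equation*}
\frac{1}{n}\bigl(M_n^1(z_1) - M_n^1(z_2)\bigr) = \int h\, dF^{B_n} - \E \int h\, dF^{B_n}, \qquad h(x) = \frac{z_1 - z_2}{(x - z_1)(x - z_2)}.
\end{equation*}
A direct calculation shows that $g(x) = h(x^2)$ is globally Lipschitz on $\mathbbm{R}$ with $|g|_L \leq C(v_0, K)\,|z_1 - z_2|$: the derivative $g'(x)$ is proportional to $(z_1 - z_2)$, is bounded for all $x$ since $|x^2 - z_i|\geq v_0$, and decays like $|x|^{-3}$ at infinity.

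Applying Lemma \ref{lem:concen.spectral.S}, whose exponent scales like $n^2 \epsilon^2 / |g|_L^2$ because it arises from the Herbst argument for Gaussian log-Sobolev measures, and rescaling $\epsilon \mapsto \epsilon/n$, yields
\begin{equation*}
\mathbbm{P}\bigl(|M_n^1(z_1) - M_n^1(z_2)| \geq \epsilon\bigr) \leq 2\exp\!\left(-\frac{c\,\epsilon^2}{C(v_0, K)^2\,|z_1 - z_2|^2}\right),
\end{equation*}
uniformly in $n$ and in $z_1, z_2 \in K$. The factor of $n^2$ in the original concentration exponent exactly cancels the $n$ introduced by the scaling $M_n^1 = n(m_n - \E m_n)$, so the bound is dimension-free. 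Given any $\epsilon, \eta > 0$, one then chooses $\delta > 0$ small enough that the right-hand side is less than $\eta$ whenever $|z_1 - z_2| \leq \delta$; this is exactly (\ref{eq:tightness.def}). Applying the same inequality with one argument held fixed, together with $\E M_n^1(z) = 0$, delivers pointwise boundedness in probability of $\{M_n^1(z)\}$, completing the verification of Billingsley's tightness criterion on $K$, and hence on $C_0$ since $K$ was arbitrary.

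The main obstacle is confirming that the concentration constant in Lemma \ref{lem:concen.spectral.S} indeed scales quadratically in $\epsilon$, so that substituting $\epsilon\mapsto\epsilon/n$ eliminates the $n$-dependence entirely. This rests on the sub-Gaussian form of the concentration inequality for Lipschitz functionals of Gaussian vectors, furnished by the log-Sobolev machinery of \cite{Guionnet2000}, and on the uniform Lipschitz bound $|g|_L = O(|z_1 - z_2|)$ away from the real axis. By contrast, the moment-method tightness argument in \cite{baisilverstein2004} requires delicate control of higher-order resolvent expansions; the present concentration-based route sidesteps those computations entirely, which is the source of the simplification advertised in the introduction.
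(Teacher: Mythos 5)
Your proposal is correct and follows essentially the same route as the paper: write the increment as an LSS of the kernel $(x-z_1)^{-1}-(x-z_2)^{-1}$, check that its composition with $x\mapsto x^2$ is Lipschitz with constant $O(|z_1-z_2|)$ on $C_0$, and invoke the Guionnet--Zeitouni concentration bound of Lemma \ref{lem:concen.spectral.S}, where the $n^2$ in the exponent absorbs the factor $n$ in $M_n^1$. The only additions relative to the paper are your explicit remark on the $\epsilon\mapsto\epsilon/n$ rescaling and the pointwise-boundedness check for Billingsley's criterion, both of which are consistent with (and implicit in) the paper's argument.
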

\begin{proof}
When $\text{Im}z>v_0$, by definition of tightness we would like to prove that for an positive $\epsilon$, $\eta>0$, there exists $\delta\in\left[0,1\right]$ such that, for any $|z_1-z_2|\leq \delta$, we have
\begin{align*}
    \mathbbm{P}\left[\left|M_n^1(z_1)-M_n^1(z_2)\right|>\epsilon\right]<\eta.
\end{align*}
Define a function $\Tilde{f}(\lambda)=(\lambda-z_1)^{-1}-(\lambda-z_2)^{-1}$, then 
$$M_n^1(z_1)-M_n^1(z_2)=n\left[\int \Tilde{f}dF_n-\E \int \Tilde{f}dF_n\right].$$
Let $g(x)=\Tilde{f}(x^2)=(x^2-z_1)^{-1}-(x^2-z_2)^{-1}$. Observe that both the real and imaginary parts of $g$ are $C\delta v_0^{-4}$ Lipschitz. Therefore, applying Lemma \ref{lem:concen.spectral.S}, we have the bound
\begin{align*}
    \mathbbm{P}\left[\left|M_n^1(z_1)-M_n^1(z_2)\right|>\epsilon\right]
    =\mathbbm{P}\left[n\left|\int \Tilde{f}dF_n-\E \int \Tilde{f}dF_n\right|>\epsilon\right]<2\exp{\left(-\frac{C_\epsilon v_0^8}{\delta^2}\right)}.
\end{align*}
We conclude the proof by taking $\delta = \min\left\{\frac{C_{\epsilon}v_0^4}{\log{(2\eta^{-1})}},1\right\}$. 
\end{proof}

\subsection{Central Limit Theorem for $M_n^1$}\label{subsec: clt.mn1}
Previously, we have been working with $z\in C_0=\{z:\text{Im}z>v_0\}$. 
In Sections \ref{subsec: mn1.finite} and \ref{subsec: mn1.finite2}, we have proved the convergence of finite sums of the form
\begin{align*}
     \frac{\sum_{\nu=1}^l \alpha_{\nu}M_n^1(z_{\nu})}{\sqrt{{\sum_{\nu=1}^l\sum_{\nu'=1}^l}\alpha_{\nu}\alpha_{\nu'}\sigma_n^2(z_{\nu},\Bar{z}_{\nu'})}}
  \end{align*}
for $z_1,\ldots,z_l\in C_0$. In Section \ref{subsec: mn1.tight}, we established the tightness of $M_n^1$. In this section, we aim to prove the following result:
\begin{lemma}\label{cor:sos.gamma.gp}
Under Assumption \ref{assump_trunc}, for any function $g$ analytic on an interval containing (\ref{eq:open.int}), we have that 
 \begin{align*}
    \frac{\oint g(z)M_n^1(z)dz}{\sqrt{\oint\oint g(z_1)g(z_2)\sigma^2_n(z_1,z_2)dz_1dz_2}}\convd CN(0,1),
\end{align*}
where the contours are as specified in Lemma \ref{lem:main}.
\end{lemma}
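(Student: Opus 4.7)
\emph{Plan.} My plan is to lift the finite-dimensional CLT of Section \ref{subsec: mn1.finite} and the tightness of Section \ref{subsec: mn1.tight} to weak convergence of the random function $z\mapsto M_n^1(z)$ on a fixed rectangular contour $\mathcal{C}\subset C_0$, push the convergence through the linear functional $h\mapsto \oint_{\mathcal{C}}g(z)h(z)\,dz$ by continuous mapping, and finally replace any contour from the statement with $\mathcal{C}$ using Cauchy's theorem on a high-probability event supplied by Lemma \ref{lemma: concen.eig.sofs}.

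\emph{Contour choice and CLT on $\mathcal{C}$.} Fix $v_0>0$ small and let $\mathcal{C}$ be a rectangle with horizontal edges at height $\pm v_0$ enclosing $[0, C_u+\varepsilon]$ for some $\varepsilon>0$, where $C_u=k^2(1+\sqrt{C})^2 \mathrm{s}_L^2\mathrm{s}_\Sigma^2$, so $\mathcal{C}\subset C_0$. The finite-dimensional CLT from Section \ref{subsec: mn1.finite} gives that for any $z_1,\ldots,z_l\in\mathcal{C}$ and scalars $\alpha_\nu$, the sum $\sum_\nu \alpha_\nu M_n^1(z_\nu)$, normalized by $\check{\sigma}_n=(\sum_{\nu,\nu'}\alpha_\nu\alpha_{\nu'}\sigma_n^2(z_\nu,\bar z_{\nu'}))^{1/2}$, converges to a standard complex Gaussian. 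Combined with the tightness of $\{M_n^1(\cdot)\}$ on $C_0$ established in Section \ref{subsec: mn1.tight}, this yields weak convergence of $M_n^1$ as a random element of $C(\mathcal{C})$ with the supremum norm. Since $L_g(h):=\oint_{\mathcal{C}} g(z)h(z)\,dz$ is a bounded linear functional on $C(\mathcal{C})$, the continuous mapping theorem applied to a Riemann-sum approximation $\sum_\nu g(z_\nu)M_n^1(z_\nu)\Delta z_\nu$ gives
\begin{align*}
\frac{\oint_{\mathcal{C}} g(z)M_n^1(z)\,dz}{\sqrt{\oint_{\mathcal{C}}\oint_{\mathcal{C}}g(z_1)g(z_2)\sigma_n^2(z_1,z_2)\,dz_1dz_2}}\ \convd\ CN(0,1),
\end{align*}
where interchanging the fine-mesh limit with the weak limit is justified by the deterministic bound $|M_n^1(z)|\leq 2n/|\mathrm{Im}\,z|$ on $\mathcal{C}$ and tightness.

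\emph{Contour deformation and main obstacle.} For an arbitrary contour $\mathcal{C}^*$ enclosing the interval (\ref{eq:open.int}) and contained in the analyticity domain of $g$, Lemma \ref{lemma: concen.eig.sofs} implies that on an event $E_n$ with $\mathbb{P}(E_n)\to 1$, every eigenvalue of $B_n$ lies in $[0, C_u+\varepsilon/2]$; the eigenvalues of $\deteqv{B}_n$ are deterministically confined to the same set for $n$ large by (\ref{eq:concen.bn}) applied to $\deteqv{B}_n$. On $E_n$, both $(B_n-zI)^{-1}$ and $(\deteqv{B}_n-zI)^{-1}$ are analytic in the annular region between $\mathcal{C}$ and $\mathcal{C}^*$, so Cauchy's theorem equates the two contour integrals of $g(z)M_n^1(z)$; since $\mathbb{P}(E_n^c)\to 0$, the weak limit transfers from $\mathcal{C}$ to $\mathcal{C}^*$. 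The main obstacle is the handling of the nonoverlap condition on the two contours in the denominator: $\sigma_n^2(z_1,z_2)$, arising as a second derivative of a resolvent-type quantity (\ref{eq:sigma}), is singular on the diagonal $z_1=z_2$, so the double contour integral and its Riemann-sum approximation must be carried out on disjoint inner/outer copies of $\mathcal{C}$. A subsidiary check is that the normalization $\tau_n^2:=\oint\oint g(z_1)g(z_2)\sigma_n^2(z_1,z_2)\,dz_1dz_2$ remains bounded away from zero along the sequence, which follows from the explicit form (\ref{eq:sigma}) together with the bound on $\deteqv{b}_j$ implicit in Lemma \ref{lem:Mn1.prelim} on contours away from the spectrum.
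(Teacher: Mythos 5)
There is a genuine gap, and it sits exactly where the paper has to work hardest. You fix $v_0>0$ and take $\mathcal{C}$ to be a rectangle with horizontal edges at height $\pm v_0$ enclosing $[0,C_u+\varepsilon]$, and you assert $\mathcal{C}\subset C_0=\{z:|\mathrm{Im}\,z|>v_0\}$. That is impossible: any closed contour surrounding a real interval must cross the real axis, and the vertical edges of your rectangle sweep through all imaginary parts in $[-v_0,v_0]$. The finite-dimensional CLT of Section \ref{subsec: mn1.finite} and the tightness of Section \ref{subsec: mn1.tight} are established only on $C_0$, so they give you weak convergence of $M_n^1$ restricted to the two horizontal edges, not on all of $\mathcal{C}$; and the bound $|M_n^1(z)|\leq 2n/|\mathrm{Im}\,z|$ you invoke to control the Riemann-sum approximation degenerates precisely on the vertical edges. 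Your continuous-mapping step therefore only yields convergence of $\int_{\mathcal{C}_u\cup\overline{\mathcal{C}_u}}g(z)M_n^1(z)\,dz$, and the contribution of the vertical edges $\mathcal{C}_j=\{z=\pm r+iy,\ |y|\leq v_0\}$ is left unaccounted for.

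The paper closes exactly this hole with Lemma \ref{lem.xiqn}: on the event $Q_n=\{\lambda_{\max}(B_n)\leq C_u+\eta\}$, which has probability $1-o(n^{-t})$ by Lemma \ref{lemma: concen.eig.sofs}, the trace of the resolvent at $z\in\mathcal{C}_j$ equals the integral of a truncated Lipschitz function $g_z(\lambda)=([\lambda]_K-z)^{-1}$ against the spectral measure, so the concentration result of Lemma \ref{lem:concen.spectral.S} gives $\mathrm{Var}[m_n(z)\mathbbm{1}_{Q_n}]=O(\eta^{-4}n^{-2})$, hence $\E|M_n^1(z)\mathbbm{1}_{Q_n}|^2=O(1)$ uniformly on $\mathcal{C}_j$ and $\int_{\mathcal{C}_j}\E|M_n^1\mathbbm{1}_{Q_n}|^2\,dz=O(v_0)\to 0$ as $v_0\to 0$. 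Some argument of this type — exploiting eigenvalue confinement rather than the $\mathrm{Im}\,z$-resolvent bound — is indispensable and is absent from your proposal. Your subsequent contour-deformation step (Cauchy's theorem between two admissible contours on the high-probability event of eigenvalue confinement) is sound as far as it goes and matches how the paper justifies the freedom in choosing the contour, but it presupposes the CLT on a full closed contour, which is the very thing still to be proved. Your remarks on keeping the two contours in the denominator nonoverlapping are correct and consistent with the paper.
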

Let $\mathcal{C}$ be a contour containing the interval (\ref{eq:open.int}), with endpoints at $(\pm r,\pm v_0)$, with $r=(C_u+\eta)^2+\eta$. Here $C_u$ is the upper spectral limit defined in (\ref{lemma: concen.eig.sofs}), $\eta,v_0>0$. We split $\mathcal{C}$ into the union of $\mathcal{C}_u$, $\overline{\mathcal{C}_u}$ and $\mathcal{C}_j$, where $\mathcal{C}_u=\{z=x+iv_0,|x|\leq r\}$, $\mathcal{C}_j=\{z=\pm r+iy,|y|\leq v_0\}$. {Combining the finite-dimensional convergence and tightness results, we have that}
\begin{align}\label{eq:cu.mn1.conv}
    \frac{\int_{\mathcal{C}_u} g(z)M_n^1(z)dz}{\sqrt{\int_{\mathcal{C}_u}\int_{\overline{\mathcal{C}_u}} g(z_1)g(z_2)\sigma^2_n(z_1,z_2)dz_1dz_2}}\convd CN(0,1).
\end{align}
The formal proof is deferred to Section \ref{app:subsec.cu.mn1.conv}.
Therefore, to complete the proof of Lemma \ref{cor:sos.gamma.gp} we need only establish 
\begin{align}\label{eq:mn1.cj}
    \int_{\mathcal{C}_j} g(z)M_n^1(z)dz\rightarrow 0.
\end{align}
When $\text{Im}z<v_0$, let $Q_n=\{\lambda_{max}(S)\leq C_u+\eta\}$. By Lemma \ref{lemma: concen.eig.sofs}, we have that for any $\eta>0$, positive integer $t>0$, 
\begin{align*}
    \mathbbm{P}[Q_n^c]=o(n^{-t}).
\end{align*}
By Cauchy-Schwartz, we can conclude (\ref{eq:mn1.cj}) from the following result.
\begin{lemma} \label{lem.xiqn}
Under the assumptions of Lemma \ref{lemma: concen.eig.sofs},
\begin{align*}
    \lim_{v_0\rightarrow 0}\limsup_{n}\int_{\mathcal{C}_j}\E [|M_n^1(z)\mathbbm{1}_{Q_n}|^2]dz= 0.
\end{align*}
\end{lemma}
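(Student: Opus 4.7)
The plan is to note that the contour $\mathcal{C}_j$ consists of two vertical segments of total length $4v_0$, so it suffices to establish a uniform bound
$$\limsup_{n\to\infty}\sup_{z\in\mathcal{C}_j}\E|M_n^1(z)\mathbbm{1}_{Q_n}|^2\leq K(\eta),$$
after which $\limsup_n\int_{\mathcal{C}_j}\E|M_n^1(z)\mathbbm{1}_{Q_n}|^2\,dz\leq 4v_0 K(\eta)\to 0$ as $v_0\to 0$.

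To produce such a bound, I would exploit the fact that on $Q_n$ the spectrum of $B_n$ lies in $[0,C_u+\eta]$, which is separated from $\mathcal{C}_j$ (whose real part is $\pm r$ with $r>C_u+\eta$) by a positive gap $d=d(\eta)>0$. Introduce a smooth cutoff $\phi:\mathbbm{R}\to[0,1]$ with $\phi\equiv 1$ on $[0,C_u+\eta]$ and support strictly separated from $\{\pm r\}$, and set
$$\tilde m_n(z)=\int\phi(\lambda)(\lambda-z)^{-1}\,dF^{B_n}(\lambda),$$
so that $m_n(z)=\tilde m_n(z)$ on $Q_n$. Then write
$$M_n^1(z)\mathbbm{1}_{Q_n}=n[\tilde m_n(z)-\E\tilde m_n(z)]\,\mathbbm{1}_{Q_n}+n[\E\tilde m_n(z)-\E m_n(z)]\,\mathbbm{1}_{Q_n}.$$

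For the first term, $g_z(x)=\phi(x^2)(x^2-z)^{-1}$ is globally Lipschitz with constant depending only on $\eta$ and on the cutoff (but not on $v_0$ or on $z\in\mathcal{C}_j$), since on the support of $\phi(x^2)$ one has $|x^2-z|$ bounded below by a constant $c(\eta)>0$. Applying Lemma \ref{lem:concen.spectral.S} therefore yields sub-Gaussian concentration of $\tilde m_n$ around $\E\tilde m_n$, and hence $\E|n(\tilde m_n-\E\tilde m_n)|^2\leq K(\eta)$ uniformly in $n$ and in $z\in\mathcal{C}_j$. For the second term, $\tilde m_n-m_n$ is supported on $Q_n^c$, so
$$n\,|\E\tilde m_n(z)-\E m_n(z)|\leq n(|\tilde m_n|_\infty+|m_n|_\infty)\,\mathbbm{P}(Q_n^c)\leq n\bigl(c(\eta)^{-1}+v_0^{-1}\bigr)\cdot o(n^{-t})$$
for any $t>0$, by Lemma \ref{lemma: concen.eig.sofs}; choosing $t$ large, this vanishes as $n\to\infty$ with $v_0$ held fixed. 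Combining the two estimates yields the claimed uniform bound and therefore the lemma.

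The main obstacle is the construction of the cutoff $\phi$ so that the resulting $g_z$ has Lipschitz constant independent of $v_0$; this is arranged by insisting that the support of $\phi$ be strictly separated from the vertical lines $\mathrm{Re}\,z=\pm r$, which is possible precisely because $r$ is chosen strictly larger than $C_u+\eta$. All the other steps amount to routine bookkeeping, since the probabilistic work has already been done in Lemma \ref{lem:concen.spectral.S} (Guionnet concentration) and in the polynomial tail bound of Lemma \ref{lemma: concen.eig.sofs}.
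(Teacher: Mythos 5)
Your proposal follows essentially the same route as the paper: replace the resolvent kernel by a modification that is Lipschitz with constant depending only on $\eta$ (the paper hard-truncates the argument via $[\lambda]_K$; you multiply by a smooth cutoff $\phi$ --- interchangeable here), feed $g_z(x)$ into the concentration Lemma \ref{lem:concen.spectral.S} to get an $O(n^{-2})$ variance bound uniform over $z\in\mathcal{C}_j$, and then use that $\mathcal{C}_j$ has length $4v_0$. The one step of yours that does not hold as written is the bound on the centering-mismatch term $n(\E\tilde m_n-\E m_n)$: on $\mathcal{C}_j$ the only deterministic bound is $|m_n(z)|\le|\mathrm{Im}\,z|^{-1}$, not $v_0^{-1}$, and $|\mathrm{Im}\,z|$ ranges down to $0$ on $\mathcal{C}_j$, so your claimed uniform-in-$z$ bound degenerates at the real axis and integrating does not rescue it directly, since $\int_{|y|\le v_0}|y|^{-2}\,dy$ diverges. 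Repairing this requires either a separate treatment of the sliver $|\mathrm{Im}\,z|\le n^{-2}$ (as the paper does for $M_n^2$) or the Bai--Silverstein device of redefining $M_n^1$ by interpolation near the real axis. To be fair, the paper's own proof silently identifies $\E[m_n\mathbbm{1}_{Q_n}]$ with $\E\int g_z\,dF_n$ and thereby elides exactly the same term; your decomposition at least makes the issue visible, but then disposes of it with an incorrect sup-norm bound.
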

    
\begin{proof}[{Proof of Lemma \ref{lem.xiqn}}]
For any $z\in\mathcal{C}_j$, and any $\lambda_i$, $i=1,...,n$, on $Q_n$, we have 
$$
|\lambda_i^2-z|\geq \eta.
$$
Define a trucation function $[\lambda]_K$, where $K=C_u+\eta$, as
\begin{align*}
    [\lambda]_K=\begin{cases}
      -K & \text{if }\lambda\leq -K,\\
      \lambda & \text{if }|\lambda|\leq K,\\
      K & \text{if }\lambda\geq K.
    \end{cases} 
\end{align*}
Additionally, define 
$$
g_z(\lambda)=g(\lambda;z)=\frac{1}{[\lambda]_K-z}.
$$
 Denote the real and imaginary parts of $g_z$ respectively as $g_{z,R}$ and $g_{z,I}$. It is not hard to see that for all $z\in \mathcal{C}_j$, $g_{z,R}(\lambda^2)$ and $g_{z,I}(\lambda^2)$ are both Lipschitz with constant $2K\eta^{-2}$. Further note that on the event $Q_n$, the following relation
 $$
 m_n(z)=n^{-1}\sum_i\frac{1}{\lambda_i-z}=n^{-1}\sum_ig(\lambda_i)=\int g(x) F_n(dx)
 $$
 is valid. Therefore, by Lemma \ref{lem:concen.spectral.S}, we have the following bound that
\begin{align*}
\text{Var}[m_n(z)\mathbbm{1}_{Q_n}]=O(\eta^{-4}n^{-2}).
\end{align*}
We complete the proof with
\begin{align*}
    \limsup_{n}\int_{\mathcal{C}_j}\E [|M_n^1(z)\mathbbm{1}_{Q_n}|^2]dz=O(v_0).
\end{align*}
\end{proof}

As an immediate corollary to Lemma \ref{cor:sos.gamma.gp}, we also have
\begin{corollary}\label{cor:f.mn1}
    Under Assumption \ref{assump_trunc}, let $f_1,\ldots,f_l$ be functions on $\mathbbm{R}$ analytic on an open interval containing (\ref{eq:open.int}), then there exists $\Lambda_n$, such that
    \begin{align*}
       ( \Lambda_n)^{-1/2}\left[\oint f_1(z)M_n^1(z)dz,\ldots, \oint f_l(z)M_n^1(z)dz\right]\rightarrow N(0,\text{Id}_l),
    \end{align*}
    with $\Lambda_n[i,j]=\oint\oint f_i(z_1)f_j(z_2)\sigma_n^2(z_1,z_2)dz_1dz_2$.The contours are closed, non-overlapping and taken in the positive direction in the complex plain, each containing the open interval (\ref{eq:open.int}).
\end{corollary}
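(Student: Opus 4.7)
The plan is to deduce this multivariate CLT from the one-dimensional statement in Lemma \ref{cor:sos.gamma.gp} by the Cram\'er-Wold device, using only linearity of the contour integral and bilinearity of the covariance kernel.

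First, I would fix arbitrary coefficients $\alpha_1,\ldots,\alpha_l\in\mathbbm{C}$ and collapse the linear combination into a single scalar contour integral:
\begin{align*}
\sum_{i=1}^l \alpha_i \oint f_i(z) M_n^1(z) dz \;=\; \oint g(z) M_n^1(z) dz, \qquad g(z):=\sum_{i=1}^l \alpha_i f_i(z).
\end{align*}
Since each $f_i$ is analytic on an open interval containing (\ref{eq:open.int}), so is the finite linear combination $g$, hence Lemma \ref{cor:sos.gamma.gp} applies to $g$ directly without further regularity verification.

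Next I would identify the limiting scalar variance in terms of $\Lambda_n$. By the bilinearity of the double contour integral in its two arguments,
\begin{align*}
\oint\oint g(z_1)g(z_2)\sigma_n^2(z_1,z_2) dz_1 dz_2 \;=\; \sum_{i,j=1}^l \alpha_i\alpha_j \oint\oint f_i(z_1)f_j(z_2)\sigma_n^2(z_1,z_2)dz_1 dz_2 \;=\; \alpha^T \Lambda_n \alpha,
\end{align*}
with $\alpha = (\alpha_1,\ldots,\alpha_l)^T$. Writing $Y_n = \big(\oint f_1(z)M_n^1(z)dz,\ldots,\oint f_l(z)M_n^1(z)dz\big)^T$, Lemma \ref{cor:sos.gamma.gp} then gives
\begin{align*}
\frac{\alpha^T Y_n}{\sqrt{\alpha^T \Lambda_n \alpha}} \convd CN(0,1)
\end{align*}
for every fixed $\alpha\in\mathbbm{C}^l$.

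Finally I would invoke the Cram\'er-Wold device: the above displays say that, after the matrix normalization $\Lambda_n^{-1/2}$, every complex linear functional of $\Lambda_n^{-1/2} Y_n$ converges to the corresponding linear functional of a standard $N(0,\text{Id}_l)$ vector, so $\Lambda_n^{-1/2} Y_n \convd N(0,\text{Id}_l)$. There is no genuine obstacle here; all the analytic work (the martingale CLT for $M_n^1$, the tightness argument, and the contour truncation near the real axis) has already been carried out in Lemmas \ref{cor:sos.gamma.gp}, \ref{lem.xiqn}, and Sections \ref{subsec: mn1.finite}--\ref{subsec: mn1.tight}. The only step that might deserve a line of comment is the variance identification by bilinearity, which follows immediately since the $\alpha_i$ are constants pulled outside the contour integrals.
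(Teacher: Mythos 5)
Your proposal is correct and follows essentially the same route as the paper: the paper's proof likewise invokes the Cram\'er--Wold device, pulls the coefficients inside the contour integral to form $g=\sum_i\alpha_i f_i$, and applies Lemma \ref{cor:sos.gamma.gp} to that single analytic function. Your additional remarks on the bilinear identification of $\alpha^T\Lambda_n\alpha$ simply make explicit what the paper leaves implicit.
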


\begin{proof}[Proof of Corollary \ref{cor:f.mn1}]
By the Cramer-Wold device, it suffices to prove that 
  \begin{align*}
      \frac{\sum \alpha_i\oint f_i(z)M_n^1(z)dz}{\sqrt{\alpha^T\Lambda_n\alpha}}=\frac{\oint \sum \alpha_i f_i(z)M_n^1(z)dz}{\sqrt{\alpha^T\Lambda_n\alpha}}\convd N(0,1).
  \end{align*}


\end{proof}

\section{Bias calculation}\label{sec:bias}
In this section, we study the deterministic component
\begin{align*}
    M_n^2(z)=\E \text{Tr}(B_n-z)^{-1}-\text{Tr}(\deteqv{B}_n-z)^{-1}=n(\E m_n(z)-\deteqv{m}_n(z)).
\end{align*}
For any $v_0>0$, define $C_0=\{z:\text{Im}z>v_0\}$.
 We shall verify that there exists $\mu_n(\cdot)$ computed from $L_r$, $\Sigma_r$, $n$ and $N$, such that for all $z\in C_0$,
\begin{align}\label{eq:mn2.conv.def}
    M_n^2(z)-\mu_n(z)=o_p(1).
\end{align}
Following a similar approach used for covariance calculations in Section \ref{subsec:cov.replace}, we revisit matrix $R$ defined in Equation (\ref{eq:def.R}). Our strategy involves substituting the random matrix $D$ with the deterministic $R$ and computing the residuals. 
Let
\begin{align*}
     M_n^2(z)=n[\E m_n(z)-\deteqv{m}_n(z)]=\left(\E \text{Tr}D^{-1}-\text{Tr}R^{-1}\right)+\left(\text{Tr}R^{-1}-n\deteqv{m}_n(z)\right).
\end{align*}
For the rest of this section, we analyze the convergence of these two components separately. To simplify the notation, define 
\begin{align*}
    d_{n0}&=\E \text{Tr}D^{-1}-\text{Tr}R^{-1}\\
    d_{nr}&=\E \text{Tr}(D^{-1}\Sigma_r)-\text{Tr}(R^{-1}\Sigma_r),r=1\ldots,k.
\end{align*}
\subsection{Convergence of $d_{nr}$}
By definition, we have
\begin{align*}
   D(z)-R(z)=\sum_{j=1}^N r_jr_j^T - \frac{1}{N}\sum_{j=1}^N\psi_j(z)T_j.
\end{align*}
Applying property (\ref{eq:D.Dj}) and the definition of $\beta_j$ gives
\begin{align}
    R^{-1}(z)-D^{-1}(z)
    &=R^{-1}(D-R)D^{-1}\\
    &=R^{-1}\left(\sum_{j=1}^N r_jr_j^TD^{-1} - \frac{1}{N}\sum_{j=1}^N\psi_j(z)T_jD^{-1}\right)\\
    &=R^{-1}\left(\sum_{j=1}^N\beta_j(z)r_jr_j^TD_j^{-1} - \frac{1}{N}\sum_{j=1}^N\psi_j(z)T_jD^{-1}\right).\label{eq:D.inv.R.inv}
\end{align}
Taking the trace and the expectation, then
\begin{align*}
    d_{n0}&=\E \text{Tr}D^{-1}-\text{Tr}R^{-1}\\
    &=-\sum_{j=1}^N\left(\E \beta_j(z)\text{Tr}r_j^TD_j^{-1}R^{-1}r_j - \frac{1}{N}\psi_j(z)\E \text{Tr}R^{-1}T_jD^{-1}\right)\\
    &=-\sum_{j=1}^N\E \beta_j(z)\left(\text{Tr}r_j^TD_j^{-1}R^{-1}r_j -\frac{1}{N}\E \text{Tr}R^{-1}T_jD_j^{-1}\right)\\
    &\quad - \frac{1}{N}\sum_{j=1}^N\E \beta_j(z)\left(\E \text{Tr}R^{-1}T_jD_j^{-1}-\E \text{Tr}R^{-1}T_jD^{-1}\right)\\
    &\quad - \frac{1}{N}\sum_{j=1}^N\E (\beta_j(z)-\psi_j(z))\left(\E \text{Tr}R^{-1}T_jD^{-1}\right)\\
    &:= \mathcal{J}_1+\mathcal{J}_2+\mathcal{J}_3.
\end{align*}
Adopting a similar approach outlined in Section \ref{subsec:cov.replace} for covariance calculation, we individually examine each of the three components. Detailed computations are deferred to Section \ref{subsec:sec:dn0.comp} in the supplementary appendices.
Collecting the terms yields
\begin{align*}
    d_{n0}
    &=\frac{1}{N^2}\sum_{j=1}^N\psi_j^2(z)\E [\text{Tr}R^{-1}T_jD^{-1}T_jD^{-1}]\\
    &\quad-\frac{1}{N^3}\sum_{j=1}^N\psi_j^3(z)\E [\text{Tr}D^{-1}T_jD^{-1}T_j]\E [\text{Tr}R^{-1}T_jD^{-1}]+o(1).
\end{align*}
Similarly, for each $r=1,\ldots,k$,  we have
\begin{align*}
    d_{nr}&=\E \text{Tr}\left(D^{-1}\Sigma_r\right)-\text{Tr}\left(R^{-1}\Sigma_r\right)\\
    &=\frac{1}{N^2}\sum_{j=1}^N\psi_j^2(z)\E [\text{Tr}D^{-1}T_jD^{-1}\Sigma_rR^{-1}T_j]\\
    &\quad-\frac{1}{N^3}\sum_{j=1}^N\psi_j^3(z)\E [\text{Tr}D^{-1}T_jD^{-1}T_j]\E [\text{Tr}R^{-1}T_jD^{-1}\Sigma_r]+o(1).
\end{align*}
Following a similar decomposition of $R^{-1}-D^{-1}$ as established in (\ref{eq:decom.rj.dj}), we apply Lemma \ref{lem:A123} to substitute $D^{-1}$ with $R^{-1}$ and conclude that
\begin{align}
    &\frac{1}{N}\E [\text{Tr}R^{-1}\Sigma_rD^{-1}\Sigma_s]=\frac{1}{N}\E [\text{Tr}R^{-1}\Sigma_rR^{-1}\Sigma_s]+o(1)\label{eq:dd,dr,rr}\\
   &\frac{1}{N}\E [\text{Tr}R^{-1}\Sigma_rR^{-1}\Sigma_sD^{-1}]=\frac{1}{N}\E [\text{Tr}R^{-1}\Sigma_rR^{-1}\Sigma_sR^{-1}]+o(1)\label{eq:rdd,rrd,rrr}\\
   &\frac{1}{N}\E [\text{Tr}R^{-1}\Sigma_rD^{-1}]=\frac{1}{N}\E [\text{Tr}R^{-1}\Sigma_rR^{-1}]+o(1).
\end{align} 
The remaining terms $\E [\text{Tr}D^{-1}\Sigma_rD^{-1}R^{-1}\Sigma_s]$,  $\E [\text{Tr}D^{-1}\Sigma_rD^{-1}\Sigma_s]$, $\E [\text{Tr}D^{-1}\Sigma_rD^{-1}\Sigma_sR^{-1}\Sigma_l]$ for $r$, $s$, $l=1,\ldots, k$, can be evaluated following a similar argument leading to the evaluation of $ \text{Tr}(\E_jA_1(z_1)T_j\E_jD_j^{-1}(z_2)\Sigma_r)$ in (\ref{eq:A1..}). We present the results in the following lemma. 
\begin{lemma}\label{lem:conv.dnr}
    Under Assumption \ref{assump_trunc}, for any $z\in C_0$, and any deterministic matrix $M$ with bounded spectral norm, for $a=1,\ldots,k$, we have
    \begin{align*}
        \text{Tr}(D^{-1}\Sigma_aD^{-1}M)
    &=\text{Tr}(R^{-1}\Sigma_aR^{-1}M)\\
    &+\frac{1}{N^2}\sum_{j=1}^N\psi_j^2\E \text{Tr}D^{-1}T_jD^{-1}\Sigma_a\E \text{Tr}R^{-1}T_jR^{-1}M+ a'(z),
    \end{align*}
    where $\E a'(z)\leq O(N^{1/2})$.
\end{lemma}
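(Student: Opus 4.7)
The plan is to mimic the two-scale substitution argument that was used earlier in Section \ref{sec:bias} to reduce $d_{n0}$ and $d_{nr}$ to deterministic form, combined with the diagonal-term computation that produced $\alpha_4$ in Section \ref{subsec:cov.replace}. The starting point is the non-conditional analog of (\ref{eq:decom.rj.dj}): writing $R + D = \sum_j r_j r_j^T - N^{-1}\sum_j \psi_j T_j$ and using $R^{-1}(R+D)D^{-1} = R^{-1} + D^{-1}$ together with $r_j^T D^{-1} = \beta_j r_j^T D_j^{-1}$ and the splitting $\beta_j = \psi_j + (\beta_j - \psi_j)$, I obtain $R^{-1} + D^{-1} = \tilde{A}_1 + \tilde{A}_2 + \tilde{A}_3$, where
\begin{align*}
\tilde{A}_1 &= \sum_j \psi_j R^{-1}(r_j r_j^T - N^{-1}T_j) D_j^{-1}, \\
\tilde{A}_2 &= \sum_j (\beta_j - \psi_j) R^{-1} r_j r_j^T D_j^{-1}, \\
\tilde{A}_3 &= \tfrac{1}{N}\sum_j \psi_j R^{-1} T_j (D_j^{-1} - D^{-1}).
\end{align*}
These satisfy the same bounds as in Lemma \ref{lem:A123}: for any deterministic matrix $M'$ with bounded operator norm, $\mathbb{E}|\text{Tr}(\tilde{A}_1 M')|, \mathbb{E}|\text{Tr}(\tilde{A}_2 M')| \leq O(N^{1/2})$ and $|\text{Tr}(\tilde{A}_3 M')| \leq O(1)$, by the very same arguments (Lemma \ref{lem:concen}, the bound on $|\beta_j-\psi_j|$ in Lemma \ref{app:lem.prop}, and $D_j^{-1}-D^{-1} = \beta_j D_j^{-1}r_jr_j^T D_j^{-1}$).

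Next I substitute $D^{-1} = -R^{-1} + \tilde{A}$ with $\tilde{A} = \tilde{A}_1+\tilde{A}_2+\tilde{A}_3$ into $\text{Tr}(D^{-1}\Sigma_a D^{-1}M)$, giving
\begin{align*}
\text{Tr}(D^{-1}\Sigma_a D^{-1}M) = \text{Tr}(R^{-1}\Sigma_a R^{-1}M) - \text{Tr}(R^{-1}\Sigma_a\tilde{A}M) - \text{Tr}(\tilde{A}\Sigma_a R^{-1}M) + \text{Tr}(\tilde{A}\Sigma_a\tilde{A}M).
\end{align*}
In the two linear cross terms, the factors $R^{-1}\Sigma_a M$ and $M \Sigma_a R^{-1}$ (after a cyclic shift of the trace) are deterministic with bounded spectral norm, so the bounds above yield $O(N^{1/2})$ in expectation. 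In the quadratic term, any product $\tilde{A}_i\Sigma_a\tilde{A}_j$ involving at least one of $\tilde{A}_2, \tilde{A}_3$ is handled by Cauchy--Schwarz against the remaining $O(N^{1/2})$ factor, so only $\text{Tr}(\tilde{A}_1\Sigma_a\tilde{A}_1 M)$ can contribute to the leading correction.

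The main computation, and the principal obstacle, is the extraction of the leading piece from $\text{Tr}(\tilde{A}_1\Sigma_a\tilde{A}_1 M)$. Expand it as a double sum over $i,j$ of $\psi_i\psi_j\,\text{Tr}[R^{-1}(r_ir_i^T-N^{-1}T_i)D_i^{-1}\Sigma_a R^{-1}(r_jr_j^T-N^{-1}T_j)D_j^{-1}M]$. Off-diagonal contributions ($i\neq j$) are shown to be negligible in expectation: up to an $O(N^{1/2})$ error coming from expanding $D_i^{-1}$ via $D_i^{-1}-D_{ij}^{-1} = -\beta_{ij}D_{ij}^{-1}r_jr_j^T D_{ij}^{-1}$ (and symmetrically in $D_j^{-1}$), the remaining factors are independent of $r_j$ so the $\mathbb{E}[r_jr_j^T - N^{-1}T_j] = 0$ identity kills the mean. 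For the diagonal contributions ($i=j$), applying Lemma \ref{lem:concen} to the quadratic-form product $(r_j^T X r_j - N^{-1}\text{Tr}(T_j X))(r_j^T Y r_j - N^{-1}\text{Tr}(T_j Y))$ isolates the $2N^{-2}\text{Tr}(T_j X T_j Y)$-type term, yielding
\begin{align*}
\text{Tr}(\tilde{A}_1\Sigma_a\tilde{A}_1 M) = \tfrac{1}{N^2}\sum_j \psi_j^2\,\text{Tr}(T_j D_j^{-1}\Sigma_a R^{-1})\,\text{Tr}(T_j D_j^{-1}M R^{-1}) + O_{\mathbb{E}}(N^{1/2}).
\end{align*}
The final step is to replace $D_j^{-1}$ by $D^{-1}$ using the rank-one update (with error bounded via the boundedness of $\beta_j$ and Lemma \ref{lem:concen}), then replace $D^{-1}$ by $R^{-1}$ in the second trace using exactly the substitution strategy that gave (\ref{eq:dd,dr,rr}), and finally replace the random traces by their expectations (self-averaging; the fluctuation of each trace is $O(1)$ by Lemma \ref{lem:concen.spectral.S}, and once multiplied by $\psi_j^2/N^2$ and summed over $j$ the resulting aggregate fluctuation is $O(N^{1/2})$). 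Collecting all errors produces the stated identity with $\mathbb{E}\,a'(z) \leq O(N^{1/2})$. The delicate point throughout is that only \emph{deterministic} test matrices enjoy the sharper $O(N^{1/2})$ bound in Lemma \ref{lem:A123}(2), so one must be careful to order the substitutions so that the objects being contracted against the $\tilde{A}_i$ have been made deterministic first.
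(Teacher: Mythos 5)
Your overall strategy---the unconditional decomposition of $R^{-1}-D^{-1}$ into analogues $G_1,G_2,G_3$ of $A_1,A_2,A_3$, killing the pieces tested against deterministic matrices via Lemma \ref{lem:A123}, and extracting the leading correction from quadratic-form concentration---is the same as the paper's. But the way you organize the substitution introduces a gap that changes the answer. The paper substitutes only \emph{one} resolvent, isolates the non-negligible piece as $-\text{Tr}(G_1\Sigma_aD^{-1}M)$ with the second $D^{-1}$ left intact, and then runs the $\alpha_4$-type computation: expanding that intact resolvent as $D^{-1}=D_i^{-1}-\beta_iD_i^{-1}r_ir_i^TD_i^{-1}$ pairs the centered factor $r_ir_i^T-N^{-1}T_i$ coming from $G_1$ with the copy of $r_i$ hiding inside the \emph{other} resolvent, and the product of the two quadratic forms in $r_i$ has nonvanishing mean of the form $N^{-2}\text{Tr}(T_iD_i^{-1}\Sigma_aD_i^{-1})\text{Tr}(T_iD_i^{-1}MR^{-1})$ per index---note the two $D$'s around $\Sigma_a$, which is exactly the structure $\E\text{Tr}(D^{-1}T_jD^{-1}\Sigma_a)$ required by the statement. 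You instead substitute both slots and keep only the diagonal of $\text{Tr}(\tilde A_1\Sigma_a\tilde A_1M)$. That diagonal yields $N^{-2}\sum_j\psi_j^2\text{Tr}(T_jD_j^{-1}\Sigma_aR^{-1})\text{Tr}(T_jD_j^{-1}MR^{-1})$, whose first factor sandwiches $\Sigma_a$ between one $D_j^{-1}$ and one $R^{-1}$. Since $\text{Tr}(T_jD^{-1}\Sigma_aR^{-1})$ equals $\text{Tr}(T_jR^{-1}\Sigma_aR^{-1})$ up to $O(N^{1/2})$ in expectation (the test matrix $\Sigma_aR^{-1}T_j$ is deterministic), while $\text{Tr}(T_jD^{-1}\Sigma_aD^{-1})$ differs from the all-$R$ trace by the full $O(N)$ correction this lemma is computing, your expression does not reduce to the stated one within the $O(N^{1/2})$ tolerance. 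The missing mass sits precisely in the off-diagonal terms $i\neq j$ that you discard: the rank-one correction $D_i^{-1}-D_{ij}^{-1}=-\beta_{ij}D_{ij}^{-1}r_jr_j^TD_{ij}^{-1}$ is not an $O(N^{1/2})$ error, because pairing it with the centered factor $r_jr_j^T-N^{-1}T_j$ from the other $\tilde A_1$ produces an $O(1)$ mean per pair (this is the whole point of the mechanism behind (\ref{eq:A1.3}) and (\ref{eq:alpha.4})), and with order $N^2$ off-diagonal pairs these contributions are of the same order as the diagonal.

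A second, smaller issue: the mixed quadratic terms $\text{Tr}(\tilde A_i\Sigma_a\tilde A_jM)$ with at least one index in $\{2,3\}$ cannot be dispatched ``by Cauchy--Schwarz against the remaining $O(N^{1/2})$ factor.'' Lemma \ref{lem:A123} only bounds traces of the $\tilde A_i$ against test matrices of bounded spectral norm; it gives no operator- or Hilbert--Schmidt-norm control on the $\tilde A_i$ themselves, so there is nothing available to Cauchy--Schwarz against (naive triangle-inequality bounds on the Frobenius norm of $\tilde A_1$ are of order $N$, far too large). The paper's sequential substitution avoids ever having to estimate these products: after the first substitution the only surviving term is $\text{Tr}(G_1\Sigma_aD^{-1}M)$, which is then expanded directly. (There is also a sign slip at the start: $\sum_jr_jr_j^T-N^{-1}\sum_j\psi_jT_j$ equals $D-R$, not $R+D$, under the convention $R=N^{-1}\sum_j\psi_jT_j-zI$; this is cosmetic.)
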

In addition to the terms $\deteqv{b}_j$, $h_{N+1}^{ab}$ and $\deteqv{R}$, which are defined in Lemma \ref{lem:Mn1.prelim}, (\ref{eq:xi.h.Lam}) and (\ref{eq:R.deteqv}), for each $a,b,c=1,\ldots,k$, further define
\begin{gather}
    \Xi_0^{a}=\frac{1}{N}\text{Tr}\left[\deteqv{R}^{-1}(z)\Sigma_a\deteqv{R}^{-1}(z)\right],\quad \Xi_1^{ab}=\frac{1}{N}\text{Tr}\left[\deteqv{R}^{-1}(z)\Sigma_a\deteqv{R}^{-1}(z)\Sigma_b\right],\label{eq:xi.0.a}\\ \Xi_2^{ab}=\frac{1}{N}\text{Tr}\left[\deteqv{R}^{-1}(z)\Sigma_a\deteqv{R}^{-1}(z)\deteqv{R}^{-1}(z)\Sigma_b\right],\\
    \Xi_3^{abc}=\frac{1}{N}\text{Tr}\left[\deteqv{R}^{-1}(z)\Sigma_a\deteqv{R}^{-1}(z)\Sigma_b\deteqv{R}^{-1}(z)\Sigma_c\right],\quad h^{abc}=\frac{1}{N}\sum_{j=1}^N\deteqv{b}_j^3(z)l_{aj}^2l_{bj}^2l_{cj}^2.
\end{gather}
For subsequent analyses, we set $h^{ab}_{N+1}=h^{ab}_{N+1}(z,z)$. Based on the formula derived in Lemma \ref{lem:conv.dnr}, let $\zeta_1^{ab}$, $\zeta_2^{ab}$, and $\zeta_3^{abc}$ denote the solutions to the systems of equations below:
\begin{align*}
    &\begin{cases}
        \zeta_1^{ab}=\Xi_1^{ab}+\sum_{r=1}^k\sum_{s=1}^kh_{N+1}^{rs}\zeta_1^{ar}\Xi_1^{bs}\quad a,b=1,\ldots,k.
    \end{cases}\\
    &\begin{cases}
        \zeta_2^{ab}=\Xi_2^{ab}+\sum_{r=1}^k\sum_{s=1}^kh_{N+1}^{rs}\zeta_1^{ar}\Xi_2^{bs}\quad a,b=1,\ldots,k.
    \end{cases}\\
    &\begin{cases}
        \zeta_3^{abc}=\Xi_3^{abc}+\sum_{r=1}^k\sum_{s=1}^kh_{N+1}^{rs}\zeta_1^{ar}\Xi_3^{bcs}\quad {a,b,c=1,\ldots,k.}
    \end{cases}
\end{align*}
We are now equipped to define the equivalents to ${d}_{nr}$, $r=0,\ldots,k$ as
\begin{align}\label{eq:dnr.bias.deteqv}
    \deteqv{d}_{n0}&=\sum_{a=1}^k\sum_{b=1}^kh_{N+1}^{ab}\zeta_2^{ab}-\sum_{a=1}^k\sum_{b=1}^k\sum_{c=1}^kh^{abc}\zeta_1^{ab}\Xi_0^c,\\
    \deteqv{d}_{nr}&=\sum_{a=1}^k\sum_{b=1}^kh_{N+1}^{ab}\zeta_3^{abr}-\sum_{a=1}^k\sum_{b=1}^k\sum_{c=1}^kh^{abc}\zeta_1^{ab}\Xi_1^{cr}, \quad {r=1,\ldots k},
\end{align}
satisfying ${d}_{nr}=\deteqv{d}_{nr}+o(1)$, $r=0,\ldots k$.
\subsection{Convergence of $\text{Tr}R^{-1}-n\deteqv{m}_n(z)$}
From the fact that $A^{-1}+B^{-1}=A^{-1}(A+B)B^{-1}$, along with property (\ref{eq:mn.deteqv.g1}),
\begin{align*}
    &\text{Tr}R^{-1}-n\deteqv{m}_n(z)(z)\\
    =&\text{Tr}\left({\sum_{s=1}^k}(N^{-1}\sum l_{sj}^2\psi_j(z))\Sigma_s-z\right)^{-1}+\text{Tr}\left(\sum_{s=1}^kz\deteqv{g}_1^{(s)}\Sigma_s+z \right)^{-1}\\ 
    =&-{\sum_{s=1}^k}(N^{-1}\sum_{j=1}^N l_{sj}^2\psi_j(z)+z\deteqv{g}_1^{(s)})\text{Tr}(R^{-1}\Sigma_s\deteqv{R}^{-1}),
\end{align*}
with $\deteqv{R}$ defined in (\ref{eq:R.deteqv}). 
By definitions of $\psi_j$ (\ref{eq:def.psi}), $g_i^{(r)}$ (\ref{eq:random.stiel}) and $\deteqv{g}_i^{(r)}$ (\ref{eq:m0}), we have
\begin{align*}
    \psi_j(z)=\frac{1}{1+\sum_{r=1}^kl_{rj}^2\E g_2^{(r)}},\quad z\deteqv{g}_1^{(s)}&=-\frac{1}{N}\sum_{j=1}^N\frac{l_{sj}^2}{1+\sum_{r=1}^kl_{rj}^2\deteqv{g}_2^{(r)}}.
\end{align*}
As a result, we get
\begin{align*}
    N^{-1}\sum_{j=1}^N l_{sj}^2\psi_j(z)+z\deteqv{g}_1^{(s)}
    &=\sum_{r=1}^k(\E {g}_2^{(r)}-\deteqv{g}_2^{(r)})\frac{1}{N}\sum_{j=1}^Nl_{sj}^2l_{rj}^2\deteqv{b}_j(z)\psi_j(z),
\end{align*}
where $\deteqv{b}$ is defined in Lemma \ref{lem:Mn1.prelim}.
Therefore,
\begin{align}\label{eq:mn2.interm}
     M_n^2(z)=&d_{n0}+\sum_{r=1}^kn(\E {g}_2^{(r)}-\deteqv{g}_2^{(r)}){\sum_{s=1}^k}\frac{1}{N}\sum_{j=1}^Nl_{sj}^2l_{rj}^2\deteqv{b}_j(z)\psi_j(z)\frac{1}{n}\text{Tr}(R^{-1}\Sigma_s\deteqv{R}^{-1}).
\end{align}
Similarly, for each $r=1,\ldots,k$,
\begin{align}
    &n[\E {g}_2^{(r)}-\deteqv{g}_2^{(r)}]=n/N\left[\text{Tr}\E D^{-1}\Sigma_r-\text{Tr}R^{-1}\Sigma_r\right]+n/N\left[\text{Tr}R^{-1}\Sigma_r-n\deteqv{g}_2^{(r)}\right]\nonumber\\
    =&n/Nd_{nr}-\sum_{t=1}^kn(\E {g}_2^{(t)}-\deteqv{g}_2^{(t)}){\sum_{s=1}^k}\frac{1}{N}\sum_{j=1}^Nl_{sj}^2l_{rj}^2\deteqv{b}_j(z)\psi_j(z)\frac{1}{N}\text{Tr}(R^{-1}\Sigma_s\deteqv{R}^{-1}\Sigma_r).\label{eq:ng}
\end{align}
From the equivalents $\deteqv{d}_{nr}$ defined in (\ref{eq:dnr.bias.deteqv}), we can further define the equivalents $\nu_r$ as solutions to the system of equations
\begin{align}
    \begin{cases}
        \nu_r=n/N\deteqv{d}_{nr}-\sum_{t=1}^T\nu_t{\sum_{s=1}^k}h_{N+1}^{st}\Xi_1^{sr}\quad r=1,\ldots, k,
    \end{cases}\label{eq:nu.r}
\end{align}
such that $n[\E {g}_2^{(r)}-\deteqv{g}_2^{(r)}]-\nu_r=o(1)$.
Finally, we define
\begin{align*}
    \mu_n(z)=\deteqv{d}_{n0}(z)+n/N\sum_{r=1}^k\nu_r(z){\sum_{s=1}^k}h_{N+1}^{rs}(z,z)\Xi_0^s(z),
\end{align*}
and conclude that $M_n^2(z)=\mu_n(z)+o(1)$.

\section*{Acknowledgements}
The authors are supported in part by NIH Grant R01 GM134483 and NSF Grant DMS-1811614.

\newpage
\printbibliography
\newpage
\appendix
\section{Preliminaries and tools}
\subsection{Construction of Stieltjes transform}\label{sec:comp.stieltjes}
Recall that matrix $B_n$ defined as (\ref{eq:Bn}) can be interpreted as a sum of squares matrix (\ref{eq:nested.sim.1}) within a nested $k$-level random effects model, with $(n,N)=(p,F_1)$. Following the notations defined in Section \ref{subsec:multi.random}, we can equivalently write $L_r$ as $L_r=V_1^TU_r(U_r^TU_r)U_r^TV_1$, where $V_1=U_1(U_1^TU_1)^{-1/2}$. Similarly, we write $$B_p=F_1^{-1}Y^TU_1(U_1^TU_1)^{-1}U_1^TY=F_1^{-1}Y^TV_1V_1^TY.$$
Let $\deteqv{b}_r=-z\deteqv{g}_1^{(r)}$, $\deteqv{a}_r=F_1/F_r\deteqv{g}_2^{(r)}$.
Following the construction proposed in Theorem 1.2 of \cite{fan2017eigenvalue}, define $\deteqv{B}_p$ as 
\begin{align}\label{app:eq.det.bn}
    \deteqv{B}_p=-z\sum_{r=1}^k\deteqv{g}_1^{(r)}\Sigma_r.
\end{align}
Then, we can immediately define
\begin{align}
\begin{cases}
    zm_{\deteqv{B}_p}&=-\frac{1}{p}\text{Tr}\left((\sum_{s=1}^k\deteqv{g}_1^{(s)}\Sigma_s+\text{Id} )^{-1}\right)\\
    z\deteqv{g}_2^{(r)}&=-\frac{1}{F_1}\text{Tr}\left((\sum_{s=1}^k\deteqv{g}_1^{(s)}\Sigma_s+\text{Id})^{-1}\Sigma_r\right),r=1,\ldots,k
    \end{cases}
\end{align}
To compute $\deteqv{g}_1$, we generalize the approach in Appendix A.1 of \cite{fan2017eigenvalue}. In particular, let $Q=\text{diag}(Q_1,\ldots, Q_k)\in\mathbbm{R}^{F_+\times F_+}$, where $Q_r=(U_r^TU_r)^{-1}U_r^T[V_1|V_r]$, $F^+=\sum_rF_r$. Here, $V_r$ represents the orthogonal basis of $\text{col}(U_r)\setminus \text{col}(U_1)$. In other words, $V_1^TV_r=0$, $r>1$. Now, define $U=(\sqrt{F_1}U_1|\ldots|\sqrt{F_k}U_k)$ and
\begin{align*}
    M=F_1^{-1}Q^TU^TV_1V_1^TUQ=RR^T,
\end{align*}
where $R^T=[L_1|\sqrt{F_2/F_1}L_2 \quad 0|\ldots|\sqrt{F_k/F_1}L_k \quad 0]$ is $F_1\times F_+$. Now, we would like to compute the block traces of
\begin{align*}
    S=(\text{Id}+MD(a))^{-1}M=(\text{Id}+RR^TD(a))^{-1}RR^T.
\end{align*}
By the Woodbury matrix identity,
\begin{align*}
    S=(\text{Id}-R(\text{Id}+R^TD(a)R)^{-1}R^TD(a))RR^T=R(\text{Id}+R^TD(a)R)^{-1}R^T.
\end{align*}
Therefore, $\deteqv{a}_r=F_1^{-1}\text{Tr}\left((\text{Id}+\sum_{s=1}^kF_s/F_1\deteqv{a}_sL_s^2)^{-2}L_r^2\right)$, and a linear transformation yields
\begin{align*}
    z\deteqv{g}_1^{(r)}=-\frac{1}{F_1}\text{Tr}\left((\sum_{s=1}^k\deteqv{g}_1^{(s)}\Sigma_s+\text{Id})^{-1}L_r^2\right),r=1,\ldots,k.
\end{align*}


\subsection{Proof of Lemma \ref{lem:det.equiv}}
We shall adapt the proof strategy for Theorem 1.2 in \cite{fan2017eigenvalue}. As derived in Section \ref{subsec:multi.random}, any matrix of the form (\ref{eq:Bn}) can be interpreted as a sum of squares matrix for a nested multivariate linear random effects model as defined in (\ref{eq:nested.ori}). Plugging in $Y$ defined in (\ref{eq:Y}), we have
\begin{align*}
    B_p=\sum_{r,s=1}^k \alpha_r^T\mathsf{F}_{rs}\alpha_s,
\end{align*}
where $\mathsf{F}_{rs}=\frac{\sqrt{F_rF_s}}{F_1}U_r^TU_1(U_1^TU_1)^{-1}U_1^TU_s$, and $\alpha_r=\sqrt{F_r}G_r\Sigma_r^{1/2}$, with $G_r\in\mathbbm{R}^{F_r\times p}$, $r=1,\ldots,k$ being independent matrices with i.i.d. Gaussian entries of variance $F_r^{-1}$. Now, define
\begin{align*}
     W=B_p=\frac{1}{F_1}\sum_{r,s=1}^k \Sigma_r^{1/2}G_r^T\mathsf{F}_{rs}G_s\Sigma_s^{1/2},
\end{align*}
Applying Theorem 1.2 in \cite{fan2017eigenvalue}, we immediately have 
\begin{align*}
    m_{n}-\deteqv{m}_{n}=o(1).
\end{align*}
To prove the convergence of $g_i^{(j)}-\deteqv{g}_i^{(j)}$, we adapt a similar proof. First, we define some notations. Recall the definitions of non-commutative probability space, rectangular probability space, and $\mathcal{B}$-valued probability space in Section 3 of \cite{fan2017eigenvalue}. 
Following the construction proposed in Section 4.1 of \cite{fan2017eigenvalue}, let $O_0$,...,$O_{2k}$ be independent Haar-distributed orthogonal matrices, and consider the transformations
    \begin{align*}
        \Sigma_r^{1/2}\rightarrow H_r:= O_r^T\Sigma_r^{1/2}O_0,\quad \mathsf{F}_{rs}\rightarrow O_{k+r}^T\mathsf{F}_{rs}O_{k+s}.
    \end{align*}
Now, $m_r$ and $n_r$ in Section 4.1 of \cite{fan2017eigenvalue} respectively correspond to $p$ and $F_r$ in our case. Similarly, let $N=p+\sum_rm_r+\sum_rn_r$, we define the elements $\Tilde{W}$, $\{\Tilde{F}_{rs}\}_{r,s=1}^k$, $\{\Tilde{H}_r\}_{r=1}^k$, $\{\Tilde{G}_r\}_{r=1}^k$, $P_0,\ldots, P_{2k}$, $\{f_{rs}\}_{r,s=1}^k$, $\{h_r\}_{r=1}^k$, $\{g_r\}_{r=1}^k$, $p_0,\ldots, p_{2k}$, and the two rectangular spaces 
$(\mathbbm{C}^{N\times N},N^{-1}\text{Tr},P_0,\ldots,P_{2k})$, $(\mathcal{A},\tau,p_0,\ldots,p_{2k})$, the  sub-* algebra $\mathcal{D}=\langle p_r:0\leq r\leq 2k\rangle$, and the von Neumann sub-algebra $\mathcal{H}=\langle \mathcal{D}, \{h_r\}\rangle_{W^*}
$, where $\langle\cdot\rangle_{W^*}$ denotes the ultraweak closure. 

For $z\in\mathbbm{D}_0=\{z\in\mathbbm{C}^+:\text{Im}z>C_0\}$ with $C_0$ large enough, define 
\begin{align*}
        \alpha_r(z)=\tau_r(h_rG_{\omega}^{\mathcal{H}}(z)h_r^*)
    \end{align*}
as in equation (4.8) of \cite{fan2017eigenvalue}, where $G_{\omega}^{\mathcal{H}}(z)$ stands for $\mathcal{H}$-valued Cauchy transform of $\omega$ for $z$ defined in Section 3.3 of \cite{fan2017eigenvalue}, and $\tau_r$ is defined as $\tau_r(a)=\tau(p_rap_r)/\tau(p_r)$ for any $a$. 
Following the argument leading to (4.23) in \cite{fan2017eigenvalue}, we can verify that
\begin{align*}
    \deteqv{g}_2^{(r)}=-\frac{1}{n_1}\text{Tr}\left((z\text{Id}-{\textstyle\sum}_{s=1}^k\deteqv{b}_s\Sigma_s)^{-1}\Sigma_r\right)=-\frac{m_r}{n_1}\alpha_r,
\end{align*}
where $\deteqv{b}_r$ is defined above equation (\ref{app:eq.det.bn}) in Section \ref{sec:comp.stieltjes}. Following the argument in Step 4 in the proof of Lemma 4.4 in \cite{fan2017eigenvalue}, we can further verify that 
    \begin{align*}
        \deteqv{g}_2^{(r)}(z)&=-\frac{N}{n_1}\sum_{l=0}^{\infty}z^{-(l+1)}\tau(\omega^lh_r^*h_r).
    \end{align*}
By definition,  we also have
    \begin{align*}
   {g}_2^{(2)}=\frac{N}{n_1}N^{-1}\text{Tr}\left[(W-z)^{-1}\Sigma_r\right]&=\frac{N}{n_1}\sum_{l=0}^{\infty} z^{-(l+1)}N^{-1}\text{Tr}\left[W^l\Sigma_r\right]\\
   &=\frac{N}{n_1}\sum_{l=0}^{\infty} z^{-(l+1)}N^{-1}\text{Tr}\left[W^lH_r^*H_r\right].
\end{align*}
Applying Theorem 3.10 and adapting the proof of Corollary 3.11 in \cite{fan2017eigenvalue}, we have for $z\in\mathbbm{D}$,
\begin{align*}
    \deteqv{g}_2^{(r)}(z)-{g}_2^{(r)}(z)\convas 0.
\end{align*}

Finally, since $\deteqv{g}_1^{(r)}$ and $g_1^{(r)}$ are respectively defined as linear functions of $\deteqv{g}_2^{(r)}$ and $g_2^{(r)}$, we immediately have that
\begin{align*}
    \deteqv{g}_1^{(r)}(z)-{g}_1^{(r)}(z)\convas 0.
\end{align*}
Since $\deteqv{g}_i^{(r)}-{g}_i^{(r)}$ is bounded over $\{z\in\mathbbm{C}^+:\text{Im}z>\epsilon\}$ for any $\epsilon>0$, Lemma C.1 in \cite{fan2017eigenvalue} implies that $\deteqv{g}_i^{(r)}-{g}_i^{(r)}\convas 0$ for any $z\in\mathbbm{C}^+$.

\subsection{Proof of Lemma \ref{lem:trunc}}
 Before proceeding with the proof, we note that our proof is self-contained, it simplifies the justification for the truncation step described by \cite{baisilverstein2004}, adapting it from moments assumptions to the Gaussian assumption.

Under Assumption \ref{assump}, by Gaussianity, we have $\E |x_{11}|^4=3<\infty$. For $m=1,2,\ldots$, find $n_m$ ($n_m>n_{m-1}$) satisfying
\begin{align*}
    m^4\E \left[|x_{11}|^4\mathbbm{1}[|x_{11}|\geq \sqrt{n}/m]\right]<2^{-m}.
\end{align*}
for all $n\geq n_m$. Define $\delta_n'=1/m$ for all $n\in [n_m,n_{m+1})$ (=1 for $n<n_1$). Then, as $n\rightarrow 0$, $\delta_n'\rightarrow 0$ and
\begin{align*}
    \delta_n'^{-4}\E \left[|x_{11}|^4\mathbbm{1}[|x_{11}|\geq \delta_n'\sqrt{n}]\right]\rightarrow 0.
\end{align*}
Define $\delta_n=\max\{\delta_n',n^{-1/8}\}$, then the following conditions hold:
\begin{align}\label{eq:deltan.prop}
    \delta_n\rightarrow 0, \quad\delta_nn^{1/4}\rightarrow \infty,\quad\delta_n^{-4}\E \left[|x_{11}|^4\mathbbm{1}[|x_{11}|\geq \delta_n\sqrt{n}]\right]\rightarrow 0.
\end{align}
Let $\check{x}_{ji}=x_{ji}\mathbbm{1}_{|x_{ji}|\leq \delta_n\sqrt{n}}$, and $\check{B}_n=\frac{1}{N}\sum_{j=1}^NT_j^{1/2}\check{x}_j\check{x}_j^TT_j^{1/2}$, then
\begin{align*}
    \mathbbm{P}(B_n\neq \check{B}_n)&\leq nN\mathbbm{P}(|x_{11}|\geq \delta_n\sqrt{n})\\
    &= nN\E \left[\mathbbm{1}[|x_{11}|\geq \delta_n\sqrt{n}]\right]\\
    &\leq nN\E \left[\frac{|x_{11}|^4}{\delta_n^4n^2}\mathbbm{1}[|x_{11}|\geq \delta_n\sqrt{n}]\right]\\
    &\leq K\delta_n^{-4}\E \left[|x_{11}|^4\mathbbm{1}[|x_{11}|\geq \delta_n\sqrt{n}]\right]\rightarrow 0
\end{align*}
for some constant $K$. In other words, for any function $f$ satisfying the assumptions in Theorem \ref{thm:main}, we have
\begin{align}\label{eq:gn.check.gn}
   \int fd G_n(x)=\int fd\check{G}_n(x)+o_p(1),
\end{align}
where $\check{G}_n$ is analogue of $G_n$ with the matrix $B_n$ replaced by $\check{B}_n$.

Now, let $\hat{x}_{ji}=\check{x}_{ji}/\sigma_n$, where 
\begin{align*}
    \sigma_n^2&=\text{Var}(\check{x}_{ji})=\E |\check{x}_{ji}|^2\\
    &=1-\frac{2\delta_n\sqrt{n}\varphi(\delta_n\sqrt{n})}{\Phi(\delta_n\sqrt{n})-\Phi(-\delta_n\sqrt{n})}\rightarrow 1
\end{align*}
follows from Gaussianity. Above, $\varphi$ and $\Phi$ respectively represent the probability density function and cumulative density function of the standard normal distribution. Equation  (\ref{eq:deltan.prop}) yields
\begin{align*}
    \E |\hat{x}_{11}|^4=\frac{\E |\check{x}_{11}|^4}{\sigma_n^4}=\frac{\E |{x}_{11}|^4+o(\delta_n^4)}{\sigma_n^4}=3+o(1).
\end{align*}
Collecting the terms gives
\begin{align*}
    \hat{x}_{ji}<\delta_n\sqrt{n},\quad \E [\hat{x}_{ji}]=0,\quad \E |\hat{x}_{ji}|^2=1,\quad \E |\hat{x}_{ji}|^4=3+o(1).
\end{align*}
Let $\hat{B}_n=\frac{1}{N}\sum_{j=1}^NT_j^{1/2}\hat{x}_j\hat{x}_j^TT_j^{1/2}=\sigma_n^{-2}\check{B}_n$ and similarly define $\hat{G}_n$. Denote the $i$th smallest eigenvalue of a PSD matrix $A$ by $\lambda_i^A$, then 
\begin{align*}
     \left|\int fd\check{G}_n(x)-\int fd \hat{G}_n(x)\right|&\leq K\sum_{i=1}^n|\lambda_i^{\check{B}_n}-\lambda_i^{\hat{B_n}}|\\
     &\leq K(\sigma_n^{2}-1)\sum_{i=1}^n|\lambda_i^{\hat{B}_n}|\\
    &\leq Kn\frac{2\delta_n\sqrt{n}\varphi(\delta_n\sqrt{n})}{(\Phi(\delta_n\sqrt{n})-\Phi(-\delta_n\sqrt{n}))}\lambda_{\max}^{\hat{B}_n}.
\end{align*}
Above, $K$ is the upper bound of $f'$, and by the property (\ref{eq:concen.bn}), we have that $\lambda_{\max}^{\hat{B}_n}$ is almost surely bounded by a deterministic finite value. Finally, direct calculations show
\begin{align*}
    \delta_n\sqrt{n}\varphi(\delta_n\sqrt{n})=C\frac{1}{\delta_n^5n^{3/2}}\frac{(\delta_n^2n)^3}{e^{\delta_n^2n}}\rightarrow 0.
\end{align*}
As a result, together with equation (\ref{eq:gn.check.gn}), we arrive at the conclusion
\begin{align*}
     \int fd G_n(x)=\int fd\hat{G}_n(x)+o_p(1).
\end{align*}



\subsection{Proof of Lemma \ref{lem:concen}}
Before proving the lemma, we first introduce a few helpful results.
\begin{prop}[Rephrased from Lemma 2.7 \cite{baisil1998}]\label{prop.concen}
 Let $A$ be an $n\times n$ non-random (possibly complex) matrix. Suppose $x_1=(x_{11},...,x_{1n})^T$ is a (possibly complex) vector with i.i.d mean zero entries with $\E |x_{1i}|^2=1$, $\E |x_{1i}|^l\leq \nu_l$, for $i=1,\ldots,n$, $l\leq 2p$, $p\geq 1$. Then
 \begin{align}
     \E |x_1^*Ax_1-\text{tr}A|^p\leq K_p[\nu_4\text{tr}(AA^*)]^{p/2}+K_p\nu_{2p}\text{tr}\left[(AA^*)^{p/2}\right]\label{eq:lem2.7}
 \end{align}
 for some constant $K_p$ depending on $p$ only.
\end{prop}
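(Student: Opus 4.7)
The statement is essentially Lemma 2.7 of Bai--Silverstein (1998), so the plan is to reproduce its standard proof: decompose the centered quadratic form into diagonal and off-diagonal contributions and bound each by a classical moment inequality---Rosenthal for the independent-sum diagonal piece, Burkholder for the martingale-difference off-diagonal piece. Concretely, write
\[
Q := x_1^* A x_1 - \text{tr}(A) = S_d + S_o, \quad S_d = \sum_{i} a_{ii}(|x_{1i}|^2 - 1), \quad S_o = \sum_{i \neq j} \bar{x}_{1i}\, a_{ij}\, x_{1j}.
\]
Because $|Q|^p \leq 2^{p-1}(|S_d|^p + |S_o|^p)$, it suffices to estimate $\E|S_d|^p$ and $\E|S_o|^p$ separately.

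\textbf{Diagonal part.} The summands $\xi_i = a_{ii}(|x_{1i}|^2 - 1)$ are independent, mean zero, and satisfy $\E|\xi_i|^r \lesssim |a_{ii}|^r \nu_{2r}$. Rosenthal's inequality yields
\[
\E|S_d|^p \lesssim_p \nu_4^{p/2}\Bigl(\sum_i |a_{ii}|^2\Bigr)^{p/2} + \nu_{2p}\sum_i |a_{ii}|^p.
\]
The two coefficient sums are then reduced via $|a_{ii}|^2 \leq (AA^*)_{ii}$, which gives $\sum_i |a_{ii}|^2 \leq \text{tr}(AA^*)$, and via $\sum_i (AA^*)_{ii}^{p/2} \leq \text{tr}[(AA^*)^{p/2}]$ (a Schur-type inequality, valid for $p \geq 2$), which gives $\sum_i |a_{ii}|^p \leq \text{tr}[(AA^*)^{p/2}]$. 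This already matches the claimed form.

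\textbf{Off-diagonal part.} Introduce $\mathcal{F}_k = \sigma(x_{1,1},\ldots,x_{1,k})$ and $Z_k = (\E_k - \E_{k-1}) S_o = \bar{x}_{1k} u_k + x_{1k} v_k$, where $u_k = \sum_{j<k} a_{kj} x_{1j}$ and $v_k = \sum_{j<k} \bar{a}_{jk}\bar{x}_{1j}$. Then $\{Z_k\}$ is a martingale difference sequence summing to $S_o$, and Burkholder's inequality gives
\[
\E|S_o|^p \leq C_p\, \E\Bigl(\sum_k \E_{k-1}|Z_k|^2\Bigr)^{p/2} + C_p \sum_k \E|Z_k|^p.
\]
The conditional variance $\E_{k-1}|Z_k|^2$ is a quadratic form in $x_{1,<k}$ whose total expectation is $O(\text{tr}(AA^*))$; applying Rosenthal once more to the quadratic conditional-variance sum produces the $[\nu_4\,\text{tr}(AA^*)]^{p/2}$ contribution. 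For the jump term $\sum_k \E|Z_k|^p$, one applies Rosenthal a second time to the inner linear combinations $u_k, v_k$ and then sums over $k$; using $\sum_{k,j} |a_{kj}|^p \leq \text{tr}[(AA^*)^{p/2}]$ gives the $\nu_{2p}\,\text{tr}[(AA^*)^{p/2}]$ contribution.

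\textbf{Main obstacle.} The real work is the bookkeeping: the nested Rosenthal/Burkholder applications generate intermediate even moments $\nu_{2r}$ for $2 < r < p$, which must be absorbed into $\nu_4$ or $\nu_{2p}$ by log-convexity of moments, and the Schur-type trace inequalities $\sum_i |a_{ii}|^p \leq \text{tr}[(AA^*)^{p/2}]$ and $\sum_{k,j} |a_{kj}|^p \leq \text{tr}[(AA^*)^{p/2}]$ must be placed at precisely the right steps so that the right-hand side depends only on the two specific traces $[\text{tr}(AA^*)]^{p/2}$ and $\text{tr}[(AA^*)^{p/2}]$ rather than on coarser operator-norm surrogates. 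Since the proof is completely standard, in practice one simply invokes Bai--Silverstein's Lemma 2.7, as the label of the proposition already signals.
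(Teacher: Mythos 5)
The paper offers no proof of this proposition at all---it is stated as a rephrasing of Lemma 2.7 of \cite{baisil1998} and invoked by citation---and your sketch is precisely the standard argument from that reference (diagonal/off-diagonal split, Rosenthal for the independent diagonal sum, Burkholder for the off-diagonal martingale differences, Schur-type trace bounds), so your proposal is consistent with, and fills in, what the paper takes for granted. The only quibbles are cosmetic: $v_k$ should be $\sum_{j<k} a_{jk}\bar{x}_{1j}$ without the conjugate on $a_{jk}$, and the handling of $\E\bigl(\sum_k \E_{k-1}|Z_k|^2\bigr)^{p/2}$ in Bai--Silverstein is carried out by induction on $p$ (the conditional variance sum is again a centered quadratic form) rather than by a second literal application of Rosenthal, though the effect is the same.
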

\begin{lemma}\label{lem:basic}
    For any matrix $A\in\mathbbm{C}^{n\times N}$ with singular values $\sigma_1(A)\geq ...\geq \sigma_{n\wedge	N}(A)\geq 0$, the following hold true
    \begin{enumerate}
        \item the spectral norm satisfies
        \begin{align*}
           \lVert A\rVert =\sigma_1(A)=|\lambda_{\max}(A^*A)|^{1/2}
        \end{align*}
        \item for $r\geq 1$,
        \begin{align*}
            \text{tr}\left[(AA^*)^{r/2}\right]=\sum_{k=1}^{n\wedge	N}\lambda_k^{r/2}(AA^*)\leq (n\wedge	N)\lVert A\rVert^r\leq 2c^{-1}n\lVert A\rVert^r.
        \end{align*}
    \end{enumerate}
\end{lemma}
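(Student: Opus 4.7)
The plan is to derive both claims from the singular value decomposition of $A$, treating the two items as essentially packaging standard facts. Write $A = U \Sigma V^*$ with $U \in \mathbbm{C}^{n\times n}$, $V \in \mathbbm{C}^{N\times N}$ unitary and $\Sigma$ the $n \times N$ rectangular diagonal of singular values $\sigma_1(A) \geq \sigma_2(A) \geq \cdots \geq \sigma_{n\wedge N}(A) \geq 0$.

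For item (1), I would just combine the variational definition $\lVert A \rVert = \sup_{\lVert x \rVert =1} \lVert Ax \rVert$ with the identity $A^*A = V \Sigma^*\Sigma V^*$. The latter shows that $A^*A$ has eigenvalues $\sigma_k^2(A)$ for $k=1,\ldots, n\wedge N$ (plus zeros), so $\lambda_{\max}(A^*A) = \sigma_1^2(A)$, and a direct computation $\lVert Ax\rVert^2 = x^* A^*A x$ maximized over the unit sphere gives $\lVert A \rVert^2 = \lambda_{\max}(A^*A)$.

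For item (2), I would apply the functional calculus: since $AA^* = U\Sigma\Sigma^* U^*$ is Hermitian positive semidefinite with eigenvalues $\sigma_k^2(A)$ (for $k \leq n \wedge N$, with additional zero eigenvalues if $n > N$), the matrix $(AA^*)^{r/2}$ has eigenvalues $\sigma_k^r(A) = \lambda_k^{r/2}(AA^*)$. Summing gives the first equality. Each $\sigma_k^r(A) \leq \sigma_1^r(A) = \lVert A\rVert^r$ by item (1), and only $n \wedge N$ summands are nonzero, which yields $\text{tr}[(AA^*)^{r/2}] \leq (n \wedge N)\lVert A \rVert^r$. The last inequality $(n \wedge N) \leq 2c^{-1}n$ is immediate from the aspect-ratio condition $c < n/N$ in Assumption \ref{assump_trunc} (item 2), which gives $N < c^{-1}n$, hence $n \wedge N \leq N < c^{-1}n \leq 2c^{-1}n$.

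There is no real obstacle here; the statement is a bookkeeping lemma collecting two elementary SVD consequences in a form convenient for bounding the right-hand side of Proposition \ref{prop.concen} when it is used inside the proof of Lemma \ref{lem:concen}. The only care point is keeping the rectangular case ($n \ne N$) straight so that one consistently counts the $n \wedge N$ nonzero singular values rather than writing $n$ or $N$ indiscriminately.
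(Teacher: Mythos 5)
Your proof is correct. The paper states this lemma without proof (it serves only as a bookkeeping step feeding into the bound in the proof of Lemma \ref{lem:concen}), and your SVD-based verification — including the careful count of the $n\wedge N$ nonzero singular values and the use of $c<n/N$ from Assumption \ref{assump} to get $n\wedge N\leq N<c^{-1}n\leq 2c^{-1}n$ — is exactly the standard argument one would supply.
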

\begin{proof}[Proof of Lemma \ref{lem:concen}]
    Recall from assumption that $x_{1i}$ are i.i.d., with $|x_{1i}|<\delta_n\sqrt{n}$, $\E [x_{1i}]=0$, $\E |x_{1i}|^2=1$, $\E |x_{1i}|^4=3+o(1)$, where $\delta_n\rightarrow 0$, $n\delta_n^4\rightarrow\infty$. From this, we have
    \begin{align*}
        \nu_{2p}=\E |x_{11}|^{2p}\leq \delta_n^{2p-4}{n}^{p-2}\nu_4.
    \end{align*}
    Taken together with Lemma \ref{lem:basic}, Proposition \ref{prop.concen} implies that for each $B_l$,
    \begin{align}
        \E |x_1^TB_lx_1-\text{tr}B_l|^p
        &\leq K_p\lVert B_l\rVert^p\{(2c^{-1}\nu_4n)^{p/2}+2c^{-1}\nu_4\delta_n^{2p-4}{n}^{p-1}\}\\
        &\leq K_p\lVert B_l\rVert^p\delta_n^{2p-4}{n}^{p-1}.\label{eq:interm}
    \end{align}
   Set $W_l=N^{-1}(x_1^TB_lx_1-\text{tr}B
    _l)$,  $\lVert W\rVert_q:=(\E |W|^q)^{1/q}$,  then from equation (\ref{eq:interm}) we have
    \begin{align*}
        \lVert W\rVert_q\leq  K_p\lVert B_l\rVert^p\delta_n^{2p-4}{n}^{p-1}.
    \end{align*}
    For an integer $q$, Hölder's inequality implies that 
    \begin{align*}
        \left\lVert \prod_{l=1 }^qW_l\right\rVert_1\leq  \prod_{l=1 }^q\lVert W_l\rVert_q.
    \end{align*}
    Collecting the terms yields    \begin{align*}
       \left| \prod_{l=1 }^qW_l\right|
       \leq \left\lVert \prod_{l=1 }^qW_l\right\rVert_1\leq  K_q(2c)^{q-1}N^{-1}\delta_n^{2q-4}\prod_{l=1}^q\lVert B_l\rVert
    \end{align*}
    as required.
\end{proof}

\section{Convergence of $M_n^1$}
\subsection{Proof of Lemma \ref{lem:Mn1.prelim}}
\begin{enumerate}
    \item \textbf{(i) boundedness of $|\beta_j|$, $|\Tilde{\beta}_j|$, $|b_j|$. }First, we introduce a useful definition and result.
    \begin{definition}\label{def:nevanlinna}
If $f: \mathbbm{C}^+\rightarrow\mathbbm{C}^+$ maps the upper half plane to itself, then $f$ is called a Nevanlinna function.
\end{definition}
\begin{lemma}\label{lem:nevanlinna}
    The following hold true for Nevanlinna functions:
    \begin{enumerate}
        \item for a function $g$, if the function $z\rightarrow zg(z)$ is Nevanlinna, then for $z\in \mathbbm{C}^+$,
        \begin{align*}
            \left|\frac{1}{1+g(z)}\right|\leq \frac{|z|}{\text{Im}z}
        \end{align*}
        \item if matrices $B$, $T$ are positive definite, and $x\in\mathbbm{R}^n$ then for each of the function $g$:
        \begin{align*}
            \frac{1}{N}x^TT^{1/2}(B-z)^{-1}T^{1/2}x,\quad \frac{1}{N}\text{Tr}(B-z)^{-1}T, \quad \frac{1}{N}\E \text{Tr}(B-z)^{-1}T,
        \end{align*}
        $z\rightarrow zg(z)$ is Nevanlinna.
    \end{enumerate}
\end{lemma}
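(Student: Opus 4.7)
The plan is to dispatch the two parts of Lemma \ref{lem:nevanlinna} by direct computation, with part (ii) reduced to a single spectral calculation that applies uniformly to all three functions.

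For part (i), I would introduce $h(z)=zg(z)$, which by hypothesis maps $\mathbb{C}^+$ into the closed upper half plane, and rewrite
\[
\frac{1}{1+g(z)}=\frac{z}{z+zg(z)}=\frac{z}{z+h(z)}.
\]
Since $\operatorname{Im}(z)>0$ and $\operatorname{Im}(h(z))\ge 0$, we have $|z+h(z)|\ge \operatorname{Im}(z+h(z))\ge \operatorname{Im}(z)$, and the bound $|1/(1+g(z))|\le |z|/\operatorname{Im}(z)$ follows immediately. This step is routine.

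For part (ii), the plan is to reduce to the pointwise identity
\[
\frac{z}{\lambda-z}=-1+\frac{\lambda}{\lambda-z},\qquad \operatorname{Im}\!\left(\frac{\lambda}{\lambda-z}\right)=\frac{\lambda\,\operatorname{Im}(z)}{|\lambda-z|^{2}}\ge 0\quad\text{for }\lambda\ge 0,\ \operatorname{Im}(z)>0.
\]
Using the spectral decomposition $B=U\Lambda U^{T}$ with $\Lambda=\operatorname{diag}(\lambda_i)$, $\lambda_i\ge 0$, each of the three functions of interest admits a representation
\[
g(z)=\frac{1}{N}\sum_{i}\frac{c_i}{\lambda_i-z},\qquad c_i\ge 0.
\]
For the quadratic form, take $\tilde y=U^{T}T^{1/2}x$ and $c_i=|\tilde y_i|^{2}$. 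For $N^{-1}\operatorname{Tr}\bigl((B-z)^{-1}T\bigr)$, cyclicity gives $c_i=(U^{T}TU)_{ii}\ge 0$ since $T\succeq 0$. The expectation version follows by linearity. Multiplying by $z$ and applying the identity above yields
\[
zg(z)=-\frac{1}{N}\sum_i c_i+\frac{1}{N}\sum_{i}\frac{\lambda_i c_i}{\lambda_i-z},
\]
whose imaginary part is $\operatorname{Im}(z)\,N^{-1}\sum_i \lambda_i c_i/|\lambda_i-z|^{2}\ge 0$, so $z\mapsto zg(z)$ is Nevanlinna.

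There is no substantive obstacle; the only mild care needed is the verification that the same reduction works for all three functions, which is handled by the observation that $T\succeq 0$ implies $(U^{T}TU)_{ii}\ge 0$ and that conjugation by $U$ diagonalizes $(B-z)^{-1}$ while preserving positivity of the accompanying weights. The boundary case $\operatorname{Im}(zg(z))=0$ (e.g.\ when $y=T^{1/2}x=0$) corresponds to $g\equiv 0$, in which case the conclusion of part (i) is trivial and the Nevanlinna property holds in its weak sense (upper half plane including the real axis), which is all that is needed downstream.
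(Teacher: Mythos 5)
Your proposal is correct and follows essentially the same route as the paper's proof: part (i) via the bound $|z+zg(z)|\geq \operatorname{Im}(z+zg(z))\geq \operatorname{Im}z$, and part (ii) via the spectral decomposition of $B$ together with the pointwise computation $\operatorname{Im}\bigl(z/(\lambda-z)\bigr)=\lambda\operatorname{Im}(z)/|\lambda-z|^2\geq 0$ applied to nonnegative weights. Your extra remark on the degenerate case $zg(z)$ real is a harmless refinement the paper glosses over.
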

\begin{proof}[Proof of Lemma \ref{lem:nevanlinna}]
   (a)  Note that
        \begin{align*}
            \left|\frac{1}{1+g(z)}\right|=\frac{|z|}{|z+zg(z)|},
        \end{align*}
        where 
        \begin{align*}
            \text{Im}(z+zg(z))>\text{Im}z.
        \end{align*}
        Hence 
        \begin{align*}
            |z+zg(z)|^{-1}\leq (\text{Im}z)^{-1}.
        \end{align*}

        (b) From the spectral decomposition $B=U\Lambda U^T$, we can transform the functions $g$ into
        \begin{gather}\label{eq:transform.g}
\frac{1}{N}\text{Tr}\left[\text{diag}\left(\frac{1}{\lambda_j-z}\right)U^TT^{1/2}xx^TT^{1/2}U\right],\\
    \frac{1}{N}\text{Tr}\left[\text{diag}\left(\frac{1}{\lambda_j-z}\right)U^TTU\right], \quad\frac{1}{N}\E \text{Tr}\left[\text{diag}\left(\frac{1}{\lambda_j-z}\right)U^TTU\right].
\end{gather}
For any $\lambda>0$ and $z\in\mathbbm{C}^+$, we have
\begin{align*}
    \text{Im}\frac{z}{\lambda-z}=\frac{\text{Im}z(\lambda-\Bar{z})}{|\lambda_j-z|^2}=\frac{\lambda\text{Im}z}{|\lambda_j-z|^2}>0.
\end{align*}
Therefore, the functions $z\rightarrow zg(z)$ are Nevanlinna.
\end{proof}

As a direct corollary to Lemma \ref{lem:nevanlinna}, we have that
 \begin{align}\label{eq:bdd.app}
        |\beta_j|,\quad |\Tilde{\beta}_j|,\quad |b_j|,\quad |\psi_j|\leq |z|v^{-1}.
    \end{align}

\textbf{(ii) boundedness of $|b_j(z)-\psi_j(z)|$. }
Introduce 
\begin{align}\label{eq:def.rhoj}
    \rho_j(z)=r_j^TD_j^{-1}(z)r_j-N^{-1}\E [\text{Tr}(D_j^{-1}(z)T_j)].
\end{align}
Note that 
\begin{align*}
   \beta_j(z)-b_j(z)&=\frac{N^{-1}\E [\text{Tr}(D_j^{-1}(z)T_j)]-r_j^TD_j^{-1}(z)r_j}{(1+r_j^TD_j^{-1}(z)r_j)(1+N^{-1}\E [\text{Tr}(D_j^{-1}(z)T_j)])}\\
   &=-\rho_j(z)\beta_j(z)b_j(z).
\end{align*}
Therefore,
\begin{align}
   \beta_j(z)&=b_j(z)-\rho_j(z)\beta_j(z)b_j(z).\label{eq:beta.j.b.j}
\end{align}
Combining the property (\ref{eq:D.Dj})
with (\ref{eq:beta.j.b.j}) we have
\begin{align}
    \E [\text{Tr}(D_j^{-1}(z)T_j)-\text{Tr}(D^{-1}(z)T_j)]
    &=\E [\beta_j(z)\text{Tr}(r_j^TD_j^{-1}(z)T_jD_j^{-1}(z)r_j)]\nonumber\\
    &=b_j(z)\E [\text{Tr}(r_j^TD_j^{-1}(z)T_jD_j^{-1}(z)r_j)]\nonumber\\
    &\quad -b_j(z)\E [\beta_j(z)\rho_j(z)r_j^TD_j^{-1}(z)T_jD_j^{-1}(z)r_j].\label{eq:djtj.dtj}
\end{align}
Apply Lemma \ref{lem:concen}, then
\begin{align}
    &|\E [\beta_j(z)\rho_j(z)r_j^TD_j^{-1}(z)T_jD_j^{-1}(z)r_j]|\nonumber\\
    &\leq  |\E [\beta_j(z)\rho_j(z)\left(r_j^TD_j^{-1}(z)T_jD_j^{-1}(z)r_j-N^{-1}\E \text{Tr}(D_j^{-1}(z)T_jD_j^{-1}T_j)\right)]|\nonumber\\
    &+|\E [\beta_j(z)\rho_j(z)N^{-1}\E \text{Tr}(D_j^{-1}(z)T_jD_j^{-1}T_j)]|\leq C|z|v^{-4}N^{-1/2}.\label{eq:bd.betaj.rhoj}
\end{align}
As a result, by boundedness of $b_j$, $\psi_j$ derived in (\ref{eq:bdd.app}),
\begin{align}
    |b_j(z)-\psi_j(z)|&=\left|\frac{1}{N}b_j(z)\psi_j(z)\E [\text{Tr}(D_j^{-1}(z)T_j)-\text{Tr}(D^{-1}(z)T_j)]\right|\\
    &\leq|z|^3v^{-3}\left|\frac{1}{N^2}\E [\text{Tr}(D_j^{-1}(z)T_jD_j^{-1}(z)T_j)]\right|+C|z|^4v^{-7}N^{-3/2}\\
    &\leq C|z|^4v^{-7}N^{-1}.
\end{align}
\textbf{(iii) boundedness of $\lVert R_j\rVert$, $\lVert R\rVert$. }
Recall $R_j(z)=zI-\frac{1}{N}\sum_{i\neq j}\psi_i(z)T_i$, $R=\frac{1}{N}\sum_{j=1}^N\psi_j(z)T_j-zI$, $\psi_j(z)=\frac{1}{1+N^{-1}\E [\text{Tr}(D^{-1}(z)T_j)]}$.
Assume $B_n=U\Lambda U^T$, $z=\eta+vi$, then direct calculations show that
\begin{align*}
    \E [\text{Tr}(D^{-1}({z})T_j)]&= \E [\text{Tr}(U(\Lambda-{z}I)^{-1}U^TT_j)]\\
     &=\E [\text{Tr}(U(\Lambda-{\eta}I-viI)^{-1}U^TT_j)]\\
     &= \E [\text{Tr}(U\text{diag}(\frac{\lambda_i-\eta+vi}{(\lambda_i-\eta)^2+v^2})U^TT_j)]\\
     &= \E [\text{Tr}(U\text{diag}(\frac{\lambda_i-\eta}{(\lambda_i-\eta)^2+v^2})U^TT_j)]\\
     &+iv\E [\text{Tr}(U\text{diag}(\frac{1}{(\lambda_i-\eta)^2+v^2})U^TT_j)].
\end{align*}
Consequently, we have
\begin{align}
    R=&\frac{1}{N}\sum_{j=1}^N|\psi_j|^2({1+\E [\text{Tr}(U\text{diag}(\frac{\lambda_i-\eta}{(\lambda_i-\eta)^2+v^2})U^TT_j)]})T_j-\eta I\nonumber\\
    &-iv\left(I+\frac{1}{N}\sum_{j=1}^N|\psi_j|^2({1+\E [\text{Tr}(U\text{diag}(\frac{1}{(\lambda_i-\eta)^2+v^2})U^TT_j)]})T_j\right)\label{eq:R.decomp}\\
    :=&A+Bi,\nonumber
\end{align}
where $A$ and $B$ are symmetric, and $B$ is negative definite. 
From the fact that $R^*R=A^2+B^2$, we get
\begin{align*}
    \lambda_{\max}(R^*R)\geq \lambda_{\max}(A^2,B^2)\geq \lambda_{\max}(B^2)=\lambda_{\max}(B)^2\geq v^2.
\end{align*}
Finally, we have $\lVert R^{-1}\rVert\leq v^{-1}$. Similarly, $\lVert R_j^{-1}\rVert\leq v^{-1}$, $j=1,\ldots,N$.



\item From the definition of $g_2^{(r)}$ in (\ref{eq:random.stiel}), we write
\begin{align*}
    N^{-1}\E [\text{Tr}(D^{-1}(z)T_j)]=\sum_{r=1}^kl_{rj}^2g_2^{(r)}.
\end{align*}
Applying Lemma \ref{lem:det.equiv}, it is straightforward to verify that
\begin{align*}
    \left|\psi_j(z)-\deteqv{b}_j(z)\right|=o(1).
\end{align*}
Taken together with the boundedness result (\ref{eq:bd.beta}), we get
\begin{align*}
    \left|b_j(z)-\deteqv{b}_j(z)\right|=o(1).
\end{align*}
\item Without loss of generality, let $j=1$. First, note that
\begin{align*}
   \Tilde{\beta}_1(z)-b_1(z) 
   &=\Tilde{\beta}_1(z)b_1(z)N^{-1}\left(\E [\text{Tr}(D_1^{-1}(z)T_1)]-\text{Tr}(D_1^{-1}(z)T_1)\right).
\end{align*}
Introduce the notations $D_{1i}=D-r_1r_1^T-r_ir_i^T$, $K_i=\text{Tr}T_1(D_1^{-1}-D_{1i}^{-1})$ for $i>1$, then
\begin{align*}
    \text{Tr}(D_1^{-1}(z)T_1)-\E [\text{Tr}(D_1^{-1}(z)T_1)]
    &=\sum_{i=2}^N(\E_i-\E_{i-1})\text{Tr}(D_1^{-1}(z)T_1)\\
    &=\sum_{i=2}^N(\E_i-\E_{i-1})\text{Tr}T_1(D_1^{-1}(z)-D_{1i}^{-1}(z))\\
    &=\sum_{i=2}^N(\E_i-\E_{i-1})K_i.
\end{align*}
By Lemma \ref{lemma:bound.trA}, we have that
\begin{align*}
    |K_i|\leq Cv^{-1}.
\end{align*}
Since $\{(\E_i-\E_{i-1})K_i\}$ are uncorrelated, we further get
\begin{align*}
    \E \left|\text{Tr}(D_1^{-1}(z)T_1)-\E [\text{Tr}(D_1^{-1}(z)T_1)]\right|^2
    &=\E \left|\sum_{i=2}^N(\E_i-\E_{i-1})K_i\right|^2\\
    &=\sum_{i=2}^N\E \left|(\E_i-\E_{i-1})K_i\right|^2\\
    &\leq 4\sum_{i=2}^N\E \left|K_i\right|^2\leq CNv^{-2}.
\end{align*}
Collecting the terms, together with the boundedness result (\ref{eq:bd.beta}), we obtain
\begin{align*}
    \E |\Tilde{\beta}_j(z)-b_j(z)|^2
 &\leq C|\Tilde{\beta}_j(z)|^2|b_j(z)|^2N^{-2}\E \left|\text{Tr}(D_1^{-1}(z)T_1)-\E [\text{Tr}(D_1^{-1}(z)T_1)]\right|^2\\
 &\leq C|z|^4v^{-6}N^{-1}.
\end{align*}
\item Let us write $\epsilon_j(z)$ as
\begin{align*}
    \epsilon_j(z)=N^{-1}x_j^TT_j^{1/2}D_j^{-1}(z)T_j^{1/2}x_j-N^{-1}\text{Tr}(T_j^{1/2}D_j^{-1}(z)T_j^{1/2}).
\end{align*}
Define $C_j:=T_j^{1/2}D_j^{-1}(z)T_j^{1/2}$, then
\begin{align*}
    \epsilon_j(z)=N^{-1}x_j^TC_jx_j-N^{-1}\text{Tr}(C_j).
\end{align*}
For $q\geq 1$, further take $B_{2k-1}=C_j$, $B_{2k}=C_j^*$, $k=1,...,q$, then the concentration result (\ref{eq:martingale.concentr}) implies
\begin{align*}
  \E |\epsilon_j(z)|^{2q}
   &= \E \prod_{l=1}^{2q}\left(N^{-1}x_j^TB_lx_j-N^{-1}\text{Tr}(B_l)\right)\\
    &\leq  \E \left|\E \left[\prod_{l=1}^{2q}\left(N^{-1}x_j^TB_lx_j-N^{-1}\text{Tr}(B_l)\right)\bigg| x_i,i\neq j\right]\right|\\
   &\leq KN^{-1}\delta_n^{4q-4}\prod_{l=1}^{2q}\lVert B_l\rVert\\
   &\leq KN^{-1}\delta_n^{4q-4}v^{-2q}.
\end{align*}
In a similar vein,
\begin{align*}
    \E |\gamma_j(z)|^{2q}\leq KN^{-1}\delta_n^{4q-4}v^{-4q}.
\end{align*}
\end{enumerate}
\subsection{Additional justifications for equation (\ref{eq:beta.eps})}
\begin{lemma}\label{app:lem.der}
    Denote $f_n(z_1,z_2)=\sum_{j=1}^N\E_{j-1}[\E_j(\Tilde{\beta}_j(z_1)\epsilon_j(z_1))\E_j(\Tilde{\beta}_j(z_2)\epsilon_j(z_2))]$. If there exists $\sigma_n'$ such that
    \begin{align*}
       \frac{f_n(z_1,z_2)}{ \sigma_n'^{2}(z_1,z_2)}\convp 1,
    \end{align*}
    then we also have
    \begin{align*}
       \frac{\Phi_n(z_1,z_2)}{ \sigma_n^{2}(z_1,z_2)}\convp 1,
    \end{align*}
    where $ \sigma_n^{2}(z_1,z_2)=\frac{\partial^2}{\partial z_2\partial z_1}\sigma_n'^{2}(z_1,z_2)$.
\end{lemma}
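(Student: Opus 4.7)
The plan is to promote the defining identity $h_j(z) = -\partial_z \E_j[\Tilde{\beta}_j(z)\epsilon_j(z)]$ to the statement $\Phi_n(z_1,z_2) = \partial^2 f_n(z_1,z_2)/(\partial z_1 \partial z_2)$, and then transfer asymptotic equivalence from $f_n$ to its mixed partial via Cauchy's integral formula. The identity $\Phi_n = \partial^2 f_n/(\partial z_1 \partial z_2)$ is morally just an interchange of differentiation, summation, and conditional expectation; once it is justified, the lemma reduces to a statement about analytic functions.

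To justify the interchange, I would first observe that for each fixed realization $\omega$ the map $z \mapsto \Tilde{\beta}_j(z)\epsilon_j(z)$ is analytic on $\mathbbm{C}_0 = \{z : |\text{Im}\,z| > v_0\}$, since $D_j^{-1}(z)$ is meromorphic with poles confined to the real axis (spectral decomposition of $B_{nj}$). From Lemma \ref{lem:Mn1.prelim}, on any compact set $K \subset \mathbbm{C}_0$ the quantities $\Tilde{\beta}_j(z)$, $\epsilon_j(z)$, and their first $z$-derivatives $\Tilde{\beta}_j'(z)$, $\gamma_j(z)$ admit deterministic polynomial bounds in $v^{-1}$ with uniformly controlled moments. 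Applying dominated convergence to the difference quotients then legitimizes pulling $\partial_{z_i}$ through each $\E_j$ and $\E_{j-1}$. Since the outer sum is a finite sum over $j \leq N$, differentiation and summation commute trivially, yielding
\[
\Phi_n(z_1,z_2) \;=\; \frac{\partial^2}{\partial z_1 \partial z_2} f_n(z_1,z_2)
\]
as an identity in $(z_1,z_2) \in \mathbbm{C}_0 \times \mathbbm{C}_0$.

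For the probabilistic conclusion, I would invoke Cauchy's integral formula. For sufficiently small closed contours $\Gamma_i \subset \mathbbm{C}_0$ encircling $z_i$,
\[
\frac{\partial^2 f_n(z_1,z_2)}{\partial z_1 \partial z_2} \;=\; \frac{1}{(2\pi i)^2}\oint_{\Gamma_1}\oint_{\Gamma_2} \frac{f_n(w_1,w_2)}{(w_1-z_1)^2(w_2-z_2)^2}\,dw_1\,dw_2,
\]
and analogously for $\sigma_n'^2$. Subtracting and dividing by $\sigma_n^2(z_1,z_2)$, the desired convergence $\Phi_n/\sigma_n^2 \convp 1$ reduces to showing that $(f_n - \sigma_n'^2)/\sigma_n'^2 \convp 0$ uniformly over the compact set $\Gamma_1 \times \Gamma_2$. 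Since the hypothesis gives pointwise convergence in probability, the upgrade to uniform convergence on compacts follows from the Vitali--Porter theorem applied pathwise along subsequences: the family $\{f_n\}$ consists of analytic functions on $\mathbbm{C}_0 \times \mathbbm{C}_0$, and the boundedness estimates of Lemma \ref{lem:Mn1.prelim} combined with the variance-type bounds already invoked in Section \ref{subsec:cov.replace} yield local uniform boundedness in probability.

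The principal obstacle is precisely this last step, namely the passage from pointwise convergence in probability of $f_n/\sigma_n'^2$ to uniform convergence on a fixed contour. This requires a tightness bound for the analytic family $\{f_n\}$ on a neighborhood of $\Gamma_1 \times \Gamma_2$; in practice it is obtained by Cauchy--Schwarz together with the $\E|\epsilon_j(z)|^{2q}$ estimates of Lemma \ref{lem:Mn1.prelim}, which yield $\sup_n \E|f_n(w_1,w_2)|^2 < \infty$ locally uniformly in $(w_1,w_2)$. Combined with Markov's inequality and a diagonal subsequence argument, this delivers the uniform convergence in probability on compacts needed to close the Cauchy-integral estimate, completing the proof.
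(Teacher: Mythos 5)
Your proposal is correct and follows essentially the same route as the paper: the paper also reduces the lemma to a Vitali-type convergence theorem for analytic functions (Lemma 2.3 of Bai--Silverstein, whose proof is exactly your Cauchy-integral estimate), combined with the deterministic uniform bound on $f_n$ from Lemma \ref{lem:Mn1.prelim} and a diagonal-subsequence argument to upgrade pointwise convergence in probability to locally uniform convergence, applied separately on the two components of $\mathbbm{C}_0$. The only caveat is that in your last step you should lean on the deterministic (almost sure) local bound for $f_n$ rather than on $\sup_n\E|f_n|^2<\infty$ plus Markov, since the pathwise Vitali--Porter argument along an a.s.-convergent sub-subsequence needs almost sure local uniform boundedness, which is exactly what the paper extracts from Lemma \ref{lem:Mn1.prelim}.
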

Before we prove this result, we first introduce the following useful lemma. 
\begin{lemma}[Lemma 2.3 in \cite{baisilverstein2004}]\label{app:lem.bai.der}
    Let $f_1,f_2,\ldots$ be analytic in $D$, a connected open set of $\mathbbm{C}$, satisfying $|f_n(z)|\leq M$ for every $n$ and $z$ in $D$, and $f_n(z)$ converges, as $n\rightarrow \infty$ for each $z$ in a subset of $D$ having a limit point in $D$. Then there exists a function $f$, analytic in $D$ for which $f_n(z)\rightarrow f(z)$ and $f_n'(z)\rightarrow f'(z)$ for all $z\in D$. Moreover, on any set bounded by a contour interior to $D$ the convergence is uniform and $\{f_n'(z)\}$ is uniformly bounded by $2M/\epsilon$, where $\epsilon$ is the distance between the contour and the boundary of $D$.
\end{lemma}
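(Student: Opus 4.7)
The plan is to recognize this as the classical Vitali convergence theorem together with its companion statement for derivatives, and to prove it in two stages. First I would establish locally uniform convergence $f_n \to f$ on $D$ by invoking the Montel--Vitali circle of ideas; then I would transfer that convergence to the derivatives together with the explicit bound $2M/\epsilon$ via Cauchy's integral formula.

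For the first stage, the hypothesis $|f_n(z)|\leq M$ on $D$ makes $\{f_n\}$ locally uniformly bounded, so Montel's theorem shows that $\{f_n\}$ is a normal family: every subsequence admits a further subsequence converging locally uniformly on $D$ to some analytic function. Given two such locally uniform subsequential limits $g$ and $h$, we have $g(z)=h(z)=\lim_n f_n(z)$ on the set $S\subset D$ of hypothesized pointwise convergence. Because $S$ has a limit point in the connected open set $D$, the identity theorem forces $g\equiv h$ on $D$. A standard subsequence-of-subsequence argument then upgrades pointwise convergence on $S$ to locally uniform convergence of the full sequence to a single analytic limit $f$ on $D$.

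For the second stage, fix a contour $\gamma$ interior to $D$ bounding the closed region $K\subset D$, and set $\epsilon=\mathrm{dist}(\gamma,\partial D)>0$. For any $z\in\gamma$, the closed disk $\overline{B(z,\epsilon/2)}$ lies in $D$, and Cauchy's integral formula for the derivative yields
\[
f_n'(z)=\frac{1}{2\pi i}\oint_{|\zeta-z|=\epsilon/2}\frac{f_n(\zeta)}{(\zeta-z)^2}\,d\zeta,
\]
so that $|f_n'(z)|\leq \frac{1}{2\pi}\cdot 2\pi(\epsilon/2)\cdot M/(\epsilon/2)^2=2M/\epsilon$. Since $f_n'$ is analytic on $D$ and hence on $K$, the maximum modulus principle promotes this bound from $\gamma$ to all of $K$. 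Replacing $f_n$ by $f_n-f$ in the same Cauchy representation and combining with the first stage (which gives uniform smallness of $|f_n-f|$ on the compact tubular neighborhood $\{\zeta:\mathrm{dist}(\zeta,\gamma)\leq\epsilon/2\}\subset D$) produces $f_n'\to f'$ uniformly on $\gamma$, and hence uniformly on $K$ by a final application of the maximum modulus principle to $f_n'-f'$.

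The substantive content of the argument is entirely packaged in Montel's theorem, the identity theorem, and Cauchy's integral formula, and the only point deserving care is the geometric setup: by the very definition of $\epsilon$, every disk $B(z,\epsilon/2)$ with $z\in\gamma$ is safely contained in $D$, so Cauchy's formula applies on these circles. I do not anticipate a genuine obstacle beyond this book-keeping; the result is a standard package from one-variable complex analysis and is being invoked here simply to justify differentiating the limit of $\sigma_n'^2(z_1,z_2)$ term by term.
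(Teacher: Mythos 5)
Your proof is correct: the Montel--identity-theorem (Vitali) argument for locally uniform convergence, followed by the Cauchy integral estimate on circles of radius $\epsilon/2$ giving exactly the bound $2M/\epsilon$ and the uniform convergence of $f_n'$, is the standard proof of this result. The paper itself does not prove this lemma but imports it verbatim as Lemma 2.3 of \cite{baisilverstein2004}, where it is likewise justified by Vitali's convergence theorem, so your argument matches the intended one.
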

\begin{proof}[Proof of Lemma \ref{app:lem.der}]
    By Lemma \ref{lem:Mn1.prelim}, we have that for every $n$, $|f_n(z_1,z_2)|$ has an upper bound that only depends on $z_1$, $z_2$ and $v_0$. 

    We shall adapt the proof strategy on page 571 in \cite{baisilverstein2004}. The main idea is to apply Lemma \ref{app:lem.bai.der} separately to the cases where $\text{Im}z > v_0$ and $\text{Im}z < -v_0$. Suppose $\sigma_n'^{-2}(z_k,z_l)f_n(z_k,z_l)\convp 1$ for each $z_k$, $z_l\in \{z_i\}\subset \underline{D}=\{z:v_0<|\text{Im}z|<K\}$ for an arbitrary $K>v_0$ and that the sequence $\{z_i\}$ has two limit points, one with $\text{Im}z > v_0$ and the other with $\text{Im}z < -v_0$. By a diagonalization argument, we can find a subsequence of $n\in \mathbbm{N}$ such that $\sigma_n'^{-2}(z_k,z_l)f_n(z_k,z_l)$ converges simultaneously for each pair $z_k$, $z_l$. Now, for each $z_l\in\{z_i\}$, we apply Lemma \ref{app:lem.bai.der} to conclude that on each of $\{z:v_0<\text{Im}z<K\}$ and $\{z:-K<\text{Im}z<-v_0\}$, we have $\sigma_n'^{-2}(z,z_l)f_n(z,z_l)\convp 1$ and  $\frac{\partial}{\partial z}\sigma_n'^{-2}(z,z_l)f_n(z,z_l)\convp 0$. In other words,
    \begin{align*}
        \frac{\frac{\partial}{\partial z}f_n(z,z_l)}{\frac{\partial}{\partial z}\sigma_n'^{2}(z,z_l)}\rightarrow 1. 
    \end{align*}
    Here, the convergence is uniform.
    We conclude the proof by applying Lemma \ref{app:lem.bai.der} once more on the remaining variable. 
\end{proof}
\subsection{Additional justifications for equation (\ref{eq:b.eps})}
\begin{lemma}\label{lem:tilde.beta.b}
    Under Assumption \ref{assump_trunc},  we have
    \begin{align*}
        \sum_{j=1}^N\E_{j-1}[\E_j(\Tilde{\beta}_j(z_1)\epsilon_j(z_1))\E_j(\Tilde{\beta}_j(z_2)\epsilon_j(z_2))]-\sum_{j=1}^N\E_{j-1}[\E_j(b_j(z_1)\epsilon_j(z_1))\E_j(b_j(z_2)\epsilon_j(z_2))]\convp 0.
    \end{align*}
\end{lemma}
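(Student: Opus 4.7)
The plan is to reduce the difference to a sum of cross terms and bound each one in $L^1$ via Cauchy-Schwartz combined with the moment estimates provided in Lemma \ref{lem:Mn1.prelim}. Writing $\tilde{E}_j(z)=\E_j[\Tilde{\beta}_j(z)\epsilon_j(z)]$ and, using that $b_j(z)$ is deterministic, $E_j(z)=b_j(z)\E_j\epsilon_j(z)$, I apply the telescoping identity
\begin{align*}
\tilde{E}_j(z_1)\tilde{E}_j(z_2)-E_j(z_1)E_j(z_2)=\bigl(\tilde{E}_j(z_1)-E_j(z_1)\bigr)\tilde{E}_j(z_2)+E_j(z_1)\bigl(\tilde{E}_j(z_2)-E_j(z_2)\bigr).
\end{align*}
Taking $\E_{j-1}$ and then absolute value, the task reduces to showing that each of the two resulting cross terms has expectation summing to $o(1)$ over $j=1,\ldots,N$.

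For each cross term I would apply Cauchy-Schwartz in the outer expectation, reducing the problem to estimating the second moments of $\tilde{E}_j(z)-E_j(z)$, $\tilde{E}_j(z)$, and $E_j(z)$. Conditional Jensen gives
\begin{align*}
\E\bigl|\tilde{E}_j(z)-E_j(z)\bigr|^2\le \E\bigl|(\Tilde{\beta}_j(z)-b_j(z))\epsilon_j(z)\bigr|^2\le \sqrt{\E|\Tilde{\beta}_j(z)-b_j(z)|^4\cdot\E|\epsilon_j(z)|^4}.
\end{align*}
The fourth moment of $\Tilde{\beta}_j-b_j$ is controlled by combining the a.s. bound $|\Tilde{\beta}_j-b_j|\le 2|z|v^{-1}$ from (\ref{eq:bd.beta}) with the $L^2$-bound (\ref{eq:beta.tilde.b}), yielding $\E|\Tilde{\beta}_j-b_j|^4=O(N^{-1})$; while Lemma \ref{lem:Mn1.prelim}(4) with $q=2$ gives $\E|\epsilon_j|^4=O(N^{-1}\delta_n^4)$. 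Hence $\E|\tilde{E}_j(z)-E_j(z)|^2=O(N^{-1}\delta_n^2)$. Simultaneously, the boundedness of $\Tilde{\beta}_j$ and $b_j$ together with $\E|\epsilon_j|^2=O(N^{-1})$ give $\E|\tilde{E}_j(z)|^2,\,\E|E_j(z)|^2=O(N^{-1})$.

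A second application of Cauchy-Schwartz to each cross product then produces
\begin{align*}
\E\bigl|\bigl(\tilde{E}_j(z_1)-E_j(z_1)\bigr)\tilde{E}_j(z_2)\bigr|\le \sqrt{O(N^{-1}\delta_n^2)\cdot O(N^{-1})}=O(N^{-1}\delta_n),
\end{align*}
and an identical estimate for the second cross term. Summing over $j=1,\ldots,N$ yields total $L^1$-norm $O(\delta_n)=o(1)$, and Markov's inequality upgrades this to convergence in probability to zero. The one subtle point to flag is that a naive bound using only $|\Tilde{\beta}_j-b_j|\le 2|z|v^{-1}$ together with $\E|\epsilon_j|^2=O(N^{-1})$ gives only $O(1)$ after summation; the extra $\delta_n$ factor that forces the bound to vanish comes from pairing the sharper fourth-moment control on $\epsilon_j$ from Lemma \ref{lem:Mn1.prelim}(4) against the $L^2$-bound on $\Tilde{\beta}_j-b_j$ from (\ref{eq:beta.tilde.b}), rather than relying on the pointwise bound.
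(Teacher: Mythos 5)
Your proof is correct and follows essentially the same route as the paper: the same telescoping decomposition into two cross terms, followed by Cauchy--Schwartz and the moment bounds of Lemma \ref{lem:Mn1.prelim} (boundedness of $\Tilde{\beta}_j,b_j$, the $L^2$-estimate (\ref{eq:beta.tilde.b}), and the $\epsilon_j$ moment bounds), concluding with Markov's inequality. The only cosmetic difference is how the moments are paired — the paper uses the $L^2$-bound on $\Tilde{\beta}_j-b_j$ against fourth moments of $\epsilon_j$ to get $O(\delta_n^2)$ in total, while you interpolate to a fourth moment of $\Tilde{\beta}_j-b_j$ and get $O(\delta_n)$ — and both suffice.
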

\begin{proof}
For each $j$, we write
\begin{align*}
    &\E_{j-1}[\E_j(\Tilde{\beta}_j(z_1)\epsilon_j(z_1))\E_j(\Tilde{\beta}_j(z_2)\epsilon_j(z_2))]-\E_{j-1}[\E_j(b_j(z_1)\epsilon_j(z_1))\E_j(b_j(z_2)\epsilon_j(z_2))]\\
   =& \E_{j-1}[\E_j((\Tilde{\beta}_j(z_1)-b_j(z_1))\epsilon_j(z_1))\E_j(\Tilde{\beta}_j(z_2)\epsilon_j(z_2))]\\
   &+\E_{j-1}[\E_j(b_j(z_1)\epsilon_j(z_1))\E_j((\Tilde{\beta}_j(z_2)-b_j(z_2))\epsilon_j(z_2))]\\
   :=&A_1+A_2.
\end{align*}
We shall bound $A_1$ and $A_2$ separately. From the identities (\ref{eq:bd.beta}) and (\ref{eq:beta.tilde.b}) established in Lemma \ref{lem:Mn1.prelim}, we have
\begin{align*}
    |\E A_1|\leq &\E \left[\E_j|(\Tilde{\beta}_j(z_1)-b_j(z_1))\epsilon_j(z_1)|\E_j|\Tilde{\beta}_j(z_2)\epsilon_j(z_2)|\right]\\
    \leq&C_{z_2,v_0}\E \left[|\Tilde{\beta}_j(z_1)-b_j(z_1)|\big|\epsilon_j(z_1)\E_j|\epsilon_j(z_2)|\big|\right]\\
    \leq &N^{-1/2}C_{z_1,z_2,v_0}\E ^{1/4}|\epsilon_j(z_1)|^4\E ^{1/4}|\epsilon_j(z_2)|^4.
\end{align*}
    Along with the concentration result derived in Lemma \ref{lem:concen}, it is straightforward to verify
\begin{align*}
    |\E A_1|\leq C_{z_1,z_2,v_0}\delta_n^2N^{-1},
\end{align*}
and similarly
\begin{align*}
    |\E A_2|\leq C'_{z_1,z_2,v_0}\delta_n^2N^{-1}.
\end{align*}
Collecting the terms gives
\begin{align*}
    \E &\left|\sum_{j=1}^N\E_{j-1}[\E_j(\Tilde{\beta}_j(z_1)\epsilon_j(z_1))\E_j(\Tilde{\beta}_j(z_2)\epsilon_j(z_2))]\right.\\
    &\quad\quad\quad\quad\quad\quad\quad\quad\left.-\sum_{j=1}^N\E_{j-1}[\E_j(b_j(z_1)\epsilon_j(z_1))\E_j(b_j(z_2)\epsilon_j(z_2))]\right|\rightarrow 0,
\end{align*}
and by Markov's inequality we complete the proof.
\end{proof}

\subsection{Proof of Lemma \ref{lem:A123}}
First, we establish a useful lemma.
\begin{lemma}\label{app:lem.prop}
Recall $D_{ij}$, $\beta_{ij}$, $b_{ij}$ defined in equation (\ref{eq:beta.ij}). Further define
\begin{align}
    \Tilde{\beta}_{ij}(z)=\frac{1}{1+N^{-1}\text{tr}(D_{ij}^{-1}T_i)},\quad \epsilon_{ij}=r_i^{-1}D_{ij}^{-1}r_i-N^{-1}\text{tr}(D_{ij}^{-1}T_i).
\end{align}
    Under Assumption \ref{assump_trunc}, for each $z\in\mathbbm{C}^+$ with $\text{Im}z>v$, 
    \begin{align}
         &\E \left|\beta_{ij}(z)-\psi_i(z)\right|^2\leq C|z|^8v^{-14}N^{-1}
    \end{align}
\end{lemma}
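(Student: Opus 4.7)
The plan is to decompose
\[
\beta_{ij}-\psi_i \;=\; (\beta_{ij}-\Tilde{\beta}_{ij}) + (\Tilde{\beta}_{ij}-b_{ij}) + (b_{ij}-\psi_i)
\]
and bound each of the three terms separately in $L^2$; Minkowski's inequality then delivers the claim. The first two pieces are random and contribute the dominant $N^{-1}$ rate, while the third is deterministic and of smaller order $N^{-2}$.

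For the first piece, the algebraic identity $\beta_{ij}-\Tilde{\beta}_{ij}=-\beta_{ij}\Tilde{\beta}_{ij}\epsilon_{ij}$, the Nevanlinna-type bounds $|\beta_{ij}|,|\Tilde{\beta}_{ij}|\leq |z|v^{-1}$ (obtained exactly as in part~1 of Lemma~\ref{lem:Mn1.prelim} after verifying that $z\mapsto z\cdot N^{-1}x_i^T D_{ij}^{-1}T_ix_i$ and $z\mapsto z\cdot N^{-1}\text{Tr}(D_{ij}^{-1}T_i)$ are Nevanlinna), together with Lemma~\ref{lem:concen} applied with $B_1=B_2=D_{ij}^{-1}T_i$ (using $\|D_{ij}^{-1}\|\leq v^{-1}$), give $\E|\beta_{ij}-\Tilde{\beta}_{ij}|^2 \leq |z|^4 v^{-4}\,\E|\epsilon_{ij}|^2 \leq C|z|^4 v^{-6}N^{-1}$.

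For the second piece I would mimic the martingale-difference argument used for $\E|\Tilde{\beta}_j-b_j|^2$ in part~3 of Lemma~\ref{lem:Mn1.prelim}. Writing $\Tilde{\beta}_{ij}-b_{ij} = b_{ij}\Tilde{\beta}_{ij}\cdot N^{-1}\bigl(\E\text{Tr}(D_{ij}^{-1}T_i)-\text{Tr}(D_{ij}^{-1}T_i)\bigr)$, I would introduce the further rank-one removal $D_{ijk}=D_{ij}-r_kr_k^T$ for $k\notin\{i,j\}$ and expand the fluctuation as the telescoping martingale sum $\sum_{k\notin\{i,j\}}(\E_k-\E_{k-1})\text{Tr}[T_i(D_{ij}^{-1}-D_{ijk}^{-1})]$. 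Lemma~\ref{lemma:bound.trA} bounds each increment uniformly by $Cv^{-1}\|T_i\|$; orthogonality of martingale differences gives an $O(Nv^{-2})$ variance, and absorbing $|b_{ij}\Tilde{\beta}_{ij}|^2\leq |z|^4v^{-4}$ yields $\E|\Tilde{\beta}_{ij}-b_{ij}|^2\leq C|z|^4v^{-6}N^{-1}$. Finally, the deterministic identity $b_{ij}-\psi_i = b_{ij}\psi_i\cdot N^{-1}\bigl(\E\text{Tr}(D^{-1}T_i)-\E\text{Tr}(D_{ij}^{-1}T_i)\bigr)$ reduces the third piece to a rank-two perturbation of the resolvent; two successive applications of the Sherman--Morrison identity~(\ref{eq:D.Dj}), mirroring the analysis of $|b_j-\psi_j|$ at the end of part~1 of Lemma~\ref{lem:Mn1.prelim} and the estimate~(\ref{eq:bd.betaj.rhoj}), give $|b_{ij}-\psi_i|\leq CN^{-1}|z|^4v^{-7}$ and hence $|b_{ij}-\psi_i|^2\leq CN^{-2}|z|^8v^{-14}$, which is subdominant.

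The main obstacle is the book-keeping in the second step: the martingale filtration must be taken with respect to the $\sigma$-field generated by $\{r_k\}_{k\notin\{i,j\}}$ rather than $\{r_k\}_{k\neq j}$, which requires checking that the uniform per-increment bound continues to follow from the rank-one perturbation calculus and that the $\epsilon_{ij}$-style concentration remains available under the additional conditioning on $r_i$. Once this is in place, the remainder of the argument is a direct transcription of the corresponding pieces in the proof of Lemma~\ref{lem:Mn1.prelim}, and the three bounds combine to give the advertised $C|z|^8v^{-14}N^{-1}$ estimate (with the exponents on $|z|$ and $v$ dictated by Step~3).
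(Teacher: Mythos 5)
Your proposal is correct and follows essentially the same route as the paper: the paper's proof decomposes $\beta_{ij}-\psi_i$ through $\Tilde{\beta}_{ij}$ and $b_{ij}$, obtains $\E|\beta_{ij}-\Tilde{\beta}_{ij}|^2\leq |z|^4v^{-4}\E|\epsilon_{ij}|^2$ from the same algebraic identity plus Lemma \ref{lem:concen}, and gets the remaining two bounds by applying Lemma \ref{lem:Mn1.prelim} to $B_n-r_jr_j^T$ in place of $B_n$ (which is exactly the martingale and rank-one-perturbation bookkeeping you spell out). The only cosmetic difference is that the paper routes the deterministic piece through $b_i$ as an intermediate rather than treating $b_{ij}-\psi_i$ as a single rank-two perturbation.
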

\begin{proof}
    Applying Lemma \ref{lem:Mn1.prelim} to $B_n-r_jr_j^T$ instead of $B_n$, we immediately have
    \begin{align*}
    &|\beta_{ij}|,|\deteqv{\beta}_{ij}|\leq |z|v^{-1},\\
         &\E \left|\Tilde{\beta}_{ij}(z)-b_{ij}(z)\right|^2\leq C|z|^4v^{-6}N^{-1}\\
        &|b_{ij}(z)-b_j(z)|\leq C|z|^4v^{-7}N^{-1}.
    \end{align*}
    By direct calculation, 
\begin{align*}
    |\beta_{ij}-\deteqv{\beta}_{ij}|=|\beta_{ij}\deteqv{\beta}_{ij}\epsilon_{ij}|\leq |z|^2v^{-2}|\epsilon_{ij}|.
\end{align*}
As a result, by the concentration result established in Lemma \ref{lem:concen},
\begin{align*}
    \E |\beta_{ij}-\deteqv{\beta}_{ij}|^2\leq |z|^4v^{-4}\E |\epsilon_{ij}|^2=C|z|^4v^{-4}N^{-1}.
\end{align*}
    Taken together with the bound on $|b_j(z)-\psi_j(z)|$ established in Lemma \ref{lem:Mn1.prelim}, we conclude that
    \begin{align*}
        \E \left|\beta_{ij}(z)-\psi_i(z)\right|^2\leq C|z|^8v^{-14}N^{-1}.
    \end{align*}
\end{proof}

\subsubsection*{Bounding  $\text{tr}(A_2(z)M)$}

Let $M$ be any (possibly random) matrix with non-random bound on its spectral norm, denoted by $\TVert{M}$, then
\begin{align*}
    \E |\text{tr}A_2(z)M|&\leq\sum_{i\neq j} \E \left|(\beta_{ij}(z)-\psi_i(z))r_i^TD_{ij}^{-1}MR_j^{-1}r_i\right|\\
    &\leq\sum_{i\neq j}\E ^{1/2}\left|\beta_{ij}(z)-\psi_i(z)\right|^2\E \left|r_i^TD_{ij}^{-1}MR_j^{-1}r_i\right|.
\end{align*}
where
\begin{align*}
   &\E \left|r_i^TD_{ij}^{-1}MR_j^{-1}r_i\right|\\
   \leq &\E ^{1/2}\left|r_i^TD_{ij}^{-1}MR_j^{-1}r_i-N^{-1}\text{Tr}D_{ij}^{-1}MR_jT_j\right|^2+\E \left|N^{-1}\text{Tr}D_{ij}^{-1}MR_jT_j\right|\leq C_z
\end{align*}
for a constant $C_z$ that depends on $z$.
Therefore, together with Lemma \ref{app:lem.prop}, we have
\begin{align}
    \E |\text{tr}A_2(z_1)M|\leq O(N^{1/2}).
\end{align}

\subsubsection*{Bounding  $\text{tr}(A_3(z)M)$}
Recall
\begin{align*}
    A_3(z)=\frac{1}{N}\sum_{i\neq j}\psi_i(z)R_j^{-1}T_i(D_{ij}^{-1}-D_j^{-1}).
\end{align*}
By Lemma \ref{lemma:bound.trA}, for any (possibly random) matrix $M$ with non-random bound on its spectral norm, we obtain
\begin{align*}
    |\text{tr}A_3(z)M|
    &\leq \frac{1}{N}\sum_{i\neq j}\left|\text{tr}(D_{ij}^{-1}-D_j^{-1})\psi_i(z)R_j^{-1}M\right|\\
    &\leq \frac{1}{N}\sum_{i\neq j}v^{-1}|\psi_i(z)| \cdot \lVert R_j^{-1}\rVert \cdot\TVert{M}=O(1).
\end{align*}
\subsubsection*{Bounding  $\text{tr}(A_1(z)M)$}
Let $M$ be a \textbf{non-random} matrix. Recall
\begin{align*}
    A_1(z)=\sum_{i\neq j}\psi_i(z)R_j^{-1}(r_ir_i^T-N^{-1}T_i)D_{ij}^{-1}.
\end{align*}
Define $H_{ij}:=T_i^{1/2}D_{ij}^{-1}MR_j^{-1}T_i^{1/2}$. Note that $H_{ij}$ does not depend on $x_i$, thus
\begin{align*}
    \E |\text{tr}(A_1(z)M)|
    &= \E \left|\sum_{i\neq j}\psi_i(z)\left(r_i^TD_{ij}^{-1}MR_j^{-1}r_i-N^{-1}\text{tr}(T_iD_{ij}^{-1}MR_j^{-1})\right)\right|\\
    &=\E \left| \sum_{i\neq j}\psi_i(z)\left(N^{-1}x_i^TT_i^{1/2}D_{ij}^{-1}MR_j^{-1}T_i^{1/2}x_i-N^{-1}\text{tr}(T_i^{1/2}D_{ij}^{-1}MR_j^{-1}T_i^{1/2})\right)\right|\\
    &=\E \left| \sum_{i\neq j}\psi_i(z)\left(N^{-1}x_i^TH_{ij}x_i-N^{-1}\text{tr}H_{ij}\right)\right|\\
    &\leq \sum_{i\neq j}\E ^{1/2}\left| \psi_i(z)\right|^2\E ^{1/2}\left[\E \left[\left|\left(N^{-1}x_i^TH_{ij}x_i-N^{-1}\text{tr}H_{ij}\right)\right|^2\bigg|x_{-i}\right]\right]\\
    &\leq O(N^{1/2}),
\end{align*}
where the last step follows from the concentration result in  (\ref{eq:martingale.concentr}) and the boundedness of $\psi_i(z)$ established in Lemma \ref{lem:Mn1.prelim}.

\subsection{Proof of Lemma \ref{lem:concen.spectral.S}}
    \begin{proof}

    Let 
    \begin{align*}
        Y&=\begin{bmatrix}
            0&\sum_{r=1}^k\Sigma_r^{1/2}X_r^TL_r\\
            \sum_{r=1}^kL_r^{1/2}X_r\Sigma_r&0
        \end{bmatrix},
    \end{align*}
    then
    \begin{align*}
        Y^2&=\begin{bmatrix}
            B_n&0\\
            0&\underline{B_n}
        \end{bmatrix},
    \end{align*}
    where $\underline{B_n}=(\sum_{r=1}^kL_r^{1/2}X_r\Sigma_r)(\sum_{r=1}^kL_r^{1/2}X_r\Sigma_r)^T$. Hence
    \begin{align*}
        \text{Tr}f(Y^2)&=2\text{Tr}f(B_n)+(n-N)f(0).
    \end{align*}
    Since Gaussian random variables are logarithmic Sobolev, we derive the concentration result by applying Theorem 1.1 from \cite{Guionnet2000}. This application is adapted from the methodology used to prove Corollary 1.8 in \cite{Guionnet2000}, specifically for functions $f$ such that $g(x)=f(x^2)$ is Lipschitz.
   \end{proof} 
\subsection{Additional justifications for equation (\ref{eq:cu.mn1.conv})}\label{app:subsec.cu.mn1.conv}
Our proof relies on the following result. 
\begin{lemma}\label{lem:convd}
    Let $X_n$ be a sequence of random variables. If for each pair of $\epsilon$, $\eta>0$, there exists $Y_n$, such that 
    \begin{align*}
        Y_n\convd X,\quad \mathbbm{P}(|X_n-Y_n|>\epsilon)<\eta,
    \end{align*}
    then
    \begin{align*}
        X_n\convd X.
    \end{align*}
\end{lemma}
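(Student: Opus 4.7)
The plan is to prove $X_n \convd X$ by verifying pointwise convergence of the distribution functions at continuity points of $F_X$, which is the standard characterization of weak convergence. The key subtlety is that the approximating sequence $Y_n$ may depend on $(\epsilon, \eta)$, so a direct appeal to Slutsky's theorem (which would require a single fixed $Y_n$) does not apply; instead I will interleave the choice of $Y_n$ with the limits in $\epsilon$ and $\eta$.

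First I would fix an arbitrary continuity point $x$ of $F_X$. Because $F_X$ has at most countably many discontinuities, I can restrict to $\epsilon > 0$ such that $x + \epsilon$ and $x - \epsilon$ are also continuity points of $F_X$. For any such $\epsilon$ and any $\eta > 0$, I would use the hypothesis to select a sequence $Y_n = Y_n^{(\epsilon,\eta)}$ with $Y_n \convd X$ and $\mathbbm{P}(|X_n - Y_n| > \epsilon) < \eta$ eventually in $n$. The standard two-sided inclusion
\begin{align*}
\{X_n \le x\} &\subseteq \{Y_n \le x + \epsilon\} \cup \{|X_n - Y_n| > \epsilon\},\\
\{Y_n \le x - \epsilon\} &\subseteq \{X_n \le x\} \cup \{|X_n - Y_n| > \epsilon\}
\end{align*}
then gives
\begin{align*}
F_{Y_n}(x - \epsilon) - \eta \;\le\; F_{X_n}(x) \;\le\; F_{Y_n}(x + \epsilon) + \eta
\end{align*}
for all sufficiently large $n$.

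Taking $\limsup$ and $\liminf$ in $n$ and using $Y_n \convd X$ at the continuity points $x \pm \epsilon$, I would obtain
\begin{align*}
F_X(x - \epsilon) - \eta \;\le\; \liminf_{n} F_{X_n}(x) \;\le\; \limsup_{n} F_{X_n}(x) \;\le\; F_X(x + \epsilon) + \eta.
\end{align*}
Letting $\eta \downarrow 0$ removes the $\eta$ terms; letting $\epsilon \downarrow 0$ along a sequence of continuity points of $F_X$ and invoking continuity of $F_X$ at $x$ gives $F_{X_n}(x) \to F_X(x)$. Since $x$ was an arbitrary continuity point, this yields $X_n \convd X$.

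The only nontrivial point is making sure the quantifier order is handled correctly: the hypothesis supplies a possibly different $Y_n$ for every $(\epsilon, \eta)$, but the above argument only ever uses one such sequence at a time before taking limits, so no uniformity in $(\epsilon, \eta)$ is needed. The rest is routine bookkeeping with distribution functions, so I do not expect any real obstacle beyond being careful with this dependence.
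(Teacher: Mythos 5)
Your proof is correct. The paper states this lemma without proof (it is the standard ``converging together'' argument, cf.\ Theorem 3.1 in Billingsley's \emph{Convergence of Probability Measures}), and your argument is exactly the right one: the two-sided inclusion gives $F_{Y_n}(x-\epsilon)-\eta\le F_{X_n}(x)\le F_{Y_n}(x+\epsilon)+\eta$, and you correctly handle the only subtle point, namely that $Y_n$ may depend on $(\epsilon,\eta)$ but the quantities $\liminf_n F_{X_n}(x)$ and $\limsup_n F_{X_n}(x)$ do not, so one may send $\eta\downarrow 0$ and then $\epsilon\downarrow 0$ along continuity points of $F_X$ after the limit in $n$ has been taken.
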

 Since $g$ is analytic and bounded respectively on $\mathcal{C}_u$ and $\overline{\mathcal{C}_u}$, it is also uniformly continuous. Therefore, for any $\epsilon>0$, there exists $\delta_1>0$ such that for all $|z_1-z_2|<\delta_1$, 
 \begin{align}\label{eq:bd.g}
     |g(z_1)-g(z_2)|<\epsilon.
 \end{align}
    Recall by definition of tightness of $\{M_n^1(z)\}$ on $\mathcal{C}_u$, for any positive $\epsilon$, $\eta>0$, there exists $\delta_2\in\left[0,1\right]$ such that, for any $|z_1-z_2|\leq \delta_2$, we have
\begin{align}\label{eq:bd.mn1}
    \mathbbm{P}\left[\left|M_n^1(z_1)-M_n^1(z_2)\right|>\epsilon\right]<\eta.
\end{align} 
Let $\delta=\min\{\delta_1,\delta_2\}$, and partition $[-r,r]$ into subintervals each of length less than $\delta$. Denote the partition by $\{-r=x_0<\ldots< x_m=r\}$, with the length of each subinterval as $\Delta x=2r/[2r/\delta]$, then we immediately have
\begin{align*}
    \frac{\sum_{i=1}^m \Delta x g(x_i+iv_0)M_n^1(x_i+iv_0)}{\sqrt{\sum_{i=1}^m\sum_{j=1}^m(\Delta x )^2g(x_{i}+iv_0)g(x_{j}-iv_0)\sigma_n^2(x_i+iv_0,{x}_j-iv_0)}}\rightarrow CN(0,1).
\end{align*}
To conclude the proof, from Lemma \ref{lem:convd} it suffices to prove that for any $\epsilon'$, $\eta'>0$, there exists appropriately chosen $\epsilon$, $\eta$, and the corresponding $\delta$, $m$, such that
\begin{align*}
 &\mathbbm{P}\left[ \left|\frac{\int_{\mathcal{C}_u}g(z)M_n^1(z)dz}{\sqrt{\int_{\mathcal{C}_u}\int_{\overline{\mathcal{C}_u}} g(z_1)g(z_2)\sigma^2_n(z_1,z_2) dz_1 dz_2}}\right.\right.  \\
&\qquad\qquad\qquad-\left.\left.\frac{\sum_{i=1}^m \Delta x g(x_i+iv_0)M_n^1(x_i+iv_0)}{\sqrt{\sum_{i=1}^m\sum_{j=1}^m(\Delta x )^2g(x_{i}+iv_0)g(x_{j}-iv_0)\sigma_n^2(x_i+iv_0,{x}_j-iv_0)}}\right| > \epsilon' \right] < \eta',
\end{align*}
which is true from properties (\ref{eq:bd.g}) and (\ref{eq:bd.mn1}).

\section{Bias calculation}
We begin by establishing some preliminary results. 
\begin{lemma}\label{app:lem.bias}
    Recall $\beta_j(z)$, $b_j(z)$ and $\epsilon_j(z)$ defined in Section \ref{sec:conv.mn1}. Under the assumptions of Lemma \ref{lem:concen}, we have
    \begin{align}
        &\E [\beta_j(z)-b_j(z)]=b_j^3(z)\E [\epsilon_j^2(z)]+o(N^{-1})\label{eq:beta.j.b.j.e}\\
        &\E |\beta_j(z)-\psi_j(z)|^2=O(N^{-1}).\label{eq:betaj.psij.2mom}
    \end{align}
\end{lemma}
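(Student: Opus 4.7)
The plan is to derive both statements from the single algebraic identity
$\beta_j^{-1}-b_j^{-1}=r_j^TD_j^{-1}r_j-N^{-1}\E[\text{Tr}(D_j^{-1}T_j)]=\rho_j(z)$,
the quantity introduced at (\ref{eq:def.rhoj}). Multiplying through by $-\beta_j b_j$ gives $\beta_j-b_j=-\beta_j b_j\rho_j$, and substituting this expression for $\beta_j$ back into itself twice yields the key three-term expansion
$\beta_j-b_j=-b_j^2\rho_j+b_j^3\rho_j^2-b_j^3\beta_j\rho_j^3$.

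For part (i), I would take expectations and dispatch the three terms separately. The linear term vanishes because $\E[\rho_j]=0$: by the independence of $r_j$ from $D_j$ and $\E[r_jr_j^T]=N^{-1}T_j$, one has $\E[r_j^TD_j^{-1}r_j]=N^{-1}\E[\text{Tr}(D_j^{-1}T_j)]$. The cubic remainder is $o(N^{-1})$ by the uniform bound $|\beta_j|\leq|z|v^{-1}$ from Lemma \ref{lem:Mn1.prelim}(1) combined with an estimate $\E|\rho_j|^3=o(N^{-1})$; the latter is obtained by decomposing $\rho_j=\epsilon_j+\zeta_j$ with $\zeta_j=N^{-1}(\text{Tr}(D_j^{-1}T_j)-\E\,\text{Tr}(D_j^{-1}T_j))$ and applying H\"older interpolation to the moment bounds $\E|\epsilon_j|^{2q}=O(N^{-1}\delta_n^{4q-4})$ from Lemma \ref{lem:Mn1.prelim}(4) together with a companion bound on $\zeta_j$ from Gaussian--Lipschitz concentration of the trace (compare Lemma \ref{lem:concen.spectral.S}). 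The middle term then supplies the leading contribution $b_j^3\E[\rho_j^2]$; conditioning on $\{r_i\}_{i\neq j}$ kills the cross term $\E[\epsilon_j\zeta_j]$ (since $\E[\epsilon_j\mid r_{-j}]=0$ while $\zeta_j$ is $r_{-j}$-measurable), giving $\E[\rho_j^2]=\E[\epsilon_j^2]+\E[\zeta_j^2]$, which matches the stated coefficient $b_j^3\E[\epsilon_j^2]$ up to the $\zeta_j^2$ contribution.

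For part (ii), I would use the decomposition $\beta_j-\psi_j=(\beta_j-b_j)+(b_j-\psi_j)$. The deterministic piece is bounded via Lemma \ref{lem:Mn1.prelim}(1): $|b_j-\psi_j|^2\leq C|z|^8v^{-14}N^{-2}$, so it contributes $O(N^{-2})$ to the second moment. For the stochastic piece, the identity $\beta_j-b_j=-b_j\beta_j\rho_j$ combined with the uniform bounds $|b_j|,|\beta_j|\leq|z|v^{-1}$ reduces the problem to showing $\E|\rho_j|^2=O(N^{-1})$, which follows from the $q=1$ case of the $\epsilon_j$-moment bound in Lemma \ref{lem:Mn1.prelim}(4) together with the $O(N^{-1})$ variance estimate for $\zeta_j$ obtained in the proof of Lemma \ref{lem:Mn1.prelim}(3). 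Summing the two pieces yields $\E|\beta_j-\psi_j|^2=O(N^{-1})$.

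The main obstacle is the final identification in (i) of the quadratic coefficient as $b_j^3\E[\epsilon_j^2]$ rather than the a priori weaker $b_j^3\bigl(\E[\epsilon_j^2]+\E[\zeta_j^2]\bigr)$. Since the martingale-difference bound in the proof of Lemma \ref{lem:Mn1.prelim}(3) only yields $\E[\zeta_j^2]=O(N^{-1})$, an extra argument is required to upgrade this to $o(N^{-1})$ at the relevant scale; I expect the cleanest route is to expand each increment $K_i=\text{Tr}\,T_j(D_j^{-1}-D_{ji}^{-1})$ via the Woodbury identity (\ref{eq:D.Dj}), extract its deterministic equivalent, and exploit the cancellation of the leading piece so that the remaining per-increment fluctuation is $o(1)$, closing the gap.
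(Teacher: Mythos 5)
Your proposal follows essentially the same route as the paper: the expansion $\beta_j-b_j=-b_j^2\rho_j+b_j^3\rho_j^2-b_j^3\beta_j\rho_j^3$, the vanishing of $\E[\rho_j]$, the split $\rho_j=\epsilon_j+\zeta_j$ with the cross term killed by conditioning, and the reduction of (ii) to $\E|\rho_j|^2=O(N^{-1})$ plus the deterministic bound on $|b_j-\psi_j|$. The one obstacle you flag --- that the crude martingale bound from Lemma \ref{lem:Mn1.prelim}(3) only gives $\E[\zeta_j^2]=O(N^{-1})$ --- is resolved in the paper exactly as you propose: Lemma \ref{lem.0.13} refines each martingale increment via the Woodbury identity and the concentration of quadratic forms to get $\E\left|\text{Tr}(D_j^{-1}T_j)-\E\,\text{Tr}(D_j^{-1}T_j)\right|^2\leq C\lVert T_j\rVert^2=O(1)$, hence $\E[\zeta_j^2]=O(N^{-2})=o(N^{-1})$, which is the paper's term $H_3$.
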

\begin{proof}
    Recall $\rho_j(z)$ defined in (\ref{eq:def.rhoj}). Applying the property established in (\ref{eq:beta.j.b.j}) gives
\begin{align}
   \beta_j(z)
   &=b_j(z)-\rho_j(z)b_j^2(z)+\beta_j(z)b_j^2(z)\rho_j^2(z)\label{eq:beta.j.b.j2}\\
   &=b_j(z)-\rho_j(z)b_j^2(z)+b_j^3(z)\rho_j^2(z)-\beta_j(z)b_j^3(z)\rho_j^3(z).\nonumber
\end{align}
Hence
\begin{align*}
     \E [\beta_j(z)-b_j(z)]=b_j^3(z)\E [\rho_j^2(z)]-b_j^3(z)\E [\beta_j(z)\rho_j^3(z)].
\end{align*}
By definition of $\epsilon_j$ and $\rho_j$, we further write 
\begin{align*}
     \E [\beta_j(z)-b_j(z)]=&b_j^3(z)\E [\epsilon_j^2(z)]
    -b_j^3(z)\E [\beta_j(z)\rho_j^3(z)]\\
    &+N^{-2}b_j^3(z)\E \left[\left(\text{Tr}(D_j^{-1}(z)T_j)-\E [\text{Tr}(D_j^{-1}(z)T_j)\right)^2\right]\\
    :=&H_1+H_2+H_3.
\end{align*}
By the boundedness of $b_j$ and $\beta_j$ established in Lemma \ref{lem:Mn1.prelim} and the concentration results in Lemma \ref{lem:concen}, we bound $H_2$ by
\begin{align*}
     |H_2|\leq |b_j(z)|^3\E ^{1/2}|\beta_j(z)|^2\E ^{1/2}|\rho_j(z)|^6\leq CN^{-3/2}=o(N^{-1}).
\end{align*}
Similarly, by the boundedness of $b_j$ and Lemma \ref{lem.0.13}, we bound $H_3$ by 
\begin{align*}
    |H_3|\leq CN^{-2}=o(N^{-1}).
\end{align*}
It follows then that
\begin{align*}
     \E [\beta_j(z)-b_j(z)]=&b_j^3(z)\E [\epsilon_j^2(z)]+o(N^{-1}).
\end{align*}
By definition, we also have
\begin{align*}
    \beta_j-\psi_j&=-\beta_j\psi_j(r_j^TD_j^{-1}r_j-N^{-1}\E [\text{Tr}(D^{-1}(z)T_j)])\\
    &=-\beta_j\psi_j(r_j^TD_j^{-1}r_j-N^{-1}\text{Tr}(D_j^{-1}(z)T_j))\\
    &\quad-\beta_j\psi_j(N^{-1}\text{Tr}(D_j^{-1}(z)T_j)-N^{-1}\E [\text{Tr}(D_j^{-1}(z)T_j)])\\
    &\quad-\beta_j\psi_j(N^{-1}\E [\text{Tr}(D_j^{-1}(z)T_j)]-N^{-1}\E [\text{Tr}(D^{-1}(z)T_j)]).
\end{align*}
We conclude the proof by appealing to Lemma \ref{lem:concen}, Lemma \ref{lem.0.13} and the properties (\ref{eq:djtj.dtj}) and (\ref{eq:bd.betaj.rhoj}).
\end{proof}
\subsection{Calculations of $d_{n0}$}\label{subsec:sec:dn0.comp}
\subsubsection*{Computing $\mathcal{J}_1$}
Applying (\ref{eq:beta.j.b.j2}) yields
\begin{align*}
     \mathcal{J}_1
     &=-\sum_{j=1}^N\E \beta_j(z)\left(\text{Tr}r_j^TD_j^{-1}R^{-1}r_j -\frac{1}{N}\E \text{Tr}R^{-1}T_jD_j^{-1}\right)\\
     &=+\sum_{j=1}^Nb_j^2(z)\E \rho_j(z)\text{Tr}r_j^TD_j^{-1}R^{-1}r_j\\
     &\quad  -\sum_{j=1}^Nb_j^2(z)\E \beta_j(z)\rho_j^2(z)\left(\text{Tr}r_j^TD_j^{-1}R^{-1}r_j-\frac{1}{N}\text{Tr}R^{-1}T_jD_j^{-1}\right)   \\
     &\quad -\frac{1}{N}\sum_{j=1}^Nb_j^2(z)\E \beta_j(z)\rho_j^2(z)\left(\text{Tr}R^{-1}T_jD_j^{-1} -\E \text{Tr}R^{-1}T_jD_j^{-1} \right)\\
     &\quad-\frac{1}{N}\sum_{j=1}^N(b_j^2(z)\E \beta_j(z)\rho_j^2(z)+b_j(z)-\E \beta_j(z))\E \text{Tr}R^{-1}T_jD_j^{-1} \\
     &=+\sum_{j=1}^Nb_j^2(z)\E \rho_j(z)\left(r_j^TD_j^{-1}R^{-1}r_j-\frac{1}{N}\E \text{Tr}R^{-1}T_jD_j^{-1}\right)\\
     &\quad  -\sum_{j=1}^Nb_j^2(z)\E \beta_j(z)\rho_j^2(z)\left(r_j^TD_j^{-1}R^{-1}r_j-\frac{1}{N}\text{Tr}R^{-1}T_jD_j^{-1}\right)   \\
     &\quad -\frac{1}{N}\sum_{j=1}^Nb_j^2(z)\E \beta_j(z)\rho_j^2(z)\left(\text{Tr}R^{-1}T_jD_j^{-1} -\E \text{Tr}R^{-1}T_jD_j^{-1} \right)\\
     &:=\mathcal{J}_{11}+\mathcal{J}_{12}+\mathcal{J}_{13}.
\end{align*}
By definitions of $\rho_j$ in (\ref{eq:def.rhoj}) and $\epsilon_j$ in (\ref{eq:eps}), we have
\begin{align*}
    \mathcal{J}_{11}
    &=\sum_{j=1}^Nb_j^2(z)\E \rho_j(z)\left(r_j^TD_j^{-1}R^{-1}r_j-\frac{1}{N}\E \text{Tr}R^{-1}T_jD_j^{-1}\right)\\
    &=\sum_{j=1}^Nb_j^2(z)\E \epsilon_j(z)\left(r_j^TD_j^{-1}R^{-1}r_j-\frac{1}{N}\E \text{Tr}R^{-1}T_jD_j^{-1}\right)\\
    &\quad +\sum_{j=1}^Nb_j^2(z)\E \left(N^{-1}\text{Tr}(D_j^{-1}(z)T_j)-N^{-1}\E [\text{Tr}(D_j^{-1}(z)T_j)]\right)\\
    &\quad\quad\quad\quad\quad\quad\quad\quad\quad\quad \times\left(r_j^TD_j^{-1}R^{-1}r_j-\frac{1}{N}\E \text{Tr}R^{-1}T_jD_j^{-1}\right)\\
    &:=\mathcal{J}_{111}+\mathcal{J}_{112}.
\end{align*}
Apply Lemma \ref{lemma:epsilon.exp}, then
\begin{align*}
    \mathcal{J}_{111}=\frac{2}{N^2}\sum_{j=1}^Nb_j^2(z)\E [\text{Tr}R^{-1}T_jD_j^{-1}T_jD_j^{-1}]+o(1).
\end{align*}
By Cauchy-Schwartz, the boundedness of $b_j$ established in Lemma \ref{lem:Mn1.prelim}, Lemma \ref{lem.0.13} and Lemma \ref{lem:concen}, we have
\begin{align*}
    |\mathcal{J}_{112}|\leq C/N\rightarrow 0.
\end{align*}
By Cauchy-Schwartz and Lemma \ref{lem:concen} , we also have
\begin{align*}
    |\mathcal{J}_{12}|&\leq \sum_{j=1}^N \left(\E |\beta_j(z)|^4\right)^{1/4}\left(\E |\rho_j(z)|^8\right)^{1/4}\left(\E |r_j^TD_j^{-1}R^{-1}r_j-\frac{1}{N}\text{Tr}R^{-1}T_jD_j^{-1}|^2\right)^{1/2}\\
    &\leq N\cdot C\cdot N^{-2}\cdot N \cdot N^{-1}\cdot \sqrt{N}\rightarrow 0.
\end{align*}
A similar argument gives
\begin{align*}
    |\mathcal{J}_{13}|&\leq C/N\rightarrow 0.
\end{align*}
Collecting the terms, we obtain
\begin{align*}
    \mathcal{J}_1=\frac{2}{N^2}\sum_{j=1}^Nb_j^2(z)\E [\text{Tr}R^{-1}T_jD_j^{-1}T_jD_j^{-1}]  +o(1).
\end{align*}
In Lemma \ref{lem:Mn1.prelim}, we established that 
\begin{align}
    |b_j(z)-\psi_j(z)|\leq \frac{C}{N} \label{eq:bd.bj.psij}
\end{align}
for some constant $C$ that depends on $z$.
As a result, together with Lemma \ref{lem.0.13}, we conclude that
\begin{align*}
    \mathcal{J}_1=
    \frac{2}{N^2}\sum_{j=1}^N\psi_j^2(z)\E [\text{Tr}R^{-1}T_jD^{-1}T_jD^{-1}] +o(1).
\end{align*}

\subsubsection*{Computing $\mathcal{J}_2$}
It follows from (\ref{eq:D.Dj}) and (\ref{eq:beta.j.b.j}) that
\begin{align*}
    \mathcal{J}_2
    &=- \frac{1}{N}\sum_{j=1}^N\E \beta_j(z)\left(\E \text{Tr}R^{-1}T_jD_j^{-1}-\E \text{Tr}R^{-1}T_jD^{-1}\right)\\
    &=- \frac{1}{N}\sum_{j=1}^N\E \beta_j(z)\E \beta_j(z)r_j^TD_j^{-1}R^{-1}T_jD_j^{-1}r_j\\
    &=- \frac{1}{N}\sum_{j=1}^N(b_j(z)-b_j(z)\E \rho_j\beta_j(z))\E (b_j(z)-b_j(z)\rho_j\beta_j(z))r_j^TD_j^{-1}R^{-1}T_jD_j^{-1}r_j\\
    &=- \frac{1}{N}\sum_{j=1}^Nb_j^2(z)\E r_j^TD_j^{-1}R^{-1}T_jD_j^{-1}r_j\\
    &\quad + \frac{2}{N}\sum_{j=1}^Nb_j^2(z)\E \rho_j\beta_j(z)r_j^TD_j^{-1}R^{-1}T_jD_j^{-1}r_j\\
    &\quad- \frac{1}{N}\sum_{j=1}^Nb_j^2(z)\E \rho_j\beta_j(z)\E \rho_j\beta_j(z)r_j^TD_j^{-1}R^{-1}T_jD_j^{-1}r_j\\
    &:=\mathcal{J}_{21}+\mathcal{J}_{22}+\mathcal{J}_{23}.
\end{align*}
Let us start by bounding $\mathcal{J}_{22}$. Rewrite $\mathcal{J}_{22}$ as
\begin{align*}
   \mathcal{J}_{22}
   &=\frac{2}{N}\sum_{j=1}^Nb_j^2(z)\E \rho_j\beta_j(z)\left(r_j^TD_j^{-1}R^{-1}T_jD_j^{-1}r_j-\frac{1}{N}\text{Tr}D_j^{-1}R^{-1}T_jD_j^{-1}T_j\right)\\
   &\quad+\frac{2}{N^2}\sum_{j=1}^Nb_j^2(z)\E \rho_j\beta_j(z)\text{Tr}D_j^{-1}R^{-1}T_jD_j^{-1}T_j.
\end{align*}
Then by the boundedness of $b_j$ and $\beta_j$ established in Lemma \ref{lem:Mn1.prelim} and the concentration results established in Lemma \ref{lem:concen}, we have
\begin{align*}
   &| \mathcal{J}_{22}|\\
   \leq&\frac{2}{N}\sum_{j=1}^N|b_j(z)|^2\E^{1/4} |\rho_j|^4\E^{1/4} |\beta_j(z)|^4\E^{1/2} |r_j^TD_j^{-1}R^{-1}T_jD_j^{-1}r_j-\frac{1}{N}\text{Tr}D_j^{-1}R^{-1}T_jD_j^{-1}T_j|^2\\
   \quad&+\frac{2C}{N}\sum_{j=1}^N|b_j(z)|^2\E^{1/4} |\rho_j|^4\E^{1/4} |\beta_j(z)|^4\leq C/\sqrt{N} \rightarrow 0.
\end{align*}
A similar argument gives
\begin{align*}
     | \mathcal{J}_{23}|=o(1) 
\end{align*}
Collecting the terms, we obtain
\begin{align*}
    \mathcal{J}_2=- \frac{1}{N^2}\sum_{j=1}^Nb_j^2(z)\E \text{Tr}R^{-1}T_jD_j^{-1}T_jD_j^{-1}+o(1).
\end{align*}
Finally, it follows from (\ref{eq:bd.bj.psij}) and Lemma \ref{lem.0.13} that
\begin{align*}
    \mathcal{J}_2=- \frac{1}{N^2}\sum_{j=1}^N\psi_j^2(z)\E \text{Tr}R^{-1}T_jD^{-1}T_jD^{-1}+o(1).
\end{align*}
\subsubsection*{Computing $\mathcal{J}_3$}
We make use of (\ref{eq:djtj.dtj}) and (\ref{eq:bd.betaj.rhoj}) to write
\begin{align}
    b_j(z)-\psi_j(z)&=-\frac{1}{N}b_j(z)\psi_j(z)\E [\text{Tr}(D_j^{-1}T_j)-\text{Tr}(D^{-1}(z)T_j)]\nonumber\\
    &=-\frac{1}{N^2}b_j^2(z)\psi_j(z)\E [\text{Tr}(D_j^{-1}T_jD_j^{-1}T_j)]+o(N^{-1}).\label{eq:bj.psij}
\end{align}
Together with (\ref{eq:beta.j.b.j.e}) established in Lemma \ref{app:lem.bias}, we have
\begin{align*}
    \E [\beta_j(z)-\psi_j(z)]
    =&-\frac{1}{N^2}b_j^2(z)\psi_j(z)\E [\text{Tr}(D_j^{-1}T_jD_j^{-1}T_j)]\\
    &+b_j^3(z)\E [\epsilon_j^2(z)]+o(N^{-1}).
\end{align*}
It then follows that
\begin{align*}
    \mathcal{J}_3
    &=- \frac{1}{N}\sum_{j=1}^N\E (\beta_j(z)-\psi_j(z))\E \text{Tr}R^{-1}T_jD^{-1}\\
    &=- \frac{1}{N}\sum_{j=1}^Nb_j^3(z)\E [\epsilon_j^2(z)]\E \text{Tr}R^{-1}T_jD^{-1}\\
    &\quad +\frac{1}{N^3}\sum_{j=1}^Nb_j^2(z)\psi_j(z)\E [\text{Tr}(D_j^{-1}T_jD_j^{-1}T_j)]\E \text{Tr}R^{-1}T_jD^{-1}+o(1).
\end{align*}
With (\ref{eq:bd.bj.psij}), we substitute $b_j$ with $\psi_j$ and write
\begin{align*}
    \mathcal{J}_3
    &=- \frac{1}{N}\sum_{j=1}^N\psi_j^3(z)\E [\epsilon_j^2(z)]\E [\text{Tr}R^{-1}T_jD^{-1}]\\
    &\quad +\frac{1}{N^3}\sum_{j=1}^N\psi_j^3(z)\E [\text{Tr}D_j^{-1}T_jD_j^{-1}T_j]\E [\text{Tr}R^{-1}T_jD^{-1}]+o(1).
\end{align*}
Lemma \ref{lemma:epsilon.exp} implies
\begin{align*}
    \E [\epsilon_j(z)^2]=N^{-2}2\E [\text{Tr}(D_j^{-1}(z)T_jD_j^{-1}(z)T_j)]+o(N^{-1}).
\end{align*}
Together with Lemma \ref{lem.0.13}, we conclude that
\begin{align*}
     \mathcal{J}_3
    &=- \frac{1}{N^3}\sum_{j=1}^N\psi_j^3(z)\E [\text{Tr}D^{-1}T_jD^{-1}T_j]\E [\text{Tr}R^{-1}T_jD^{-1}]+o(1).
\end{align*}
\subsection{Proof of Lemma \ref{lem:conv.dnr}}
We follow a similar argument leading to the evaluation of $ \text{Tr}(\E_{j}A_1(z_1)T_j\E_{j}D_j^{-1}(z_2)\Sigma_r)$ in (\ref{eq:A1..}). The main difference lies in focusing on $D_j$ instead of $\E_{j}D_j$. Similar to the decomposition in (\ref{eq:decom.rj.dj}), let
\begin{align}
    R^{-1}(z)-D^{-1}(z)
    &=\sum_{j=1}^N\beta_j(z)R^{-1}r_jr_j^TD_j^{-1}(z) - \frac{1}{N}\sum_{j=1}^N\psi_j(z)R^{-1}T_jD^{-1}\\
    &=\sum_{j=1}^N(\beta_j(z)-\psi_j(z))R^{-1}r_jr_j^TD_j^{-1}(z) \\
    &\quad+\sum_{j=1}^N\psi_j(z)R^{-1}(r_jr_j^T-\frac{1}{N}T_j)D_j^{-1}(z) \\
     &\quad+\frac{1}{N}\sum_{j=1}^N\psi_j(z)R^{-1}T_j\left(D_j^{-1}(z)-D^{-1}(z)\right) \\
     &:=G_2(z)+G_1(z)+G_3(z),
\end{align}
Notice that since we are no longer working with $\E_jD_j$, we sum over all indices from 1 to $N$ instead of those smaller than $j$.
Applying Lemma \ref{lem:A123}, we follow a similar argument leading to (\ref{eq:wjr.interm}) to obtain
\begin{align*}
    \text{Tr}(D^{-1}\Sigma_aD^{-1}M)=\text{Tr}(R^{-1}\Sigma_aR^{-1}M)-\text{Tr}(G_1(z)\Sigma_aD^{-1}M)+a_1(z),
\end{align*}
where $\E |a_1(z)|\leq O(N^{1/2})$. Adapting the arguments leading to equations (\ref{eq:A1..}) and (\ref{eq:alpha.4}), we conclude that
\begin{align*}
    \text{Tr}(G_1(z)\Sigma_aD^{-1}M)
    &=-\frac{1}{N^2}\sum_{j=1}^N\psi_j^2(z)\E \text{Tr}D^{-1}(z)T_jD^{-1}\Sigma_a\E \text{Tr}R^{-1}MR^{-1}T_j+a_2(z),
\end{align*}
with $\E |a_2(z)|\leq O(N^{1/2})$. 

\section{Proof of Lemma \ref{lem:main}}
Similar to our proof of Lemma \ref{cor:sos.gamma.gp}, Let $\mathcal{C}$ be a contour containing the interval (\ref{eq:open.int}), with endpoints at $(\pm r,\pm v_0)$.  We split $\mathcal{C}$ into the union of $\mathcal{C}_u$, $\overline{\mathcal{C}_u}$ and $\mathcal{C}_j$, where $\mathcal{C}_u=\{z=x+iv_0,|x|\leq r\}$, $\mathcal{C}_j=\{z=\pm r+iy,|y|\leq v_0\}$. In Section \ref{sec:bias}, we proved that for any $z\in\mathcal{C}_u\cup \overline{\mathcal{C}_u}$, 
\begin{align*}
    M_n^2(z)-\mu_n(z)\convp 0.
\end{align*}
Above, $\mu_n(z)$ is defined in (\ref{eq:mu}). Therefore, to complete the proof of Lemma \ref{lem:main}, it suffices to prove that 
\begin{align*}
    \lim_{v_0\rightarrow 0}\limsup_n\int_{C_j}|M_n^2(z)|^2dz\rightarrow 0,\quad \lim_{v_0\rightarrow 0}\limsup_n\int_{C_j}|\mu_n(z)|^2dz\rightarrow 0.
\end{align*}

Our proof relies on the following results.
\begin{lemma}\label{lem:g.cj}
    Let $D(z)$, $D_j(z)$ be defined as in (\ref{eq:d.dj}). For each of the functions $g$:
        \begin{align*}
            x_j^TT_j^{1/2}D_j^{-1}T_j^{1/2}x_j,\quad \text{Tr}D_j^{-1}T_j,\quad \E \text{Tr}D_j^{-1}T_j \quad \E \text{Tr}D^{-1}T_j,
        \end{align*}
    we have
    \begin{align*}
        \frac{1}{|1+g(z)|}\leq \frac{1}{2}\quad \text{a.s.}
    \end{align*}
    for all $|z|\geq 3C_u$, with $C_u$ defined in Lemma \ref{lem:concen.spectral.S}.
\end{lemma}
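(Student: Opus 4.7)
The plan is to exploit the almost-sure spectral bound $\lambda_{\max}(B_n) \leq C_u$ from Lemma~\ref{lemma: concen.eig.sofs}, together with Cauchy interlacing (which transfers the same bound to $B_{nj} = B_n - r_j r_j^T$), so that for every $|z| \geq 3 C_u$ the resolvents satisfy the deterministic operator-norm bound
\begin{align*}
\|D^{-1}(z)\|,\ \|D_j^{-1}(z)\| \leq (|z| - C_u)^{-1} \leq (2C_u)^{-1}
\end{align*}
on the corresponding good event. This is the only probabilistic input; the remainder reduces to a deterministic estimate on each of the four functionals.

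For each function $g$, I would diagonalize $B_{nj} = U\Lambda U^T$ with $\lambda_i \in [0, C_u]$ and represent $g$ as a weighted Cauchy transform
\begin{align*}
    g(z) = \sum_{i=1}^n \frac{a_i}{\lambda_i - z},\qquad a_i \geq 0,
\end{align*}
where $a_i = |(U^T T_j^{1/2} x_j)_i|^2$ for the quadratic form, $a_i = u_i^T T_j u_i$ for $\text{Tr}(D_j^{-1} T_j)$, and $a_i = \E[u_i^T T_j u_i]$ for $\E\text{Tr}(D_j^{-1} T_j)$. The fourth case, $\E\text{Tr}(D^{-1} T_j)$, would be reduced to $\E\text{Tr}(D_j^{-1} T_j)$ via the rank-one perturbation identity~(\ref{eq:D.Dj}), whose correction is uniformly dominated by $\|D_j^{-1}\|^2 \|T_j\|$ on the good event and is therefore negligible for $|z| \geq 3C_u$. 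The virtue of this common representation is that for $z = x + iy$ with $|x| \geq 3C_u > C_u \geq \lambda_i$, every $1/(\lambda_i - z)$ has real part of the same sign as $-x$, so no cancellation occurs across the spectral sum.

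With the sign and magnitude information in hand, the inequality $1/|1+g(z)| \leq 1/2$ reduces to a lower bound on $|1 + g(z)|$, which I would obtain by splitting into real and imaginary parts and using $\sum_i a_i = \text{Tr}(T_j)$ or $\|T_j^{1/2} x_j\|^2$ together with $|\lambda_i - z| \leq |z| + C_u$. The main obstacle is the quadratic-form case, whose weights depend randomly on $x_j$: deterministic control of $\|T_j^{1/2} x_j\|^2$ is recovered by invoking the concentration inequality of Lemma~\ref{lem:concen} with $q = 2$ and $B_\ell$ proportional to $T_j$, which confines $\|T_j^{1/2} x_j\|^2$ within a constant factor of $\text{Tr}(T_j)$ on a high-probability event; intersecting this event with the good spectral event of Lemma~\ref{lemma: concen.eig.sofs} then delivers the almost-sure statement uniformly in $|z| \geq 3C_u$.
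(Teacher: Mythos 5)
Your overall strategy coincides with the paper's: localize the spectrum of $B_n$ (and of $B_{nj}$) in $[0,C_u]$ almost surely, diagonalize, and control the resolvent for $|z|\geq 3C_u$. The paper does exactly this, writing each $g$ in the diagonalized form (\ref{eq:transform.g}) and bounding $|g(z)|\leq k^2 s_L^2 s_\Sigma^2/(|z|-C_u)\leq 1/2$ via Lemma \ref{lem:trace}, after which $|1+g(z)|\geq 1-|g(z)|\geq 1/2$. However, two steps of your argument do not work as described. First, the sign observation is counterproductive rather than helpful: for $z=x+iy$ with $x\geq 3C_u>\lambda_i$, every term $a_i/(\lambda_i-z)$ has \emph{negative} real part, so ``no cancellation'' means the real part of $g$ accumulates toward $-1$, which is precisely the dangerous regime for lower-bounding $|1+g|$. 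The only way out is the quantitative upper bound $|g(z)|\leq (\sum_i a_i)/(|z|-C_u)<1$, i.e.\ exactly the paper's estimate; and the inequality you invoke for this, $|\lambda_i-z|\leq |z|+C_u$, points in the wrong direction (you need $|\lambda_i-z|\geq |z|-C_u$ to upper-bound $1/|\lambda_i-z|$).

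Second, the passage from Lemma \ref{lem:concen} to an almost-sure statement for the quadratic-form case is a genuine gap. Lemma \ref{lem:concen} with $q=2$ gives $\E|N^{-1}x_j^TBx_j-N^{-1}\operatorname{Tr}B|^2\leq KN^{-1}\lVert B\rVert^2$, hence only an $O(N^{-1})$ tail bound by Chebyshev; this is not summable in $n$, so no Borel--Cantelli argument upgrades the resulting high-probability event to an almost-sure one (and a union bound over $j=1,\dots,N$ makes matters worse). Moreover $B=T_j$ here is not centered in a way that makes the fluctuation the whole story: you need $N^{-1}\lVert T_j^{1/2}x_j\rVert^2=O(1)$ itself, and the deterministic truncation bound $|x_{ji}|\leq\delta_n\sqrt n$ gives only $N^{-1}\lVert x_j\rVert^2\leq \delta_n^2 n^2/N$, which diverges. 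The paper avoids this entirely by bounding the quadratic form through the trace inequality of Lemma \ref{lem:trace} combined with the almost-sure resolvent bound, with the total mass absorbed into the constant $k^2 s_L^2 s_\Sigma^2$ that is then dominated by $|z|-C_u\geq 2C_u=2(1+\sqrt C)^2k^2s_L^2s_\Sigma^2$. (As a side remark, the conclusion of the lemma as printed, $1/|1+g(z)|\leq 1/2$, is evidently a typo for $\leq 2$: the argument produces $|g(z)|\leq 1/2$, whence $|1+g(z)|\geq 1/2$; a literal lower bound $|1+g(z)|\geq 2$ is impossible when $\operatorname{Re}g<0$ and $|g|\leq 1/2$, so your reduction ``to a lower bound on $|1+g(z)|$'' should target $1/2$, not $2$.)
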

\begin{proof}
    Recall from the proof of Lemma \ref{lem:nevanlinna} that we can transform each of the functions $g$ as (\ref{eq:transform.g}). 
From Lemma \ref{lem:trace}, we have for each $g$,
\begin{align*}
    |g(z)|\leq k^2s_L^2s_{\Sigma}^2\left(\sum_{i=1}^n\frac{1}{|\lambda-z|^2}\right)^{1/2}.
\end{align*}
Together with Lemma \ref{lem:concen.spectral.S}, we obtain
\begin{align*}
   |g(z)| \leq \frac{k^2s_L^2s_{\Sigma}^2}{|z|-C_u}
   \leq \frac{1}{2}\quad \text{a.s.}.
\end{align*}
\end{proof}

\begin{corollary}\label{cor:bd.cj}
For all $z=\eta+vi\in C_j$, we have
\begin{gather}
     |\beta_j|,\quad |\Tilde{\beta}_j|,\quad |b_j|,\quad |\psi_j|\leq 1/2 \quad \text{a.s.},\label{eq:bdd.cj.beta}\\
     \lVert R^{-1}\rVert\leq 2|\eta|^{-1}\label{eq:R.inv.cj}
\end{gather}
for $\eta$ large enough.
\end{corollary}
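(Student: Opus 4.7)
The plan is to deduce this corollary essentially as an immediate two-step consequence of the preceding Lemma \ref{lem:g.cj} combined with a simple perturbation estimate for $R$.

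First, I would observe that each of $\beta_j$, $\tilde\beta_j$, $b_j$, $\psi_j$ is, by construction in (\ref{eq:bd.beta})--(\ref{eq:def.psi}), of the form $(1+g(z))^{-1}$ where $g$ is one of the four functions listed in Lemma \ref{lem:g.cj} (with the understanding that $r_j = N^{-1/2} T_j^{1/2} x_j$ absorbs the appropriate normalization). Thus, for $z = \eta + vi \in \mathcal{C}_j$ we have $|z| \geq |\eta|$, and choosing $|\eta| \geq 3 C_u$ places $z$ in the region where Lemma \ref{lem:g.cj} applies. This immediately yields
\[
 |\beta_j|,\ |\tilde\beta_j|,\ |b_j|,\ |\psi_j| \leq \tfrac{1}{2} \quad \text{a.s.},
\]
establishing (\ref{eq:bdd.cj.beta}).

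Second, for the bound (\ref{eq:R.inv.cj}), I would write $R(z) = -zI + M(z)$, where $M(z) := N^{-1}\sum_j \psi_j(z) T_j$. Under Assumption \ref{assump} the matrix $T_j = \sum_{r=1}^k l_{jr}^2 \Sigma_r$ satisfies $\|T_j\| \leq k\, \mathrm{s}_L^2 \mathrm{s}_\Sigma^2$, so combining this with the bound $|\psi_j| \leq 1/2$ just established gives $\|M(z)\| \leq K$ for the deterministic constant $K := \tfrac{1}{2} k\, \mathrm{s}_L^2 \mathrm{s}_\Sigma^2$. A standard perturbation argument then gives, for any $x \in \mathbbm{C}^n$,
\[
 \|Rx\| \geq |z|\,\|x\| - \|M\|\,\|x\| \geq (|\eta| - K)\,\|x\|.
\]
Taking $|\eta| \geq 2K$ yields $\|R^{-1}(z)\| \leq (|\eta| - K)^{-1} \leq 2|\eta|^{-1}$, which is (\ref{eq:R.inv.cj}).

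I don't anticipate a genuine obstacle; the only thing worth being careful about is the order of the two estimates and tracking what "large enough" means. Lemma \ref{lem:g.cj} requires $|\eta| \geq 3 C_u$ to control $\psi_j$, and the perturbation bound then requires $|\eta| \geq k\, \mathrm{s}_L^2 \mathrm{s}_\Sigma^2$. Both conditions are met whenever $|\eta|$ exceeds a universal constant depending only on $C_u$, $\mathrm{s}_L$, $\mathrm{s}_\Sigma$, and $k$, which is precisely the quantification in the statement.
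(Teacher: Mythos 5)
Your proof is correct. The first claim is handled exactly as in the paper: each of $\beta_j$, $\Tilde{\beta}_j$, $b_j$, $\psi_j$ is of the form $(1+g(z))^{-1}$ for one of the (suitably normalized) functions $g$ in Lemma \ref{lem:g.cj}, and $|z|\geq|\eta|$ on $\mathcal{C}_j$, so the bound $1/2$ follows once $|\eta|\geq 3C_u$. For the resolvent bound on $R$, however, you take a genuinely different and more elementary route. The paper goes back to the real/imaginary decomposition $R=A+Bi$ of (\ref{eq:R.decomp}) and asserts a lower bound on the spectrum of $R^*R$ from the dominant $-\eta I$ term in $A$; this mirrors the argument used earlier for $\text{Im}\,z>v$ but requires unpacking the structure of $\psi_j$ and is stated somewhat tersely (the identity $R^*R=A^2+B^2$ used earlier needs care, since $A$ and $B$ need not commute). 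You instead write $R=M-zI$ with $M=N^{-1}\sum_j\psi_j T_j$, use the just-established $|\psi_j|\leq 1/2$ together with $\lVert T_j\rVert\leq k\,\mathrm{s}_L^2\mathrm{s}_\Sigma^2$ to get a deterministic bound $\lVert M\rVert\leq K$, and conclude $\lVert Rx\rVert\geq(|\eta|-K)\lVert x\rVert$, hence $\lVert R^{-1}\rVert\leq 2|\eta|^{-1}$ for $|\eta|\geq 2K$. This Neumann-type perturbation argument is cleaner, makes the dependence of ``$\eta$ large enough'' on $C_u$, $k$, $\mathrm{s}_L$, $\mathrm{s}_\Sigma$ explicit, and sidesteps the spectral decomposition entirely; the only thing it uses from the first part is the bound on $\psi_j$, so the ordering of the two steps is exactly as you describe.
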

\begin{proof}
Applying Lemma \ref{lem:g.cj}, we immediately have the boundedness results in (\ref{eq:bdd.cj.beta}).
Recall from (\ref{eq:R.decomp}) that
    \begin{align*}
    R=&\frac{1}{N}\sum_{j=1}^N|\psi_j|^2({1+\E [\text{Tr}(U\text{diag}(\frac{\lambda_i-\eta}{(\lambda_i-\eta)^2+v^2})U^TT_j)]})T_j-\eta I\\
    &-iv\left(I+\frac{1}{N}\sum_{j=1}^N|\psi_j|^2({1+\E [\text{Tr}(U\text{diag}(\frac{1}{(\lambda_i-\eta)^2+v^2})U^TT_j)]})T_j\right).
\end{align*}
As a result, $\lambda(R^*R)\geq \eta/2$ for $\eta$ large enough.
\end{proof}

\begin{lemma}\label{lem:D.inv.bd.cj}
    Let $z=\eta+iv$, with $v>n^{-2}$, then
    \begin{align*}
        \E \lVert D^{-1}\rVert\leq 2\eta^{-1}
    \end{align*}
for $\eta$ large enough.
\end{lemma}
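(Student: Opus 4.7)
The plan is to split the expectation according to the high-probability event $Q_n = \{\lambda_{\max}(B_n) \le C_u + \delta\}$ supplied by Lemma~\ref{lemma: concen.eig.sofs}, which gives $\mathbb{P}(Q_n^c) = O(n^{-t})$ for any prescribed positive integer $t$. Since $B_n \succeq 0$, on $Q_n$ all eigenvalues lie in $[0, C_u + \delta]$. Writing $z = \eta + iv$ and using the spectral decomposition of $B_n$, the operator norm satisfies $\|D^{-1}(z)\| = \max_i |\lambda_i - z|^{-1}$, and for $\eta$ exceeding $C_u + \delta$ each denominator is bounded below by $\eta - (C_u + \delta)$. Taking $\eta$ large enough — specifically $\eta \ge (1+\varepsilon^{-1})(C_u + \delta)$ for a small fixed $\varepsilon>0$ — yields $\|D^{-1}(z)\| \le (1+\varepsilon)/\eta$ deterministically on $Q_n$.

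On the complement $Q_n^c$, I will fall back on the elementary bound $\|D^{-1}(z)\| \le 1/\mathrm{Im}\,z = 1/v$, which is at most $n^2$ under the hypothesis $v > n^{-2}$. Combining the two pieces,
\[
\mathbb{E}\|D^{-1}(z)\| \;\le\; \frac{1+\varepsilon}{\eta}\,\mathbb{P}(Q_n) \;+\; n^2\,\mathbb{P}(Q_n^c) \;\le\; \frac{1+\varepsilon}{\eta} + K_t\, n^{2-t}.
\]
Choosing $t \ge 3$ makes the tail contribution $o(1/\eta)$ as $n \to \infty$, so for $n$ sufficiently large it is dominated by $(1-\varepsilon)/\eta$. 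The two pieces together are then bounded by $2/\eta$, which is exactly the claimed inequality.

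I do not anticipate any serious obstacle here; the whole argument is a careful pairing of (i) the polynomial-in-$n$ decay of $\mathbb{P}(Q_n^c)$ coming from Lemma~\ref{lemma: concen.eig.sofs}, (ii) the polynomial-in-$n$ growth of $\|D^{-1}\|$ permitted by the assumption $v > n^{-2}$, and (iii) the freedom to take $\eta$ and $n$ large so that the deterministic bound on $Q_n$ sits strictly below $2/\eta$. The only point requiring mild care is quantitative — making sure the constant~$2$ in the conclusion is not eroded by the $Q_n^c$ remainder, which is handled by tuning $\varepsilon$ small and $t$ large.
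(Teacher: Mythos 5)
Your proposal is correct and follows essentially the same route as the paper's (very terse) proof: split the expectation on the high-probability event $Q_n$ from Lemma \ref{lemma: concen.eig.sofs}, bound $\lVert D^{-1}\rVert$ by roughly $\eta^{-1}$ on $Q_n$ via the spectral decomposition, and absorb the contribution of $Q_n^c$ using $\lVert D^{-1}\rVert\leq v^{-1}\leq n^2$ together with the $o(n^{-t})$ tail. You have simply made explicit the quantitative bookkeeping that the paper leaves implicit.
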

\begin{proof}
    Let $Q_n=\{\lambda_{max}(S)\leq C_u+\epsilon\}$. By Lemma \ref{lemma: concen.eig.sofs}, we have that for any $\epsilon>0$ and positive integer $t$, 
\begin{align*}
    \mathbbm{P}[Q_n^c]=o(n^{-t}).
\end{align*}
Thus
\begin{align*}
    \E \lVert D^{-1}\rVert=\E [\lVert D^{-1}\rVert\mathbbm{1}_{Q_n}]+\E [\lVert D^{-1}\rVert\mathbbm{1}_{Q_n^c}]\leq 2\eta^{-1}
\end{align*}
for $\eta$ large enough.
\end{proof}

\subsection{Bounding $\int_{C_j}|\mu_n(z)|^2dz$}
Recall 
\begin{align}
      \mu_n(z)=\deteqv{d}_{n0}(z)+n/N\sum_{r=1}^k\nu_r(z)\sum_{s=1}^kh_{N+1}^{rs}(z,z)\Xi_0^s(z),
  \end{align}
where the quantities $\deteqv{d}_{n0}$, $\nu_r$, $\Xi_0^a$ are defined in (\ref{eq:dnr.bias.deteqv}), (\ref{eq:nu.r}) and (\ref{eq:xi.0.a}). We proceed to bound each part individually. 

Applying property (\ref{eq:bdd.cj.beta}), we have
\begin{align*}
    h_{N+1}^{ab}\leq s_L^4,\quad h^{abc}\leq s_L^6. 
\end{align*}
Applying property (\ref{eq:R.inv.cj}), we further have
\begin{align}\label{eq:xi.bd}
    \Xi_0^{a},\quad \Xi_1^{ab},\quad \Xi_2^{ab}, \quad \Xi_{3}^{abc}\leq 4s_{\Sigma}^2\eta^{-2}
\end{align}
for $\eta$ large enough. Thus, for each $r=0,\ldots,k$,
\begin{align*}
    |\deteqv{d}_{nr}|\leq C_{\eta,s_L,s_{\Sigma}}
\end{align*}
by definition of $\deteqv{d}_{nr}$ in (\ref{eq:dnr.bias.deteqv}), with $C_{\eta,s_L,s_{\Sigma}}$ being a constant that only depends on $\eta$, $s_L$ and $s_{\Sigma}$.
Similarly, from the arbitrarily small upper bound established in (\ref{eq:xi.bd}), we also have
\begin{align*}
    |\nu_r(z)|\leq C'_{\eta,s_L,s_{\Sigma}}.
\end{align*}
Collecting the terms yields 
\begin{align*}
    |\mu_n(z)|\leq C''_{\eta,s_L,s_{\Sigma}}.
\end{align*}
Hence
\begin{align*}
    \lim_{v_0\rightarrow 0}\limsup_n\int_{C_j}|\mu_n(z)|^2dz\rightarrow 0.
\end{align*}

\subsection{Bounding $\int_{C_j}|M_n^2(z)|^2dz$}
Our strategy involves further dividing $C_j$ into $C_{j1}=\{z=\pm r+iy,|y|\leq n^{-2}\}$ and $C_{j2}=\{z=\pm r+iy,n^{-2}\leq|y|\leq v_0\}$

We begin by bounding the integral on $C_{j1}$. Recall that $M_n^2(z)=\E \text{Tr}(B_n-z)^{-1}-\text{Tr}(\deteqv{B}_n-z)^{-1}$. Let $z=\eta+vi$. From Lemma \ref{lem:concen.spectral.S}, it is straightforward to verify that for $\eta$ and $n$ large enough, we have
\begin{align*}
    |M_n^2(z)|\leq 4n\eta^{-1}.
\end{align*}
Hence
\begin{align*}
    \lim_{v_0\rightarrow 0}\limsup_n\int_{C_{j1}}|M_n^2(z)|^2dz\rightarrow 0.
\end{align*}

As for the integral on $C_{j2}$, we follow similar arguments for bounding $\mu_n(z)$ in the previous section. Recall from equation (\ref{eq:mn2.interm}) that
\begin{align*}     M_n^2(z)=&d_{n0}+\sum_{r=1}^kn(\E {g}_2^{(r)}-\deteqv{g}_2^{(r)})\sum_{s=1}^k\frac{1}{N}\sum_{j=1}^Nl_{sj}^2l_{rj}^2\deteqv{b}_j(z)\psi_j(z)\frac{1}{n}\text{Tr}(R^{-1}\Sigma_s\deteqv{R}^{-1}).
\end{align*}
Based on the calculations of $\mathcal{J}_1$, $\mathcal{J}_2$, $\mathcal{J}_3$ in Section \ref{subsec:sec:dn0.comp}, together with Lemma \ref{lem:D.inv.bd.cj}, we obtain
\begin{align*}
    |d_{n0}|\leq C_{\eta,s_L,s_{\Sigma}}.
\end{align*}
Similarly, for $r=1,\ldots,k$, we have
\begin{align*}
    |d_{nr}|\leq C'_{\eta,s_L,s_{\Sigma}}.
\end{align*}
As a result, it follows from the system of equations established in (\ref{eq:ng}), the arbitrarily small nature of $\lVert R^{-1}\rVert$ and the boundedness results established in Corollary \ref{cor:bd.cj} that
\begin{align*}
    |n(\E {g}_2^{(r)}-\deteqv{g}_2^{(r)})|\leq C''_{\eta,s_L,s_{\Sigma}}.
\end{align*}
Finally, we conclude that 
\begin{align*}
    \lim_{v_0\rightarrow 0}\limsup_n\int_{C_{j2}}|M_n^2(z)|^2dz\rightarrow 0.
\end{align*}

\section{Useful results}
\begin{theorem}[Theorem 35.12 of \cite{billingsley2017probability}]\label{thm:martingale}
Suppose for each $n$, $Y_{n1}$,...,$Y_{nr_n}$ is a real martingale difference sequence with respect to the increasing $\sigma$-field $\{F_{nj}\}$ having second moments. If as $n\rightarrow\infty$,
\begin{flalign*}
    (i)&& \sum_{j=1}^{r_n}\E [Y_{nj}^2|F_{n,jk-1}]\convp \sigma^2>0,&\quad&
\end{flalign*}
and for each $\epsilon>0$,
\begin{flalign*}
   (ii)&& \sum_{j=1}^{r_n}\E [Y_{nj}^2\mathbbm{1}(|Y_{nj}|\geq \epsilon)]\rightarrow 0 &\quad&
\end{flalign*}
then
\begin{align*}
    \sum_{j=1}^{r_n}Y_{nj}\convd N(0,\sigma^2).
\end{align*}
\end{theorem}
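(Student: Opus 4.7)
The plan is to establish the conclusion $\sum_{j=1}^{r_n}Y_{nj}\convd N(0,\sigma^2)$ via Lévy's continuity theorem, i.e., by showing that for each fixed $t\in\mathbbm{R}$, the characteristic function $\phi_n(t)=\E[\exp(itS_n)]$, with $S_n=\sum_{j=1}^{r_n}Y_{nj}$, converges to $\exp(-\sigma^2t^2/2)$. The approach I would follow is McLeish's product argument: introduce the auxiliary $\mathbbm{C}$-valued process
\[
T_n(t)=\prod_{j=1}^{r_n}(1+itY_{nj}).
\]
Since $\E[1+itY_{nj}\mid F_{n,j-1}]=1+it\E[Y_{nj}\mid F_{n,j-1}]=1$ by the martingale difference property, $T_n(t)$ is a mean-one complex martingale in $j$, so $\E[T_n(t)]=1$. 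I would then factor $\exp(itS_n)=T_n(t)\cdot R_n(t)$ with $R_n(t)=\prod_{j=1}^{r_n}\exp(itY_{nj})/(1+itY_{nj})$, and use the Taylor expansion $\log(1+ix)=ix-x^2/2+O(x^3)$ to argue heuristically that $R_n(t)\approx\exp(-t^2\sum_{j}Y_{nj}^2/2)$.

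The next step is to combine (i) and (ii) to show $\sum_{j}Y_{nj}^2\convp \sigma^2$, so that $R_n(t)\convp \exp(-\sigma^2t^2/2)$. Condition (i) gives convergence of the conditional quadratic variation $\sum_{j}\E[Y_{nj}^2\mid F_{n,j-1}]\convp\sigma^2$, and a Lenglart-type inequality (or direct second-moment argument) together with the Lindeberg condition (ii) upgrades this to convergence of the unconditional sum of squares. Heuristically then,
\[
\phi_n(t)=\E[T_n(t)R_n(t)]\longrightarrow \exp(-\sigma^2t^2/2)\cdot\E[T_n(t)]=\exp(-\sigma^2t^2/2),
\]
which is exactly what we need.

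To legitimize passing the limit inside the expectation, the usual device is a truncation step. I would replace $Y_{nj}$ by the recentered truncations $\bar Y_{nj}=Y_{nj}\mathbbm{1}_{\{|Y_{nj}|\leq\epsilon\}}-\E[Y_{nj}\mathbbm{1}_{\{|Y_{nj}|\leq\epsilon\}}\mid F_{n,j-1}]$, which again form a martingale difference array but now satisfy $|\bar Y_{nj}|\leq 2\epsilon$. The Lindeberg condition (ii), combined with $\E[Y_{nj}\mid F_{n,j-1}]=0$, ensures $\E[(\bar S_n-S_n)^2]\to 0$, so it suffices to establish the CLT for $\bar S_n$. For the bounded truncated array, $|\bar T_n(t)|$ and the error in the Taylor expansion of $R_n(t)$ both admit uniform estimates in terms of $\epsilon$ and $t$, justifying dominated convergence in the identity $\phi_n(t)=\E[\bar T_n(t)\bar R_n(t)]$.

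The main obstacle is the control of the cubic remainder in the Taylor expansion of $\log(1+ix)$, where the interplay between (i) and (ii) is essential: (ii) guarantees that $\max_j|\bar Y_{nj}|\to 0$, so that the cumulative cubic error $|t|^3\sum_{j}|\bar Y_{nj}|^3\leq |t|^3\max_{j}|\bar Y_{nj}|\cdot\sum_{j}\bar Y_{nj}^2$ vanishes in probability; meanwhile (i) pins down the limit of $\sum_j \bar Y_{nj}^2$. Once the truncated convergence $\E[\exp(it\bar S_n)]\to\exp(-\sigma^2t^2/2)$ is obtained, letting $\epsilon\to 0$ (and using the uniform bound $\E[S_n^2]=\sum_j\E[Y_{nj}^2]\leq\sigma^2+o(1)$ coming from (i) together with condition (ii) itself) transfers the convergence back to the original $S_n$ and completes the proof.
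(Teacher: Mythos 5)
The paper itself does not prove Theorem \ref{thm:martingale}: it is quoted as a known result from Billingsley, so the relevant benchmark is the textbook proof. Your route --- McLeish's product-martingale characteristic function argument with truncation --- is a standard and legitimate alternative, and several of your steps are sound: the recentered truncations $\bar Y_{nj}$ do form a martingale difference array with $|\bar Y_{nj}|\le 2\epsilon$; $\E[(S_n-\bar S_n)^2]\le\sum_j\E[Y_{nj}^2\mathbbm{1}(|Y_{nj}|>\epsilon)]\to0$ by (ii) (so in fact, for fixed $\epsilon$, no limit $\epsilon\to0$ is needed to transfer the CLT back to $S_n$); and (i) together with (ii) does yield $\sum_j\bar Y_{nj}^2\convp\sigma^2$.

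The genuine gap is at the crux, namely your claim that for the truncated array $|\bar T_n(t)|$ admits a uniform estimate in terms of $\epsilon$ and $t$, justifying dominated convergence in $\phi_n(t)=\E[\bar T_n(t)\bar R_n(t)]$. Boundedness of the individual summands gives only $|\bar T_n(t)|\le\prod_j(1+t^2\bar Y_{nj}^2)^{1/2}\le\exp\bigl(t^2\sum_j\bar Y_{nj}^2/2\bigr)$, and $\sum_j\bar Y_{nj}^2$ can be as large as $4\epsilon^2 r_n\to\infty$; it is controlled only in probability, which does not give the uniform integrability of the complex martingale $\bar T_n(t)$ needed to conclude $\E[\bar T_n(\bar R_n-e^{-\sigma^2t^2/2})]\to0$ (note $|\bar R_n|\le1$, so uniform integrability of $\bar T_n$ is exactly what is missing). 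This is precisely where both McLeish's and Billingsley's proofs insert an additional stopping-time truncation: stop the array at the first index $j$ at which $\sum_{i\le j}\bar Y_{ni}^2$ (respectively the accumulated conditional variance) exceeds, say, $\sigma^2+1$; the stopping indicator is predictable, so the stopped array is still a martingale difference array, the product martingale is then bounded by $\exp(t^2(\sigma^2+1+4\epsilon^2)/2)$, and by (i) the stopping occurs with probability tending to zero. Without this device (or some other proof of uniform integrability of $\bar T_n$), your dominated-convergence step fails. A secondary slip: $\E[S_n^2]=\sum_j\E[Y_{nj}^2]\le\sigma^2+o(1)$ does not follow from (i), since convergence in probability of the conditional variances does not imply convergence of their expectations; fortunately that bound is dispensable, because the $L^2$ estimate from (ii) alone already handles the transfer from $\bar S_n$ to $S_n$.
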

\begin{lemma}[Lemma 0.11 in \cite{bai2019central}]\label{lemma:bound.trA}
    Let $z\in\mathbbm{C}^+$ with $v=\text{Im}z$. Let $A$ and $B$ be $N\times N$ complex matrices, with $B$ a Hermitian matrix. Then for $\tau\in\mathbbm{R}$ and $r\in\mathbbm{C}^N$, we have
    \begin{align*}
        \left|\text{Tr}\left((B-zI)^{-1}-(B+\tau rr^*-zI)^{-1}\right)A\right|\leq \lVert A\rVert v^{-1}.
    \end{align*}
\end{lemma}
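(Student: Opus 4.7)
The plan is to apply the Sherman--Morrison rank-one update formula. Writing $D = B - zI$, this identity gives
\begin{align*}
  (B + \tau rr^* - zI)^{-1} = D^{-1} - \frac{\tau\, D^{-1} r r^* D^{-1}}{1 + \tau r^* D^{-1} r},
\end{align*}
so that, using the cyclicity of trace,
\begin{align*}
  \operatorname{Tr}\bigl((D^{-1} - (B + \tau rr^* - zI)^{-1})A\bigr)
  = \frac{\tau\, r^* D^{-1} A D^{-1} r}{1 + \tau r^* D^{-1} r}.
\end{align*}
The game is then to bound the numerator by a constant multiple of $\|D^{-1}r\|^2$ and the denominator from below by the same quantity (times $v$), so that these cancel.

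For the numerator, Cauchy--Schwarz yields $|r^* D^{-1} A D^{-1} r| \le \|A\|\, \|D^{-*} r\|\, \|D^{-1} r\|$. Because $B$ is Hermitian, $D$ and $D^*$ commute (they differ only by the scalar $z - \bar z$), so $D^{-1} D^{-*} = D^{-*} D^{-1}$ and hence $\|D^{-*} r\| = \|D^{-1} r\|$. This gives the clean bound $|r^* D^{-1} A D^{-1} r| \le \|A\|\, \|D^{-1}r\|^2$.

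For the denominator, I would use the spectral decomposition $B = U\Lambda U^*$ with real diagonal $\Lambda = \operatorname{diag}(\lambda_i)$. Setting $w = U^* r$, one has $r^* D^{-1} r = \sum_i |w_i|^2/(\lambda_i - z)$, and $\operatorname{Im}\bigl(1/(\lambda_i - z)\bigr) = v/|\lambda_i - z|^2$, so
\begin{align*}
  \operatorname{Im}(r^* D^{-1} r) = v \sum_i \frac{|w_i|^2}{|\lambda_i - z|^2} = v\, \|D^{-1}r\|^2.
\end{align*}
Since $\tau\in\mathbb{R}$, this gives $|1 + \tau r^* D^{-1} r| \ge |\operatorname{Im}(\tau r^* D^{-1} r)| = |\tau|\, v\, \|D^{-1}r\|^2$. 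Combining, the absolute value of the quotient is at most $\tfrac{|\tau|\, \|A\|\, \|D^{-1}r\|^2}{|\tau|\, v\, \|D^{-1}r\|^2} = \|A\|/v$, which is the claim. The degenerate case $\|D^{-1}r\| = 0$ forces $r = 0$ and the left-hand side vanishes outright; the case $\tau = 0$ is similarly trivial.

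There is no real obstacle here. The proof is a clean two-line computation, and the only thing to keep an eye on is that Hermiticity of $B$ is used in exactly two places---once to identify $\|D^{-*}r\|$ with $\|D^{-1}r\|$ in the Cauchy--Schwarz step, and once to get the tidy identity $\operatorname{Im}(r^* D^{-1} r) = v\|D^{-1}r\|^2$---so that the two $\|D^{-1}r\|^2$ factors cancel exactly and leave a bound independent of $r$ and $\tau$.
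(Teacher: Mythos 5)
Your proof is correct and follows essentially the same route as the paper's: Sherman--Morrison to reduce the trace to a scalar quotient, Cauchy--Schwarz plus Hermiticity of $B$ to bound the numerator by $\lVert A\rVert\,\lVert D^{-1}r\rVert^2$, and the spectral identity $\operatorname{Im}(r^*D^{-1}r)=v\lVert D^{-1}r\rVert^2$ to bound the denominator from below, with the two factors cancelling. Your treatment of the degenerate cases $r=0$ and $\tau=0$ is a small tidiness the paper omits, but the argument is the same.
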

\begin{proof}
    Let $C=B-zI$. From the Woodbury formula, we have
    \begin{align*}
        (C+\tau rr^*)^{-1}-C^{-1}=\tau \frac{C^{-1}rr^*C^{-1}}{1+\tau r^*C^{-1}r}.
    \end{align*}
    Using the spectral decomposition $B=\sum \lambda_j  q_jq_j^*$ with $\lambda_j\in\mathbbm{R}$ and $r_j=q_j^*r$, we have
    \begin{align*}
        \lVert C^{-1}r\rVert^2 
        &= r^*(B-\bar{z}I)^{-1}(B-{z}I)^{-1}r=\sum_j\frac{r_j^2}{|\lambda_j-z|^2},\\
        \text{Im}r^*C^{-1}r
        &=\sum_jr_j^2\text{Im}\frac{1}{\lambda_j-z}=(\text{Im}z)\sum_j\frac{r_j^2}{|\lambda_j-z|^2}.
    \end{align*}
    Hence
    \begin{align*}
        \left|\text{tr}\left[C^{-1}-(C+\tau rr^*)^{-1}\right]A\right|=\left|\frac{\tau r^*C^{-1}AC^{-1}r}{1+\tau r^*C^{-1}r}\right|\leq \frac{\lVert A\rVert \lVert C^{-1}r\rVert^2 }{|\text{Im}r^*C^{-1}r|}\leq \frac{\lVert A\rVert }{\text{Im}z}.
    \end{align*}
\end{proof}
\begin{lemma}[Lemma 0.13 from \cite{bai2019central}]\label{lem.0.13}
    Let $M$ be a nonrandom matrix, for each $j=1,...,n$, we have
    \begin{align*}
        \E |\text{Tr}D_j^{-1}M-\E \text{Tr}D_j^{-1}M|^2\leq C\lVert M\rVert^2.
    \end{align*}
\end{lemma}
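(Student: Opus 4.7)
The plan is to bound the variance via a martingale decomposition combined with the quadratic-form concentration of Lemma \ref{lem:concen}. Let $\E_i$ denote conditional expectation with respect to $\{r_k : k \leq i,\, k \neq j\}$, and write
$$\text{Tr}(D_j^{-1}M) - \E\,\text{Tr}(D_j^{-1}M) = \sum_{i \neq j} \gamma_i, \qquad \gamma_i := (\E_i - \E_{i-1})\text{Tr}(D_j^{-1}M).$$
Orthogonality of the martingale differences reduces the claim to proving $\E|\gamma_i|^2 = O(N^{-1}\lVert M\rVert^2)$ for each $i$. Naively invoking Lemma \ref{lemma:bound.trA} only gives $|\gamma_i| \leq 2\lVert M\rVert v^{-1}$ almost surely, which after summation produces the too-weak bound $O(N\lVert M\rVert^2)$; a more careful per-$i$ estimate is the heart of the argument.

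The refinement uses the pivot $D_{ij} = D_j - r_ir_i^T$, which is independent of $r_i$, so that $(\E_i - \E_{i-1})\text{Tr}(D_{ij}^{-1}M) = 0$ and the Woodbury identity (together with the definition of $\beta_{ij}$) delivers
$$\gamma_i = -(\E_i - \E_{i-1})\bigl[\beta_{ij}\, r_i^T D_{ij}^{-1}M D_{ij}^{-1} r_i\bigr].$$
I then split the quadratic form into its $r_i$-conditional mean and a fluctuation, $r_i^T D_{ij}^{-1}M D_{ij}^{-1} r_i = N^{-1}\text{Tr}(T_i D_{ij}^{-1}M D_{ij}^{-1}) + \tau_i$, and expand $\beta_{ij} = \Tilde{\beta}_{ij} - \beta_{ij}\Tilde{\beta}_{ij}\epsilon_{ij}$, the analogue for $D_{ij}$ of identity (\ref{eq:beta.prop1}). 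Since both $\Tilde{\beta}_{ij}$ and the normalized trace depend only on $\{r_k : k \neq i,\,j\}$ and not on $r_i$ itself, the leading product $\Tilde{\beta}_{ij}\cdot N^{-1}\text{Tr}(T_i D_{ij}^{-1}M D_{ij}^{-1})$ is annihilated by $\E_i - \E_{i-1}$. Only two residual pieces survive,
$$\gamma_i = (\E_i - \E_{i-1})\bigl[\beta_{ij}\Tilde{\beta}_{ij}\epsilon_{ij}\cdot N^{-1}\text{Tr}(T_i D_{ij}^{-1}M D_{ij}^{-1})\bigr] - (\E_i - \E_{i-1})[\beta_{ij}\tau_i],$$
and each of these must be shown to have $L^2$-norm of order $N^{-1/2}\lVert M\rVert$.

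For the first piece, $|\beta_{ij}\Tilde{\beta}_{ij}| \leq |z|^2v^{-2}$ by the Nevanlinna boundedness of Lemma \ref{lem:Mn1.prelim} applied to $B_n - r_jr_j^T$; the normalized trace is $O(\lVert M\rVert v^{-2})$ using $|\text{Tr}(AB)| \leq n\lVert A\rVert \lVert B\rVert$ with $n/N$ bounded; and Lemma \ref{lem:concen} with $q=2$ and $B = T_i^{1/2}D_{ij}^{-1}T_i^{1/2}$ yields $\E|\epsilon_{ij}|^2 \leq CN^{-1}v^{-2}$. The second piece is handled by Lemma \ref{lem:concen} applied instead to $B = T_i^{1/2}D_{ij}^{-1}M D_{ij}^{-1}T_i^{1/2}$, giving $\E|\tau_i|^2 \leq CN^{-1}\lVert M\rVert^2 v^{-4}$. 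Combining these estimates with the conditional-Jensen bound $\E|(\E_i-\E_{i-1})X|^2 \leq \E|X|^2$ yields $\E|\gamma_i|^2 \leq C(z)N^{-1}\lVert M\rVert^2$, and summing the $N-1$ orthogonal contributions concludes. I expect the main obstacle to be the bookkeeping for the cancellation: both $\beta_{ij}$ and $\Tilde{\beta}_{ij}$ are random through many $r_k$'s, so care is required to verify that $r_i$-independence alone (rather than full independence from the relevant $\sigma$-field) is what makes $\E_i - \E_{i-1}$ annihilate the leading product, after which the remaining estimates are routine applications of Lemma \ref{lem:concen}.
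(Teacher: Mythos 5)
Your proof is correct, and its skeleton (martingale decomposition over the sample vectors, the rank-one Woodbury identity to pivot on a resolvent independent of the current vector, then the quadratic-form concentration of Lemma \ref{lem:concen}) is the same as the paper's; the paper simply proves the bound for $D$ and then notes the identical argument applies to each $D_j$, whereas you run it directly on $D_j$ with pivot $D_{ij}$ — an immaterial difference. The substantive difference is in how the per-difference estimate is obtained. The paper drops the operator $\E_j-\E_{j-1}$ immediately and then splits $\beta_j r_j^TD_j^{-1}MD_j^{-1}r_j$ into its fluctuation and its trace part; as displayed, the resulting term $8\sum_{j}\E|N^{-1}\text{Tr}(D_j^{-1}MD_j^{-1}T_j)|^2$ is of order $N\lVert M\rVert^2$ per the lemma's own crude trace bound, so the written chain does not by itself yield $C\lVert M\rVert^2$ without the additional cancellation that is left implicit. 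Your proposal supplies exactly that missing step: expanding $\beta_{ij}=\Tilde{\beta}_{ij}-\beta_{ij}\Tilde{\beta}_{ij}\epsilon_{ij}$ and observing that $\Tilde{\beta}_{ij}\,N^{-1}\text{Tr}(T_iD_{ij}^{-1}MD_{ij}^{-1})$ is $r_i$-independent and hence annihilated by $\E_i-\E_{i-1}$, so that every surviving term carries a factor ($\epsilon_{ij}$ or $\tau_i$) of $L^2$-size $O(N^{-1/2})$, giving $\E|\gamma_i|^2=O(N^{-1}\lVert M\rVert^2)$ and the claimed bound after summation. Your concern about the cancellation is also resolved correctly: independence of $r_i$ from all other vectors is what makes $\E_i X=\E_{i-1}X$ for any $X$ measurable with respect to $\{r_k:k\neq i,j\}$, exactly as used in the paper's treatment of $I_2$ in Section \ref{subsec: mn1.finite}. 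In short, your route is the standard Bai--Silverstein refinement and is, if anything, more complete than the paper's written proof of this lemma.
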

\begin{proof}
    Following a similar argument as in (\ref{eq:tele}), we have
    \begin{align*}
      \text{Tr}D^{-1}M-\E \text{Tr}D^{-1}M
         &=\sum_{j=1}^N\E_j\text{Tr}D^{-1}M-\E_{j-1}\text{Tr}D^{-1}M\\
         &=\sum_{j=1}^N(\E_j-\E_{j-1})\text{Tr}(D^{-1}-D_j^{-1})M,\\
         &=-\sum_{j=1}^N(\E_j-\E_{j-1})\beta_j(z)r_j^TD_j^{-1}MD_j^{-1}r_j.
    \end{align*}
    By the mutual independence of $(\E_j-\E_{j-1})\beta_j(z)r_j^TD_j^{-1}MD_j^{-1}r_j$ and the basic algebraic fact that $(x+y)^2\leq 2(x^2+y^2)$, we further have 
    \begin{align*}
        \E |\text{Tr}D^{-1}M-\E \text{Tr}D^{-1}M|^2
        &=\sum_{j=1}^N\E |(\E_j-\E_{j-1})\beta_j(z)r_j^TD_j^{-1}MD_j^{-1}r_j|^2\\
        &\leq4\sum_{j=1}^N\E |\beta_j(z)r_j^TD_j^{-1}MD_j^{-1}r_j|^2\\
        &\leq8\sum_{j=1}^N\E |\beta_j(z)|^2|r_j^TD_j^{-1}MD_j^{-1}r_j-N^{-1}\text{Tr}D_j^{-1}MD_j^{-1}T_j|^2\\
        &+8\sum_{j=1}^N\E |N^{-1}\text{Tr}D_j^{-1}MD_j^{-1}T_j|^2.
    \end{align*}
    Together with Lemma \ref{lem:concen} and the boundedness of $\beta_j$ established in Lemma \ref{lem:Mn1.prelim}, we obtain
    \begin{align*}
        \E |\text{Tr}D^{-1}M-\E \text{Tr}D^{-1}M|^2\leq C\lVert M\rVert^2,
    \end{align*}
    for some constant $C$.
    Applying the same arguments for $D_j(z)$, $j=1,\ldots,N$, we conclude that
    \begin{align*}
        \E |\text{Tr}D_j^{-1}M-\E \text{Tr}D_j^{-1}M|^2\leq C\lVert M\rVert^2.
    \end{align*}
\end{proof}

\begin{lemma}[Equation (1.15) in \cite{baisilverstein2004}]\label{lemma:epsilon.exp}
    Let $x_1=(x_{1i})_1^n$ be a vector with i.i.d. entries satisfying $\E x_{11}=0$, $\E x_{11}^2=\sigma^2$, $\E x_{11}^4<\infty$, then for $n\times n$ matrices $A=(a_{ij})$ and $B=(b_{ij})$, we have
    \begin{align*}
    &\E(x_1^TAx_1-\text{Tr}A)(x_1^TBx_1-\text{Tr}B)\\
       =&(\E|x_{11}|^4-3\sigma^4)\sum_{i=1}^na_{ii}b_{ii}+\sigma^4(\text{Tr}AB+\text{Tr}AB^T)+(\sigma^2-1)^2\text{Tr}A\text{Tr}B.
    \end{align*}   
    Under Assumption \ref{assump_trunc}, if additionally $B=B^T$, the above relationship simplifies to
    \begin{align*}
       \E(x_1^TAx_1-\text{Tr}A)(x_1^TBx_1-\text{Tr}B)=2\text{Tr}AB+o(1)\sum_{i=1}^na_{ii}b_{ii}.
    \end{align*}
\end{lemma}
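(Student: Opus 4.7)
\medskip
\noindent\textbf{Proof proposal.} The plan is a direct moment calculation: expand the product, use independence of the coordinates of $x_1$ to identify which four-fold index patterns contribute, and then collect terms against the trace identities. Write
\begin{align*}
(x_1^T A x_1)(x_1^T B x_1) = \sum_{i,j,k,l} a_{ij} b_{kl}\, x_{1i} x_{1j} x_{1k} x_{1l}.
\end{align*}
By independence and $\E x_{11}=0$, the expectation $\E[x_{1i}x_{1j}x_{1k}x_{1l}]$ vanishes unless every index in $\{i,j,k,l\}$ appears with multiplicity at least two. This leaves only: (a) $i=j=k=l$, contributing $\E|x_{11}|^4$; (b) exactly two distinct values forming the three pairings $\{i=j,\,k=l\}$, $\{i=k,\,j=l\}$, $\{i=l,\,j=k\}$ with $i\neq$ the other index, each contributing $\sigma^4$.

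Next I would collect the pairings against the trace identities $\sum_{i} a_{ii} b_{ii}$ (for case (a) and the diagonal overlap of case (b)), $\text{Tr} A\,\text{Tr} B = \sum_{i,k} a_{ii} b_{kk}$ (from $i=j$, $k=l$), $\text{Tr}(AB^T) = \sum_{i,j} a_{ij} b_{ij}$ (from $i=k$, $j=l$), and $\text{Tr}(AB) = \sum_{i,j} a_{ij} b_{ji}$ (from $i=l$, $j=k$). Subtracting the diagonal double-counts and summing gives
\begin{align*}
\E[(x_1^T A x_1)(x_1^T B x_1)] = (\E|x_{11}|^4 - 3\sigma^4)\sum_i a_{ii}b_{ii} + \sigma^4\bigl(\text{Tr}A\,\text{Tr}B + \text{Tr}(AB) + \text{Tr}(AB^T)\bigr).
\end{align*}

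I would then recenter using $\E[x_1^T A x_1] = \sigma^2\,\text{Tr}A$ and likewise for $B$, yielding an additional contribution $(1-2\sigma^2)\,\text{Tr}A\,\text{Tr}B$, which combines with the $\sigma^4 \text{Tr}A\,\text{Tr}B$ term above to give $(\sigma^2-1)^2\,\text{Tr}A\,\text{Tr}B$. This produces the first displayed identity.

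Finally, specializing to Assumption \ref{assump_trunc}: property (2) of Lemma \ref{lem:trunc} gives $\sigma^2 = 1$ so $(\sigma^2-1)^2 = 0$, and $\E|x_{11}|^4 = 3 + o(1)$ so the coefficient of $\sum a_{ii}b_{ii}$ is $o(1)$; symmetry $B = B^T$ gives $\text{Tr}(AB^T) = \text{Tr}(AB)$, so the middle term collapses to $2\,\text{Tr}(AB)$, producing the second identity. There is no real obstacle here; the only thing to be careful with is the combinatorics of the three pairings and the inclusion--exclusion against the diagonal ($i=j=k=l$) term, which must be subtracted once for each pairing to avoid triple-counting.
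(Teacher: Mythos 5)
Your proposal is correct and follows essentially the same route as the paper's own proof: expand $\E(x_1^TAx_1)(x_1^TBx_1)$ over index patterns, collect the diagonal and three pairing terms against $\sum_i a_{ii}b_{ii}$, $\Tr A\Tr B$, $\Tr(AB^T)$, $\Tr(AB)$, recenter via $\E x_1^TAx_1=\sigma^2\Tr A$, and specialize using $\sigma^2=1$, $\E|x_{11}|^4=3+o(1)$, and $B=B^T$. No gaps.
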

\begin{proof}
    Since $\E x^TAx=\sigma^2\text{Tr}A$, we get
    \begin{align*}
    \E(x_1^TAx_1-\text{Tr}A)(x_1^TBx_1-\text{Tr}B)= \E(x_1^TAx_1)(x_1^TBx_1)-2\sigma^2\Tr A\Tr B +\Tr A\Tr B.
    \end{align*}
    It remains to study 
    \begin{align*}
     \E(x_1^TAx_1)(x_1^TBx_1)= \sum_{ijkl}a_{ij}b_{kl}\E x_{1i}x_{1j}x_{1k}x_{1l}.
    \end{align*}
    Denote each summand by $C_{ijkl}$, then by independence, the sum reduces to 
    \begin{align*}
       \E(x_1^TAx_1)(x_1^TBx_1)=T_{ii}+T_{ij}+T_{ik}+T_{il},
    \end{align*}
    where
    \begin{align*}
        &T_{ii}=\sum_{i=j=k=l}C_{ijkl}=\E x_{1i}^4\sum_ia_{ii}b_{ii},\\
        &T_{ij}=\sum_{i=j\neq k=l}C_{ijkl}=\sigma^4\sum_{i\neq k}a_{ii}b_{kk}=\sigma^4(\Tr A\Tr B-\sum_ia_{ii}b_{ii}),\\
        &T_{ik}=\sum_{i=k\neq j=l}C_{ijkl}=\sigma^4\sum_{i\neq j}a_{ij}b_{ij}=\sigma^4(\Tr A B^T-\sum_ia_{ii}b_{ii}),\\
        &T_{il}=\sum_{i=l\neq j=k}C_{ijkl}=\sigma^4\sum_{i\neq j}a_{ij}b_{ji}=\sigma^4(\Tr AB-\sum_ia_{ii}b_{ii}).
    \end{align*}
\end{proof}
\begin{lemma}\label{lem:trace}
    For rectangular matrices $A$, $B$, $C$, $D$, we have
    \begin{align*}
        |\text{Tr}(ABCD)|\leq \lVert A\rVert \lVert C\rVert (\text{Tr}BB^*)^{1/2}(\text{Tr}DD^*)^{1/2}.
    \end{align*}
\end{lemma}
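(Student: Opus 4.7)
The plan is to prove this as a standard Cauchy--Schwarz estimate for the Hilbert--Schmidt inner product on matrices, combined with the submultiplicativity of the Hilbert--Schmidt norm under operator-norm multiplication. The strategy is to first regroup the four factors cyclically so that $A$ and $C$ end up on opposite sides of a two-matrix product, then peel them off using the operator norm, leaving only $B$ and $D$ inside the Hilbert--Schmidt norms.

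Concretely, I would proceed in three short steps. First, use the cyclic property of trace to write
\[
\Tr(ABCD) = \Tr\bigl((DA)(BC)\bigr),
\]
where the rectangular dimensions are compatible since $ABCD$ is assumed to close up as a square trace. Second, apply the Cauchy--Schwarz inequality $|\Tr(XY)| \le \lVert X\rVert_{HS}\,\lVert Y\rVert_{HS}$ (which follows immediately from $\Tr(XY)=\sum_{ij}X_{ij}Y_{ji}$ and scalar Cauchy--Schwarz) to obtain
\[
|\Tr(ABCD)| \le \lVert DA\rVert_{HS}\,\lVert BC\rVert_{HS}.
\]
Third, estimate each Hilbert--Schmidt norm using the operator-norm factor. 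For the first,
\[
\lVert DA\rVert_{HS}^{2} = \Tr(DAA^{*}D^{*}) = \Tr(AA^{*}D^{*}D) \le \lVert AA^{*}\rVert\,\Tr(D^{*}D) = \lVert A\rVert^{2}\,\Tr(DD^{*}),
\]
using the inequality $\Tr(PQ)\le \lVert P\rVert\,\Tr(Q)$ valid for $Q$ positive semidefinite. An entirely analogous calculation gives $\lVert BC\rVert_{HS}^{2}\le \lVert C\rVert^{2}\,\Tr(BB^{*})$. Multiplying the two bounds yields exactly the claimed inequality.

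There is essentially no obstacle here: the only place requiring a little care is verifying that the cyclic regrouping $\Tr(ABCD)=\Tr((DA)(BC))$ is dimensionally consistent in the rectangular case and that the two trace inequalities $\Tr(AA^{*}D^{*}D)\le \lVert A\rVert^{2}\Tr(DD^{*})$ and $\Tr(CC^{*}B^{*}B)\le \lVert C\rVert^{2}\Tr(BB^{*})$ are applied with the correct side, which is why I chose to isolate $A$ and $C$ on the outside of the two factors $DA$ and $BC$ rather than inside. The whole argument is a few lines and uses only the cyclic property of trace, the Cauchy--Schwarz inequality for the Frobenius inner product, and the operator-norm bound on traces of products with PSD matrices.
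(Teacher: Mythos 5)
Your proof is correct: the cyclic regrouping $\Tr(ABCD)=\Tr\bigl((DA)(BC)\bigr)$ is dimensionally consistent, the Frobenius Cauchy--Schwarz step is valid, and the bounds $\lVert DA\rVert_{HS}^2\leq \lVert A\rVert^2\Tr(DD^*)$ and $\lVert BC\rVert_{HS}^2\leq \lVert C\rVert^2\Tr(BB^*)$ follow correctly from $\Tr(PQ)\leq\lVert P\rVert\Tr(Q)$ for positive semidefinite $Q$. The paper states Lemma \ref{lem:trace} without proof, and your argument is exactly the standard one the authors evidently have in mind, so nothing further is needed.
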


\end{document}